\newtheorem{theorem}{Theorem}[section]
\newtheorem{lemma}{Lemma}[section]
\newtheorem{proposition}{Proposition}[section]
\newtheorem{corollary}{Corollary}[section]
\theoremstyle{definition}
\newtheorem{remark}{Remark}[section]
\numberwithin{equation}{section}
\newcommand\blfootnote[1]{\begingroup\renewcommand\thefootnote{}\footnote{#1}\addtocounter{footnote}{-1}\endgroup}
\begin{document}

\title{
{\bf\Large An application of coincidence degree theory to cyclic feedback type systems associated with nonlinear differential operators}
\footnote{Work performed under the auspices of the
Grup\-po Na\-zio\-na\-le per l'Anali\-si Ma\-te\-ma\-ti\-ca, la Pro\-ba\-bi\-li\-t\`{a} e le lo\-ro
Appli\-ca\-zio\-ni (GNAMPA) of the Isti\-tu\-to Na\-zio\-na\-le di Al\-ta Ma\-te\-ma\-ti\-ca (INdAM).
Guglielmo Feltrin is partially supported by the GNAMPA Project 2016
``Problemi differenziali non lineari: esistenza, molteplicit\`{a} e propriet\`{a} qualitative delle soluzioni''.}}
\author{{\bf\large Guglielmo Feltrin}
\vspace{1mm}\\
{\it\small SISSA - International School for Advanced Studies}\\
{\it\small via Bonomea 265}, {\it\small 34136 Trieste, Italy}\\
{\it\small e-mail: guglielmo.feltrin@sissa.it}\vspace{1mm}\\
\vspace{1mm}\\
{\bf\large Fabio Zanolin}
\vspace{1mm}\\
{\it\small Department of Mathematics, Computer Science and Physics, University of Udine}\\
{\it\small via delle Scienze 206},
{\it\small 33100 Udine, Italy}\\
{\it\small e-mail: fabio.zanolin@uniud.it}\vspace{1mm}}

\date{}

\maketitle

\vspace{-2mm}

\begin{center}
\normalsize \textit{Dedicated to Professor Jean Mawhin for his coming 75th birthday}
\end{center}

\vspace{5mm}

\begin{abstract}
\noindent
Using Mawhin's coincidence degree theory, we obtain some new continuation theorems
which are designed to have as a natural application the study of the periodic
problem for cyclic feedback type systems. We also discuss some examples of
vector ordinary differential equations with a $\phi$-Laplacian operator where
our results can be applied.
\blfootnote{\textit{AMS Subject Classification:} 34C25, 47H11, 47J05, 47N20.}
\blfootnote{\textit{Keywords:} cyclic feedback systems, coincidence degree, periodic solutions, continuation theorems, $\phi$-Laplacian operators.}
\end{abstract}

\section{Introduction}\label{section-1}

The aim of this paper is to apply Mawhin's coincidence degree theory in the study of the periodic boundary value problem
for some classes of first order differential systems of cyclic feedback type. From this point of view,
our work continues the research initiated in \cite{CaQiZa-99} and is also partially inspired by the results in \cite{MaMa-98}
on periodic ODE systems with a $\phi$-Laplacian differential operator.

Roughly speaking, by a ``cyclic system'' we usually mean a first order system of ordinary differential equations
where the time evolution of the $j$-th component $y_{j}(t)$ mainly depends upon the pair $(y_{j-1}(t),y_{j}(t))$.
Therefore, the components are ordered in a cyclic manner, so that we consider $n\equiv 0$ for a system of $n$ variables.
Accordingly, such systems usually take a form as
\begin{equation}\label{eq-1.1}
y'_{j} = g_{j}(y_{j-1},y_{j}), \quad j=1,\ldots, n,
\end{equation}
where we agree to interpret $y_{0}$ as $y_{n}$ (cf.~\cite{MPSm-90}).
More general models consider also the case where $g_{j}= g_{j}(y_{j-1},y_{j},y_{j+1})$, which is a typical case
of a system describing nearest neighbor interactions. The term ``feedback'' usually refers to a monotonicity assumption
of the form $\partial g_{j}(y_{j-1},y_{j})/\partial y_{j-1} > 0$ or $\partial g_{j}(y_{j-1},y_{j})/\partial y_{j-1} < 0$,
which reflects the fact that the variable $y_{j-1}$ has a positive or negative effect on the growth of the $j$-th
variable $y_{j}$. These features explain the reason why first order differential systems
with a cyclic feedback structure arise in several
different contexts, both theoretic and applied. As observed in \cite{MPSm-90},
such systems naturally appear in the investigation of biological models (for instance, cellular control systems)
as well as in the study of delay-differential equations or reaction-diffusion equations (after a discretization procedure).

Up to a relabeling of the variables in equation \eqref{eq-1.1}, namely setting $x_{i} := y_{n+1-i}$,
we get an equivalent system of the form
\begin{equation}\label{eq-1.2}
x'_{i} = f_{i}(x_{i},x_{i+1}), \quad i=1,\ldots,n,
\end{equation}
with $f_{i} = g_{j}$ for $i+j=n+1$. In view of the applications that we are going to present in this article,
it will be more convenient for us to consider cyclic systems of the form \eqref{eq-1.2}.
For many concrete examples, in some of the equations of system \eqref{eq-1.2}
there is no dependence of $f_{i}$ upon the $i$-th variable or such dependence can be neglected.
Two typical examples are the following.

Consider a $n$-th order differential equation of the form
\begin{equation}\label{eq-1.3}
x^{(n)} + h(t,x,x',\ldots,x^{(n-1)}) = 0,
\end{equation}
which can be written as
\begin{equation}\label{eq-1.4}
\begin{cases}
\, x'_{i} = x_{i+1}, \quad i= 1,\ldots,n-1, \\
\, x'_{n} = -h(t,x_{1},\ldots,x_{n}).
\end{cases}
\end{equation}
In such a case $f_{i}(x_{i},x_{i+1}) = x_{i+1}$ for $i=1,\ldots,n-1$, so that the first $n-1$ equations
in the cyclic system \eqref{eq-1.2} are strongly simplified. On the other hand,
this example shows that there are cases in which the last equation in \eqref{eq-1.2}
may be more complicated than $x'_{n} = f_{n}(x_{n},x_{1})$.

In some ODE models for population dynamics it is rather common to encounter \textit{Kolmogorov systems} of the form
\begin{equation}\label{eq-1.5}
u'_{i} = u_{i} K_{i}(u_{i+1}), \quad i= 1,\ldots,n\equiv0.
\end{equation}
A typical two-dimensional case is given by the \textit{Lotka-Volterra predator-prey equation}
\begin{equation*}
\begin{cases}
\, u' = u(a - b v) \\
\, v' = v (-c + du).
\end{cases}
\end{equation*}
Since we are looking for positive solutions, we can perform the change of variable $x_{i}(t)= \log(u_{i}(t))$
and transform \eqref{eq-1.5} to the equivalent cyclic feedback system
\begin{equation*}
x'_{i} =  K_{i}(\exp(x_{i+1})), \quad i=1,\ldots,n\equiv 0,
\end{equation*}
which is of the form of \eqref{eq-1.2} with $f_{i}$ independent on the variable $x_{i}$.
This latter model suggests the interest to deal also with the non-autonomous counterpart of system \eqref{eq-1.2},
by assuming an explicit dependence of some of the coefficients on the time variable. This situation naturally
occurs in the study of some Kolmogorov systems, like the Lotka-Volterra one, in which one can consider a seasonal
dependence on the coefficients.

\medskip

In view of the above remarks, we plan to investigate a class of cyclic feedback systems related to \eqref{eq-1.2}
which have a simpler form in the first $n-1$ components but, on the other hand, allow to consider a
more general dependence for the last equation, in order to apply our results to equations of the form \eqref{eq-1.4}
as well. With this respect, we study a system of the form
\begin{equation*}
\begin{cases}
\, x_{1}' = g_{1}(x_{2}) \\
\, x_{2}' = g_{2}(x_{3}) \\
\, \quad \vdots \\
\, x_{n-1}' = g_{n-1}(x_{n}) \\
\, x_{n}' = h(t,x_{1},\ldots,x_{n}),
\end{cases}
\leqno{(\mathscr{C})}
\end{equation*}
where throughout the paper we suppose that $g_{1},\ldots, g_{n-1}$ are continuous functions and $h$
is $T$-periodic in the $t$-variable and satisfies the Carath\'{e}odory assumptions.

A powerful topological tool to produce existence and multiplicity results of periodic solutions
is Mawhin's coincidence degree theory, which allows to apply a topological degree type approach
to problems which can be written as an abstract operator equation of the form $Lx = Nx$,
where $L$ is a linear \textit{non-invertible} operator and $N$ is a nonlinear one acting on a Banach space $X$.
In order to present the next results, we take
\begin{equation*}
X := \mathcal{C}_{T} := \bigl{\{} x\in \mathcal{C}(\mathopen{[}0,T\mathclose{]},\mathbb{R}^{m}) \colon x(0) = x(T)\bigr{\}},
\end{equation*}
with the standard $\sup$-norm $\|\cdot\|_{\infty}$.

In the frame of coincidence degree theory, the main existence result for the periodic problem
\begin{equation*}
\begin{cases}
\, x' = F(t,x)\\
\, x(0) = x(T),
\end{cases}
\leqno{(\mathscr{P})}
\end{equation*}
where $F \colon \mathopen{[}0,T\mathclose{]} \times \mathbb{R}^{m} \to \mathbb{R}^{m}$ is a Carath\'{e}odory vector field,
is \textit{Mawhin's continuation theorem} (cf.~\cite[Th\'{e}or\`{e}me~2]{Ma-69} or \cite[Theorem~4.1]{Ma-93}),
which reads as follows (we denote by ``$\text{\rm deg}_B$'' the Brouwer degree).

\begin{theorem}\label{th-1.1}
Let $\Omega\subseteq X$ be an open bounded set and suppose that:
\begin{itemize}
\item
for each $\lambda\in \mathopen{]}0,1\mathclose{[}$ there is no solution of problem
\begin{equation*}
\begin{cases}
\, x' = \lambda F(t,x) \\
\, x(0) = x(T)
\end{cases}
\end{equation*}
with $x\in \partial\Omega$;
\item
the averaged map $F^{\#} \colon z\mapsto \tfrac{1}{T}\int_{0}^{T} F(t,z)~\!dt$ has no zeros on $\partial\Omega \cap \mathbb{R}^{m}$ and
$\text{\rm deg}_{B}(F^{\#},\Omega\cap\mathbb{R}^{m},0)\neq0$.
\end{itemize}
Then, problem $(\mathscr{P})$ has a solution in $\overline{\Omega}$.
\end{theorem}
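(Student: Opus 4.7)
The plan is to place the periodic problem $(\mathscr{P})$ into the coincidence degree framework. Take $X=\mathcal{C}_{T}$ and $Z:=L^{1}(\mathopen{[}0,T\mathclose{]},\mathbb{R}^{m})$, and let $L\colon \mathrm{dom}(L)\subseteq X \to Z$ be defined by $Lx:=x'$, with domain the $T$-periodic absolutely continuous functions. Standard facts give that $L$ is Fredholm of index zero, with $\ker L \cong \mathbb{R}^{m}$ (the constant functions) and $\mathrm{Im}\, L = \{y\in Z : \int_{0}^{T} y(t)\,dt = 0\}$. I take as projectors the averaging maps $P\colon X\to \ker L$ and $Q\colon Z\to Z/\mathrm{Im}\, L$, both given by $u\mapsto \frac{1}{T}\int_{0}^{T} u(t)\,dt$. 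The Nemytskii operator $Nx(t) := F(t,x(t))$ is $L$-compact on bounded subsets of $X$ by the Carath\'{e}odory hypothesis, so that $(\mathscr{P})$ is the coincidence equation $Lx = Nx$ and the coincidence degree $D_{L}(L-N,\Omega)$ is well defined as soon as $Lx\neq Nx$ on $\partial\Omega$; this latter condition may be assumed without loss of generality, for otherwise the theorem is already proved.

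The heart of the argument is the homotopy
\begin{equation*}
Lx \,=\, \lambda Nx + (1-\lambda)\,QNx, \qquad \lambda\in\mathopen{[}0,1\mathclose{]}.
\end{equation*}
Applying $Q$ to both sides and using $QL=0$ together with $Q^{2}=Q$ yields, for every $\lambda$,
\begin{equation*}
0 \,=\, \lambda\, QNx + (1-\lambda)\,QNx \,=\, QNx,
\end{equation*}
i.e.\ $\int_{0}^{T} F(t,x(t))\,dt = 0$, and the homotopic equation reduces to $x' = \lambda F(t,x)$ subject to periodicity. For $\lambda\in\mathopen{]}0,1\mathclose{[}$ the first hypothesis rules out solutions on $\partial\Omega$; at $\lambda=1$ the same holds by the running assumption on the original equation; at $\lambda=0$ one has $x'\equiv 0$, whence $x\equiv z\in\mathbb{R}^{m}$, and the integral constraint becomes $F^{\#}(z)=0$, which the second hypothesis forbids on $\partial\Omega\cap\mathbb{R}^{m}$.

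By homotopy invariance of the coincidence degree,
\begin{equation*}
D_{L}(L-N,\Omega) \,=\, D_{L}(L-QN,\Omega),
\end{equation*}
and the right-hand side reduces, by the definition of $D_{L}$ when the perturbation takes values in a copy of $\ker L$ (identified with $\mathbb{R}^{m}$ via $Q$), to $\pm\,\text{\rm deg}_{B}(F^{\#},\Omega\cap\mathbb{R}^{m},0)$, which is nonzero by assumption. The existence property of the coincidence degree then furnishes some $x\in\overline{\Omega}$ with $Lx=Nx$, i.e.\ a solution of $(\mathscr{P})$.

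The main obstacle here is essentially bookkeeping rather than conceptual: one must carefully justify the $L$-compactness of $N$ from the Carath\'{e}odory hypothesis, verify that the $Q$-projection step genuinely forces $\int_{0}^{T} F(t,x(t))\,dt=0$ along the homotopy (so that the first hypothesis applies in its stated form), and track signs and normalization conventions so that the reduction from $D_{L}(L-QN,\Omega)$ to the Brouwer degree of $F^{\#}$ on $\Omega\cap\mathbb{R}^{m}$ preserves the nonvanishing.
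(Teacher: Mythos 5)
Your proof is correct and is essentially the classical argument for Mawhin's continuation theorem: the paper does not reprove Theorem~\ref{th-1.1} (it cites \cite{Ma-69,Ma-93}), but your homotopy $Lx=\lambda Nx+(1-\lambda)QNx$ followed by the reduction to $\text{\rm deg}_{B}(F^{\#},\Omega\cap\mathbb{R}^{m},0)$ is exactly the standard proof, and indeed it is the same device the paper itself abstracts in Theorem~\ref{th-2.1} via the auxiliary system $(\mathscr{A}_{\vartheta})$ with last component $\vartheta M_{n}u+(1-\vartheta)Q_{n}M_{n}u$. The only points left implicit --- the $L$-complete continuity of $N$ under the Carath\'{e}odory hypothesis and the sign $(-1)^{m}$ in the reduction formula, which does not affect nonvanishing --- are routine and correctly flagged by you as bookkeeping.
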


A second continuation theorem was proposed in \cite{CaMaZa-92} and extended to delay-differential equations
(with a different proof) in \cite{BaMa-91}. It concerns the case in which the homotopic parameter $\lambda$
is used to modify the original system to an autonomous one. More precisely, we suppose that
there exists a Carath\'{e}odory vector field
$\mathscr{F}=\mathscr{F}(t,x,\lambda)\colon \mathopen{[}0,T\mathclose{]}\times\mathbb{R}^{m}\times \mathopen{[}0,1\mathclose{]} \to \mathbb{R}^{m}$
such that
\begin{equation*}
\mathscr{F}(t,x,0) = F_{0}(x), \qquad \mathscr{F}(t,x,1) = F(t,x).
\end{equation*}
The corresponding existence result can be stated as
follows (cf.~\cite[Theorem~2]{CaMaZa-92}).

\begin{theorem}\label{th-1.2}
Let $\Omega\subseteq X$ be an open bounded set and suppose that:
\begin{itemize}
\item
for each $\lambda\in\mathopen{[}0,1\mathclose{[}$ there is no solution of problem
\begin{equation*}
\begin{cases}
\, x' = \mathscr{F}(t,x,\lambda)\\
\, x(0) = x(T)
\end{cases}
\end{equation*}
with $x\in \partial\Omega$;
\item
$\text{\rm deg}_B(F_{0},\Omega\cap\mathbb{R}^{m},0)\not=0$.
\end{itemize}
Then, problem $(\mathscr{P})$ has a solution in $\overline{\Omega}$.
\end{theorem}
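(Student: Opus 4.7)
The plan is to work directly within Mawhin's coincidence degree framework, which is the natural setting for this kind of homotopy argument. Let $Z := L^{1}(\mathopen{[}0,T\mathclose{]},\mathbb{R}^{m})$, and define the linear operator $L\colon \mathrm{dom}\,L \subseteq X \to Z$ by $Lx := x'$ on the subspace of absolutely continuous $T$-periodic functions. This is a Fredholm operator of index zero, with $\ker L$ the space of constant functions (identified with $\mathbb{R}^{m}$) and $\mathrm{Im}\,L = \{z \in Z \colon \int_{0}^{T} z(t)\,dt = 0\}$. For $\lambda \in \mathopen{[}0,1\mathclose{]}$, let $N_{\lambda}\colon X \to Z$ be the Nemytskii-type operator $(N_{\lambda}x)(t) := \mathscr{F}(t,x(t),\lambda)$. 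Solutions of the $\lambda$-periodic problem correspond exactly to coincidences $Lx = N_{\lambda}x$, and standard arguments show that $\{N_{\lambda}\}_{\lambda\in[0,1]}$ is an $L$-compact homotopy on $\overline{\Omega}\times\mathopen{[}0,1\mathclose{]}$.

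I would then split into two cases. If there exists $x \in \partial\Omega$ with $Lx = N_{1}x$, such an $x$ is already a solution of $(\mathscr{P})$ lying in $\overline{\Omega}$ and we are done. Otherwise, combining this with the first hypothesis shows $Lx \neq N_{\lambda}x$ for every $x \in \partial\Omega$ and every $\lambda \in \mathopen{[}0,1\mathclose{]}$, so the homotopy invariance of the coincidence degree applies and gives
\begin{equation*}
D_{L}(L-N_{1},\Omega) \;=\; D_{L}(L-N_{0},\Omega).
\end{equation*}

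To evaluate the right-hand side, observe that $N_{0}$ is autonomous, since $(N_{0}x)(t) = F_{0}(x(t))$. Mawhin's reduction formula for the coincidence degree of autonomous perturbations of $L$ (see \cite{Ma-93}) then yields
\begin{equation*}
D_{L}(L-N_{0},\Omega) \;=\; \pm\,\mathrm{deg}_{B}(F_{0},\Omega\cap\mathbb{R}^{m},0),
\end{equation*}
after identifying $\ker L$ with $\mathbb{R}^{m}$ (note that the averaged map $F_{0}^{\#}$ coincides with $F_{0}$ itself in the autonomous case, so the nonvanishing of $F_{0}$ on $\partial\Omega \cap \mathbb{R}^{m}$ needed to define the Brouwer degree follows from the no-solutions hypothesis at $\lambda=0$ applied to constant functions). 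By the second assumption this is nonzero, hence $D_{L}(L-N_{1},\Omega) \neq 0$, and the existence property of the coincidence degree produces a zero of $L - N_{1}$ in $\Omega$, i.e.\ a solution of $(\mathscr{P})$ in $\overline{\Omega}$.

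The only real subtlety is the invocation of the reduction formula at $\lambda=0$; everything else is the routine machinery of coincidence degree together with a standard ``either/or'' trick to upgrade the open interval $\mathopen{[}0,1\mathclose{[}$ in the hypothesis to the closed interval $\mathopen{[}0,1\mathclose{]}$ required for the homotopy. Since the reduction formula is by now classical, the proof can essentially be assembled by quoting the appropriate results from \cite{Ma-93}.
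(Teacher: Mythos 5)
The skeleton of your argument --- the either/or dichotomy at $\lambda=1$, homotopy invariance of the coincidence degree over $\mathopen{[}0,1\mathclose{]}$, and the evaluation of the degree at $\lambda=0$ --- is the right one, and it is essentially how this theorem is established in \cite{CaMaZa-92}, to which the paper defers. The gap is in the final step. The identity
\begin{equation*}
D_{L}(L-N_{0},\Omega) \;=\; (-1)^{m}\,\text{\rm deg}_{B}(F_{0},\Omega\cap\mathbb{R}^{m},0)
\end{equation*}
is \emph{not} an instance of the reduction formula (Lemma~\ref{red-for} here, or its analogue in \cite{Ma-93}). The reduction formula applies when the relevant fixed-point operator maps into a finite-dimensional subspace; but $\Phi_{N_{0}} = P + JQN_{0} + K(Id_{Z}-Q)N_{0}$ does not map $\Omega$ into $\ker L$, because the autonomous system $x'=F_{0}(x)$ may possess nonconstant $T$-periodic solutions inside $\Omega$, so the coincidence set at $\lambda=0$ is not confined to the constants. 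Nor can you reduce to $\ker L$ via the usual auxiliary homotopy $Lx=\vartheta N_{0}x$: the hypotheses give no control on the solutions of $x'=\vartheta F_{0}(x)$ on $\partial\Omega$ for $\vartheta\in\mathopen{]}0,1\mathclose{[}$ (after a time rescaling these correspond to $\vartheta T$-periodic orbits of $F_{0}$, which may well meet $\partial\Omega$).

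The correct ingredient is the theorem on the coincidence degree of autonomous equations, recalled in the paper as Theorem~\ref{th-CMZ}: under the sole assumption that $x'=F_{0}(x)$ has no $T$-periodic solution on $\partial\Omega$, one has $D_{L}(L-N_{0},\Omega)=(-1)^{m}\,\text{\rm deg}_{B}(F_{0},\Omega\cap\mathbb{R}^{m},0)$. This is a genuinely deep result: its proof in \cite{CaMaZa-92} relies on an approximation argument based on the Kupka--Smale theorem, and the proof in \cite{BaMa-91} uses the degree for $S^{1}$-equivariant maps. Your parenthetical observation that the $\lambda=0$ hypothesis applied to constant functions excludes zeros of $F_{0}$ on $\partial\Omega\cap\mathbb{R}^{m}$ is correct, but it only guarantees that the Brouwer degree is \emph{defined}, not that it computes the coincidence degree. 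If you replace the appeal to ``the reduction formula'' by a citation of Theorem~\ref{th-CMZ}, the proof becomes correct; as written, the ``only real subtlety'' you acknowledge is in fact the entire content of the theorem, and treating it as routine reduction-formula machinery is where the argument fails.
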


Both these results extend to higher order differential systems of the form \eqref{eq-1.3}. In particular,
both the results and especially Theorem~\ref{th-1.1} have found a great number of applications to
the $T$-periodic problem associated with the vector second order differential equation
\begin{equation*}
u'' + g(t,u,u') = 0,
\end{equation*}
where $g\colon \mathopen{[}0,T\mathclose{]}\times\mathbb{R}^{d}\times \mathbb{R}^{d} \to \mathbb{R}^{d}$ is a Carath\'{e}odory function.

\medskip

The study of ordinary and partial differential equations involving
nonlinear differential operators (like the
$p$-Laplacian, the curvature or the Minkowski operators), started in the mid-twentieth century,
has shown a tremendous growth in the last decades.
Applications of topological degree methods to these equations strongly motivated the
search of new topological tools, such as the continuation theorems for strongly nonlinear operators
(see, for instance, \cite{FuNe-73} for some pioneering works in this direction).
For the periodic boundary value problem associated with non-autonomous ODEs,
Man\'{a}sevich and Mawhin developed in \cite{MaMa-98} new continuation theorems
for the second order vector nonlinear equation
\begin{equation}\label{eq-phi}
\bigl{(}\phi(u')\bigr{)}' + g(t,u,u') = 0.
\end{equation}
New applications were also obtained by the same authors in \cite{MaMa-98T, MaMa-00}
as well as by Mawhin in \cite{Ma-01}.
The two main continuation theorems in \cite{MaMa-98} extend Theorem~\ref{th-1.1} and Theorem~\ref{th-1.2},
respectively, to the above periodic problem, considering, instead of the linear differential operator
$u\mapsto -u''$, the strongly nonlinear operator $u\mapsto - (\phi(u'))'$.
The approach in \cite{MaMa-98} requires that $\phi \colon \mathbb{R}^{d}\to \mathbb{R}^{d}$ is a
homeomorphism with $\phi(0) = 0$ satisfying some additional technical growth conditions
(compare with $(H1)$ and $(H2)$ in Remark~\ref{rem-3.2}).
These continuation theorems concern, respectively, the study of the homotopic equations
\begin{equation}\label{eq-1.6}
\bigl{(}\phi(u')\bigr{)}' + \lambda g(t,u,u') = 0, \quad \lambda\in\mathopen{]}0,1\mathclose{[},
\end{equation}
(in analogy to Theorem~\ref{th-1.1}) or
\begin{equation*}
\bigl{(}\phi(u')\bigr{)}' + \tilde{g}(t,u,u',\lambda) = 0, \quad \lambda\in\mathopen{[}0,1\mathclose{[},
\end{equation*}
with
\begin{equation*}
\tilde{g}(t,u,v,1) = g(t,u,v) \quad \text{ and } \quad\tilde{g}(t,u,v,0) = g_{0}(u,v)
\end{equation*}
(in analogy to Theorem~\ref{th-1.2}).
As far as we know, it seems that the problem whether the technical conditions $(H1)$ and $(H2)$
considered in \cite{MaMa-98,MaMa-98T,MaMa-00,Ma-01}
are necessary or can be removed has not yet been completely solved.
Recently, a different point of view has been considered by Lu and Lu in \cite{LuLu-14}
where, for the mean curvature operator equation, the authors have applied
Mawhin's continuation Theorem~\ref{th-1.1} directly to the first order system
\begin{equation}\label{eq-1.7}
\begin{cases}
\, x_{1}' = \phi^{-1}(x_{2}) \\
\, x_{2}' = -g(t,x_{1},\phi^{-1}(x_{2})).
\end{cases}
\end{equation}
To be more precise, we have to remark that in the equation considered in \cite{LuLu-14} the function $g$
does not depend on $u'$ and therefore the treatment can be simplified.
Clearly, the application of Theorem~\ref{th-1.1} to system \eqref{eq-1.7} involves the study of the parameter-dependent system
\begin{equation*}
\begin{cases}
\, x_{1}' = \lambda \phi^{-1}(x_{2}) \\
\, x_{2}' =  - \lambda g(t,x_{1},\phi^{-1}(x_{2})),
\end{cases}
\quad \lambda\in\mathopen{]}0,1\mathclose{[},
\end{equation*}
which, in turns, corresponds to equation
\begin{equation*}
\bigl{(}\phi(u'/\lambda)\bigr{)}' + \lambda g(t,u,u'/\lambda) = 0, \quad \lambda\in\mathopen{]}0,1\mathclose{[},
\end{equation*}
which looks different from \eqref{eq-1.6} and, apparently, more complicated.

If we write equation \eqref{eq-1.6} as a first order system in $\mathbb{R}^{2d}$, the natural choice would be that of
\begin{equation}\label{eq-1.8}
\begin{cases}
\, x_{1}' = \phi^{-1}(x_{2}) \\
\, x_{2}' =  - \lambda g(t,x_{1},\phi^{-1}(x_{2})), \quad \lambda\in\mathopen{]}0,1\mathclose{[}.
\end{cases}
\end{equation}
The advantage in dealing with such a system is that we \textit{only require} that $\phi$ is an homeomorphism (without
the need of other technical conditions on $\phi$, as considered in \cite{MaMa-98}). On the other hand, Theorem~\ref{th-1.1}
does not apply directly to \eqref{eq-1.8} and this may represent a motivation to try to extend the classical
Mawhin's continuation theorem to a form in which the homotopic parameter $\lambda$ appears only on some components
of the differential system. A first aim of the present paper is to pursue this line of research and, indeed, we
will provide a version of Theorem~\ref{th-1.1} which is suitable for applications to cyclic feedback systems of the
form $(\mathscr{C})$, via an homotopy of the form \eqref{eq-1.8}, when  applied to \eqref{eq-1.7}.
Our proposal for a new continuation theorem is in any case within the setting of Mawhin's coincidence degree theory
and it will be presented as a general theorem for operator equations of coincidence type that
mimics at the abstract level some typical properties of the cyclic systems.

With this respect, the plan of the paper is the following. In Section~\ref{section-2} we  present an application of the theory of
coincidence degree in the setting of a system of operator equations, namely as
coincidence equations involving operators defined in product spaces. We assume the reader familiar with the basics
of Mawhin's coincidence degree, as presented in some classical works like \cite{GaMa-77, Ma-79, Ma-93}.
In any case, when necessary, we shall recall some crucial properties. The key ingredient in our proofs is the
\textit{reduction formula}, a basic tool also in the original applications of the theory (see \cite{Ma-69}
and also \cite{Ma-08,Ma-13} for some recent developments), which allows to relate a Leray-Schauder type degree
in a normed space with a Brouwer degree in a finite-dimensional space. Accordingly, our main results in
Section~\ref{section-2} are Theorem~\ref{th-2.1}, where we perform an abstract homotopy of the form \eqref{eq-1.8},
and the subsequent Lemma~\ref{lem-2.4}, where we provide a precise formula for
the computation of the degree. Such results have an immediate application to the periodic problem
and therefore in Section~\ref{section-3} we give some existence theorems of continuation type for system $(\mathscr{C})$
which are analogous to Theorem~\ref{th-1.1} (see Theorem~\ref{th-cycl1} and Theorem~\ref{th-cycl2}). In the same section
we produce, as a consequence of our main results, a continuation theorem for $\phi$-Laplacian differential systems of the form
\eqref{eq-phi} which involves the study of \eqref{eq-1.6}. Our contribution for this kind of equations is
Theorem~\ref{th-mama1} which is
exactly Man\'{a}sevich-Mawhin theorem \cite[Theorem~3.1]{MaMa-98},
with the only difference that with our approach we can avoid some technical
conditions on $\phi$ which were assumed in \cite{MaMa-98}. We also provide an example of a general type of
higher dimensional $\phi$-map where our result can be applied. In Section~\ref{section-4} we discuss a second type
of continuation theorems which are essentially based on Theorem~\ref{th-1.2}. More precisely, we consider the case
when the admissible homotopy transforms a non-autonomous system of the form $(\mathscr{C})$ into an autonomous one
where the coincidence degree can be computed using the theorems in \cite{BaMa-91} and \cite{CaMaZa-92}
(these auxiliary results are recalled in a final appendix together with a more general version suitable for our applications).
Again our purpose is to show that, when a given system allows an
equivalent representation in the cyclic feedback form, some continuation theorems can be reformulated in a very
effective fashion, thus avoiding some additional technical conditions. Applications are given again to
$\phi$-Laplacian differential systems of the form \eqref{eq-phi} and
our contribution Theorem~\ref{th-mama2} is precisely Man\'{a}sevich-Mawhin theorem \cite[Theorem~4.1]{MaMa-98}
(without extra assumptions on the $\phi$-operator). Next, in
Section~\ref{section-5} we reconsider for a broader class of differential operators Hartman-Knobloch theorem, recently extended by Mawhin in \cite{Ma-00} and by Mawhin and Ure\~{n}a in
\cite{MaUr-02} to $p$-Laplacian systems.
Finally, in Section~\ref{section-6} we briefly discuss some possible extensions to operators which are not defined on the whole space.

\medskip

We conclude this introductory section presenting a few notation used in the present paper.
In the $N$-dimensional real Euclidean space $\mathbb{R}^{N}$ we denote by $\langle \cdot,\cdot \rangle$ the standard inner product and
by $\|\cdot\|_{\mathbb{R}^N}$ the corresponding norm. When no confusion may occur we shall also use the symbol $|\cdot|$ as
a simplified notation for the norm. If we consider a homeomorphism $\phi \colon \mathcal{A} \to \mathcal{B}$, we always implicitly assume that
$\phi(\mathcal{A}) = \mathcal{B}$.
Thus, in particular, for a homeomorphism $\phi \colon \mathbb{R}^{N} \to \mathbb{R}^{N}$, we suppose that
$\phi(\mathbb{R}^{N}) = \mathbb{R}^{N}$.
In any fixed (finite or infinite dimensional) normed space, we denote by $B(x_{0},r)$ (respectively, $B[x_0,r]$)
the open (respectively, closed) ball of center a point $x_{0}$ and radius $r > 0$.
We denote by ``$\text{\rm deg}_{B}$'' the finite-dimensional Brouwer degree and by ``$\text{\rm deg}_{LS}$'' the Leray-Schauder degree
in the context of locally compact operators on arbitrary open not necessarily bounded sets (cf.~\cite{Nu-85,Nu-93} for a precise definition).
Finally, we denote by ``$D_{L}$'' the coincidence degree extended to locally compact operators (cf.~\cite[Appendix~A]{FeZa-pp2015}).

\section{Coincidence degree theory in product spaces}\label{section-2}

Throughout the section, when not otherwise specified, $i$ is a generic index from $1$ to $n$.

For $i=1,\ldots,n$, let $X_{i},Z_{i}$ be real normed linear spaces and let
\begin{equation*}
L_{i} \colon \text{\rm dom}\,L_{i} (\subseteq X_{i}) \to Z_{i}
\end{equation*}
be a linear Fredholm mapping of index zero, i.e.~$\text{\rm Im}\,L_{i}$ is a closed subspace of $Z_{i}$ and
$\text{\rm dim}(\ker L_{i}) = \text{\rm codim}(\text{\rm Im}\,L_{i})$ are finite.
We denote by $\ker L_{i} = L_{i}^{-1}(0)\subseteq X_{i}$ the kernel of $L_{i}$, by $\text{\rm Im}\,L_{i}\subseteq Z_{i}$ the image of $L_{i}$
and by $\text{\rm coker}\,L_{i} = Z_{i}/\text{\rm Im}\,L_{i}$ the quotient space of $Z_{i}$ under the equivalence
relation $w_{1} \sim w_{2}$ if and only if $w_{1}-w_{2} \in \text{\rm Im}\,L_{i}$.
Thus $\text{\rm coker}\,L_{i}$ is a complementary subspace of $\text{\rm Im}\,L_{i}$ in $Z_{i}$.

From basic results of linear functional analysis, due to the fact that $L_{i}$ is a Fredholm mapping,
there exist linear continuous projections
\begin{equation*}
P_{i} \colon X_{i} \to \ker L_{i}, \qquad Q_{i} \colon Z_{i} \to \text{\rm coker}\,L_{i}
\end{equation*}
so that
\begin{equation*}
X_{i} = \ker L_{i} \oplus \ker P_{i}, \qquad Z_{i} = \text{\rm Im}\,L_{i} \oplus \text{\rm Im}\,Q_{i}.
\end{equation*}
We denote by
\begin{equation*}
K_{i} \colon \text{\rm Im}\,L_{i} \to \text{\rm dom}\,L_{i} \cap \ker P_{i}
\end{equation*}
the right inverse of $L_{i}$, i.e.~$L_{i} K_{i}(v) = v$ for each $v\in \text{\rm Im}\,L_{i}$.
Since $\ker L_{i}$ and $\text{\rm coker}\,L_{i}$ are finite-dimensional vector spaces of the same dimension,
once an orientation on both spaces is fixed,
we choose a linear orientation-preserving isomorphism $J_{i} \colon \text{\rm coker}\,L_{i} \to \ker L_{i}$.

\medskip

Let us consider the product spaces
\begin{equation*}
X:=\prod_{i=1}^{n} X_{i}, \qquad Z:= \prod_{i=1}^{n} Z_{i},
\end{equation*}
with the usual norms.

Setting $\text{\rm dom}\,L:=\prod_{i=1}^{n} \text{\rm dom}\,L_{i}$, we define $L \colon \text{\rm dom}\,L (\subseteq X) \to Z$ as
\begin{equation*}
L(u):=(L_{1}(u_{1}),\ldots,L_{n}(u_{n})), \quad u=(u_{1},\ldots,u_{n})\in \text{\rm dom}\,L, \quad \text{with } u_{i}\in X_{i}.
\end{equation*}
It is obvious to verify that $L$ is a linear Fredholm mapping of index zero.
Next, we observe that
\begin{equation*}
\ker L = L^{-1}(0) = \prod_{i=1}^{n} \ker L_{i} \subseteq X \quad \text{ and } \quad \text{\rm Im}\,L = \prod_{i=1}^{n} \text{\rm Im}\,L_{i} \subseteq Z
\end{equation*}
are the kernel of $L$ and the image of $L$, respectively.
Finally, we define the map $K \colon \text{\rm Im}\,L \to \text{\rm dom}\,L \cap \prod_{i=1}^{n}\ker P_{i}$ as
\begin{equation*}
K(v):=(K_{1}(v_{1}),\ldots,K_{n}(v_{n})), \quad v=(v_{1},\ldots,v_{n})\in \text{\rm Im}\,L, \quad \text{with } v_{i}\in Z_{i}.
\end{equation*}
It is easy to check that $K$ is the right inverse of $L$.

Let also define $P \colon X \to \ker L$, $P(u):=(P_{1}(u_{1}),\ldots,P_{n}(u_{n}))$, for $u=(u_{1},\ldots,u_{n})\in X$ (with $u_{i}\in X_{i}$),
$Q \colon Z \to \text{\rm coker}\,L$, $Q(v):=(Q_{1}(v_{1}),\ldots,Q_{n}(v_{n}))$, for $v=(v_{1},\ldots,v_{n})\in Z$ (with $v_{i}\in Z_{i}$),
and $J \colon \text{\rm coker}\,L \to \ker L$, $J(v):=(J_{1}(v_{1}),\ldots,J_{n}(v_{n}))$, for $v=(v_{1},\ldots,v_{n})\in  \text{\rm coker}\,L$ (with $v_{i}\in Z_{i}$).

\medskip

Let
\begin{equation*}
N \colon \text{\rm dom}\,N (\subseteq X) \to Z
\end{equation*}
be a nonlinear \textit{$L$-completely continuous} operator, namely $N$ and $K(Id_{Z}-Q)N$ are continuous,
and also
$QN(B)$ and $K(Id_{Z}-Q)N(B)$ are relatively compact sets, for each bounded set $B\subseteq \text{\rm dom}\,N$.
For example, $N$ is $L$-completely continuous when $N$ is continuous, maps bounded sets to bounded sets
and $K$ is a compact linear operator.

We further define $N_{i} \colon \text{\rm dom}\,N \to Z_{i}$ as
\begin{equation*}
N_{i}(u) := (\pi^{Z}_{i}\circ N) (u), \quad u\in \text{\rm dom}\,N,
\end{equation*}
where $\pi^{Z}_{i} \colon Z \to Z_{i}$ is the standard projection.

In the sequel, to simplify the notation, we will write $Lu$ and
$Nu$ in place of $L(u)$ and $N(u)$, respectively. The same
convention will be used also for other operators.

\medskip

Now we consider the \textit{coincidence equation}
\begin{equation*}
Lu = Nu,\quad u\in \text{\rm dom}\,L\cap \text{\rm dom}\,N,
\end{equation*}
which can be equivalently written as a system
\begin{equation}\label{coinc-eq}
\begin{cases}
\, L_{i}u_{i} = N_{i}(u_{1},\ldots,u_{n}),\quad u=(u_{1},\ldots,u_{n})\in \text{\rm dom}\,L\cap\text{\rm dom}\,N,\\
\, i=1,\ldots,n.
\end{cases}
\end{equation}

From Mawhin's coincidence degree theory, one can see that system \eqref{coinc-eq} is equivalent to the fixed point problem
\begin{equation*}
u = \Phi(u), \quad u\in \text{\rm dom}\,N,
\end{equation*}
where $\Phi = \Phi_{N} \colon \text{\rm dom}\,N \to X$ is defined as
\begin{equation}\label{eq-Phi}
\Phi(u):= Pu + JQNu + K(Id_{Z}-Q)Nu, \quad u\in \text{\rm dom}\,N.
\end{equation}
Hence $\Phi$ is of the form $\Phi=(\Phi_{1},\ldots,\Phi_{n})$, with $\Phi_{i} \colon \text{\rm dom}\,N \to X_{i}$ given by
\begin{equation*}
\Phi_{i}(u):= P_{i}u_{i} + J_{i}Q_{i}N_{i}u + K_{i}(Id_{Z_{i}}-Q_{i})N_{i}u, \quad u=(u_{1},\ldots,u_{n})\in \text{\rm dom}\,N.
\end{equation*}
Notice that, under the above assumptions, $\Phi \colon \text{\rm dom}\,N \to X$ is a completely continuous operator.

\medskip

As a first step, we state the classical homotopic invariance property of Mawhin's coincidence degree.
We present the theory in a slightly simplified version than the more general one
developed in \cite{Nu-85,Nu-93} for locally compact operators (see Remark~\ref{rem-2.1}
for a more general statement).

We recall that the coincidence degree $D_{L}(L-N,\Omega)$ of $L$ and $N$ in $\Omega$ is defined as $\text{\rm deg}_{LS}(Id_{X}-\Phi,\Omega,0)$,
for $\Phi = \Phi_{N}$ as in \eqref{eq-Phi}.

\begin{lemma}[Homotopic invariance]\label{lemma-inv}
Let $L$ and $N$ be as above and let $\Omega\subseteq \text{\rm dom}\,N$ be an open (possibly unbounded) set.
Suppose that there exists an $L$-completely continuous map $\tilde{N} \colon \Omega\times\mathopen{[}0,1\mathclose{]} \to Z$ such that
\begin{equation*}
\tilde{N}(u,0)=\hat{N}(u), \quad \tilde{N}(u,1)=N(u), \quad \forall \, u\in \Omega,
\end{equation*}
where $\hat{N} \colon \Omega \to Z$.
Moreover, suppose that the set
\begin{equation*}
\mathcal{S}:=\bigcup_{\vartheta\in\mathopen{[}0,1\mathclose{]}}\bigl{\{}u\in \Omega \cap \text{\rm dom}\,L \colon Lu=\tilde{N}(u,\vartheta) \bigr{\}}
\end{equation*}
is a compact subset of $\Omega$.
Then, the map
\begin{equation*}
\vartheta \mapsto D_{L}(L-\tilde{N}(\cdot,\vartheta),\Omega)
\end{equation*}
is well-defined and constant on $\mathopen{[}0,1\mathclose{]}$. In particular, it holds that
\begin{equation*}
D_{L}(L-N,\Omega) = D_{L}(L-\hat{N},\Omega).
\end{equation*}
\end{lemma}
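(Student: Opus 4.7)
The plan is to reduce the statement to the homotopy invariance property of the Leray--Schauder degree (in its locally compact, possibly unbounded domain form of Nussbaum~\cite{Nu-85,Nu-93}) through the standard passage from coincidence equations to fixed point equations encoded by the operator $\Phi$ in \eqref{eq-Phi}.

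First, define a one-parameter family of operators $\tilde{\Phi}\colon \Omega \times \mathopen{[}0,1\mathclose{]} \to X$ by
\begin{equation*}
\tilde{\Phi}(u,\vartheta) := Pu + JQ\tilde{N}(u,\vartheta) + K(Id_{Z}-Q)\tilde{N}(u,\vartheta),
\end{equation*}
so that, by the very construction recalled before the statement, $\tilde{\Phi}(\cdot,0) = \Phi_{\hat{N}}$, $\tilde{\Phi}(\cdot,1) = \Phi_{N}$, and more generally $\tilde{\Phi}(\cdot,\vartheta) = \Phi_{\tilde{N}(\cdot,\vartheta)}$. Thus by definition
\begin{equation*}
D_{L}(L-\tilde{N}(\cdot,\vartheta),\Omega) = \text{\rm deg}_{LS}(Id_{X}-\tilde{\Phi}(\cdot,\vartheta),\Omega,0).
\end{equation*}
The first task is then to check that the assignment $(u,\vartheta) \mapsto \tilde{\Phi}(u,\vartheta)$ is a completely continuous homotopy on $\Omega \times \mathopen{[}0,1\mathclose{]}$: continuity is immediate from continuity of $\tilde{N}$ together with continuity of the linear projections and of the right inverse $K$, and local compactness follows from the $L$-complete continuity assumption on $\tilde{N}$, exactly as in the single-parameter case.

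Second, and this is the crux of the reduction, we identify the coincidence set with the fixed point set of the homotopy: for $u\in \Omega\cap\text{\rm dom}\,L$ and $\vartheta\in\mathopen{[}0,1\mathclose{]}$, the identity $u = \tilde{\Phi}(u,\vartheta)$ holds if and only if $Lu = \tilde{N}(u,\vartheta)$; this is the standard equivalence underlying coincidence degree and uses only that $L$ restricted to $\ker P \cap \text{\rm dom}\,L$ is inverted by $K$ and that $JQ$ is an isomorphism onto $\ker L$. Consequently, the set of $(u,\vartheta)$ with $u = \tilde{\Phi}(u,\vartheta)$ projects onto exactly $\mathcal{S}$, which by hypothesis is compactly contained in $\Omega$. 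This is the precise admissibility condition required to apply the Leray--Schauder homotopy invariance on an arbitrary open set, yielding
\begin{equation*}
\text{\rm deg}_{LS}(Id_{X}-\tilde{\Phi}(\cdot,\vartheta),\Omega,0) = \mathrm{const},\quad \vartheta \in \mathopen{[}0,1\mathclose{]},
\end{equation*}
and in particular the equality $D_{L}(L-N,\Omega) = D_{L}(L-\hat{N},\Omega)$ claimed in the statement.

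The only real subtlety, and the step I expect to be the main obstacle, is the unboundedness of $\Omega$. In the classical formulation on bounded open sets, admissibility comes for free from the assumption that no fixed points lie on $\partial\Omega$; here, since $\Omega$ is only required to be open, one must invoke the locally compact / unbounded-domain formulation of the Leray--Schauder degree, whose homotopy invariance is formulated exactly in terms of the compactness of the full solution set $\mathcal{S}$ inside $\Omega$. Once this machinery is cited, the verification of the hypotheses is routine, and the rest of the argument is just the standard translation between coincidence and fixed point equations.
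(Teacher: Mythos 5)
Your argument is correct, and it is precisely the standard derivation that the paper relies on: the paper states this lemma without proof, presenting it as the classical homotopy-invariance property of the coincidence degree and deferring to the Nussbaum formulation of the Leray--Schauder degree for locally compact maps on arbitrary open sets. Your two key steps --- the equivalence between $u=\tilde{\Phi}(u,\vartheta)$ and $Lu=\tilde{N}(u,\vartheta)$, and the observation that compactness of $\mathcal{S}$ (together with the $L$-complete continuity of $\tilde{N}$, which makes $\tilde{\Phi}$ compact on bounded sets and hence locally compact) is exactly the admissibility condition for that degree --- are the same ingredients the paper makes explicit in Remark~\ref{rem-2.1}.
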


\begin{remark}\label{rem-2.1}
In Lemma~\ref{lemma-inv} we have stated the homotopy invariance for an $L$-completely continuous map $\tilde{N}$.
We stress that the same conclusion holds for a continuous map $\tilde{N}$ such that the set $\mathcal{S}$ is compact
and there exists a bounded open neighborhood $\mathcal{W}$ of $\mathcal{S}$ such that $\overline{\mathcal{W}}\subseteq \Omega$ and
$(K(Id_{Z}-Q)\tilde{N})|_{\mathopen{[}0,1\mathclose{]}\times\overline{\mathcal{W}}}$ is a compact map.
$\hfill\lhd$
\end{remark}

\medskip

Let $\pi^{X}_{i} \colon X \to X_{i}$ be the standard projection.
If $\Omega\subseteq X$, we define
\begin{equation*}
\Omega_{i} := \pi^{X}_{i}(\Omega).
\end{equation*}
We observe that, if $\Omega$ is open in $X$, then $\Omega_{i}$ is open in $X_{i}$, for all $i=1,\ldots,n$.

\medskip

Now we recall the \textit{Reduction Formula} of the Leray-Schauder degree for locally compact operators,
which is a direct consequence of the \textit{Commutativity property} (cf.~\cite[pp.~26--27]{Nu-85} and \cite[pp.~148--149]{Nu-93}).
This property will be crucial in the proof of a subsequent result (cf.~Lemma~\ref{lem-2.3}).

\begin{lemma}[Reduction Formula]\label{red-for}
Let $X$ be a normed linear space. Let $U \subseteq X$ be an open (possibly unbounded) set. Let $\Psi\colon U\to X$ be a continuous map such that $\text{\rm deg}_{LS}(Id_{X}-\Psi, U,0)$ is defined.
Let $Y\subseteq X$ be a subspace such that $\Psi(U)\subseteq Y$.
Then
\begin{equation*}
\text{\rm deg}_{LS}(Id_{X}-\Psi, U,0) = \text{\rm deg}_{LS}(Id_{Y}-\Psi|_{Y}, U\cap Y,0).
\end{equation*}
\end{lemma}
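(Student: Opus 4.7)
The plan is to exploit the commutativity property of the Leray-Schauder degree, of which the reduction formula is explicitly stated in the excerpt to be a direct consequence. As a preliminary observation, the hypothesis $\Psi(U)\subseteq Y$ forces every fixed point $x=\Psi(x)$ to lie automatically in $U\cap Y$, so the coincidence sets of $Id_{X}-\Psi$ on $U$ and of $Id_{Y}-\Psi|_{Y}$ on $U\cap Y$ coincide as subsets of $Y$. In particular, the right-hand side degree is defined whenever the left-hand side one is.

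To identify the two degrees, the natural step is to factor $\Psi$ through $Y$. Write $\Psi = j\circ\widetilde{\Psi}$, where $\widetilde{\Psi}\colon U\to Y$ is $\Psi$ regarded as having codomain $Y$ and $j\colon Y\hookrightarrow X$ is the canonical inclusion. The commutativity property of the Leray-Schauder degree then gives
\begin{equation*}
\text{\rm deg}_{LS}(Id_{X}-j\circ\widetilde{\Psi},\,U,\,0) = \text{\rm deg}_{LS}(Id_{Y}-\widetilde{\Psi}\circ j,\,j^{-1}(U),\,0),
\end{equation*}
and since $\widetilde{\Psi}\circ j$ is precisely $\Psi|_{Y}$ in the sense of the statement, and $j^{-1}(U)=U\cap Y$, this is exactly the claimed identity.

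The only subtlety is that we are in Nussbaum's framework for locally compact maps on arbitrary (possibly unbounded) open sets, rather than the classical compact setting in which commutativity is usually formulated. I would handle this by excision. The set $\mathcal{K}:=\{x\in U\colon x=\Psi(x)\}$ is compact by the very fact that the Leray-Schauder degree of $Id_{X}-\Psi$ on $U$ is defined, and by local compactness of $\Psi$ one can find a bounded open neighborhood $W$ of $\mathcal{K}$ with $\overline{W}\subseteq U$ on which $\Psi$ is compact. Excision then makes both degrees unchanged if $U$ is replaced by $W$ and $U\cap Y$ by $W\cap Y$, and on $W$ the classical commutativity applies.

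The technical core, and the step where I expect the main obstacle, is checking commutativity in this compact setting through a Schauder approximation that is compatible with $Y$: approximate $\Psi|_{\overline{W}}$ uniformly by a finite-rank map $\Psi_{\varepsilon}$ obtained from a finite $\varepsilon$-net of $\Psi(\overline{W})$. The crucial point is that, since $\Psi(\overline{W})\subseteq Y$ is compact inside $Y$, the $\varepsilon$-net can be chosen within $Y$, so the range of $\Psi_{\varepsilon}$ lies in a finite-dimensional subspace $F\subseteq Y$. The identity then reduces to the standard reduction property of the Brouwer degree on $W\cap F'$ versus $W\cap Y\cap F'$ for any finite-dimensional $F'\supseteq F$, which is elementary. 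What remains is essentially bookkeeping: ensuring that the approximation, the choice of finite-dimensional subspace, and the excision are all performed consistently with the subspace $Y$, so that the limit $\varepsilon\to 0$ yields the same value on both sides.
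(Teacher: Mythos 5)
Your proposal is correct and follows essentially the same route as the paper, which states the Reduction Formula without proof as a direct consequence of the commutativity property of the Leray--Schauder degree for locally compact maps (citing Nussbaum), exactly the factorization $\Psi = j\circ\widetilde{\Psi}$ you use. Your additional excision-plus-Schauder-approximation argument correctly fills in the details that the paper delegates to the references.
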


In the statement, we implicitly identify $\Psi$ with $j\circ\Psi$,
where $j\colon Y \to X$ is the (continuous) inclusion.

The following result is an application of Lemma~\ref{red-for}.

\begin{lemma}\label{lem-2.3}
Let $\Omega\subseteq X$ be an open (possibly unbounded) set.
Let $L$ be as above and $\hat{N} \colon \Omega \to Z$ be an $L$-completely continuous operator.
Suppose that $\hat{N}u$, $u=(u_{1},\ldots,u_{n})\in \Omega$, has components of the following form
\begin{equation*}
\begin{cases}
\, \hat{N}_{i}(u_{i+1}), & \text{for } \, i=1,\ldots,n-1; \\
\, \hat{N}_{n}(u_{1},\ldots,u_{n}).
\end{cases}
\end{equation*}
Assume that
\begin{equation}\label{cond-i}
\text{\rm Im}\,L_{i}\cap \hat{N}_{i}(\Omega_{i+1} \cap \ker L_{i+1}) \subseteq \{0_{Z_{i}}\}, \quad \text{for all } \, i=1,\ldots,n-1,
\end{equation}
and
\begin{equation}\label{cond-n}
\text{\rm Im}\,L_{n}\cap \hat{N}_{n}(\Omega\cap\text{\rm dom}\,L) \subseteq \{0_{Z_{n}}\}.
\end{equation}
Moreover, assume that
\begin{equation*}
\bigl{\{} u\in \Omega\cap\ker L \colon \hat{N}u = 0_{Z} \bigr{\}}
\end{equation*}
is a compact subset of $\Omega$.
Then
\begin{equation*}
D_{L}(L-\hat{N},\Omega) = \text{\rm deg}_{B}(\mathcal{N},\Omega \cap \ker L,0),
\end{equation*}
where $\mathcal{N}\colon \Omega\cap \ker L \to \ker L$ is defined as
\begin{equation*}
\mathcal{N}:=\bigl{(}-J_{1}Q_{1}\hat{N}_{1}|_{\Omega_{2}\cap\ker L_{2}},
\ldots,-J_{n-1}Q_{n-1}\hat{N}_{n-1}|_{\Omega_{n}\cap\ker L_{n}},
-J_{n}Q_{n}\hat{N}_{n}|_{\Omega\cap\ker L} \bigr{)}.
\end{equation*}
\end{lemma}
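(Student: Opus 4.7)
The plan is to reduce the coincidence degree on $\Omega$ to a Brouwer degree on $\ker L$ by two successive moves: an $L$-compact homotopy that replaces $\hat{N}$ by its projection $Q\hat{N}$, followed by an application of the Reduction Formula (Lemma~\ref{red-for}) that shrinks the ambient space to the finite-dimensional subspace $\ker L$.

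For the homotopy, I would consider
\[
\tilde{N}\colon \Omega\times\mathopen{[}0,1\mathclose{]} \to Z, \qquad \tilde{N}(u,\vartheta) := Q\hat{N}u + \vartheta(Id_{Z}-Q)\hat{N}u,
\]
so that $\tilde{N}(\cdot,0) = Q\hat{N}$ and $\tilde{N}(\cdot,1) = \hat{N}$, and $\tilde{N}$ is $L$-completely continuous since $\hat{N}$ is and $Q$ is a continuous projection onto a finite-dimensional subspace. The crux is to show that
\[
\mathcal{S} = \bigcup_{\vartheta\in\mathopen{[}0,1\mathclose{]}}\bigl\{u\in\Omega\cap\text{\rm dom}\,L \colon Lu = \tilde{N}(u,\vartheta)\bigr\}
\]
is compact. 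If $Lu = \tilde{N}(u,\vartheta)$, applying $Q$ and using $QL = 0$ together with $Q^{2}=Q$ gives $Q\hat{N}u = 0_{Z}$, so the equation collapses to $Lu = \vartheta\hat{N}u$. The last component then reads $L_{n}u_{n} = \vartheta\hat{N}_{n}(u)$, which forces $\hat{N}_{n}(u)\in \text{\rm Im}\,L_{n}$; by \eqref{cond-n} this gives $\hat{N}_{n}(u) = 0_{Z_{n}}$ and $u_{n}\in\ker L_{n}$. A backward induction on $i = n-1,\ldots,1$ based on \eqref{cond-i} (noting that at each step $u_{i+1}\in \Omega_{i+1}\cap\ker L_{i+1}$ already, since $u\in\Omega$ and hence $u_{i+1}\in\Omega_{i+1}$) yields $u_{i}\in\ker L_{i}$ and $\hat{N}_{i}(u_{i+1}) = 0_{Z_{i}}$. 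Consequently $\mathcal{S}$ coincides with $\{u\in\Omega\cap\ker L\colon \hat{N}u = 0_{Z}\}$, which is compact by hypothesis, and Lemma~\ref{lemma-inv} yields $D_{L}(L-\hat{N},\Omega) = D_{L}(L-Q\hat{N},\Omega)$.

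For the reduction step, since $(Id_{Z}-Q)Q = 0$, the fixed-point operator associated with $Q\hat{N}$ simplifies to
\[
\Phi_{Q\hat{N}}(u) = Pu + JQ\hat{N}u,
\]
which takes values in the finite-dimensional subspace $\ker L$. Lemma~\ref{red-for} with $Y = \ker L$ then gives
\[
D_{L}(L-Q\hat{N},\Omega) = \text{\rm deg}_{LS}\bigl(Id_{\ker L} - \Phi_{Q\hat{N}}|_{\ker L},\, \Omega\cap\ker L,\, 0\bigr),
\]
and on $\ker L$ one has $P|_{\ker L} = Id_{\ker L}$, so the integrand reduces to $-JQ\hat{N}|_{\ker L}$; since $\ker L$ is finite-dimensional, $\text{\rm deg}_{LS}$ agrees with $\text{\rm deg}_{B}$. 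Finally, because $\hat{N}_{i}$ depends only on $u_{i+1}$ for $i<n$, the restriction $-JQ\hat{N}|_{\Omega\cap\ker L}$ is exactly the map $\mathcal{N}$ of the statement. The step I expect to be the main obstacle is the compactness of $\mathcal{S}$ described above, which is the only place where the structural hypotheses \eqref{cond-i}--\eqref{cond-n} are actually invoked; the remainder is a direct unfolding of Mawhin's degree definition.
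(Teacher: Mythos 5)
Your proof is correct and follows essentially the same route as the paper: your homotopy $\tilde{N}(u,\vartheta)=Q\hat{N}u+\vartheta(Id_{Z}-Q)\hat{N}u$ induces exactly the fixed-point operator $Pu+JQ\hat{N}u+\vartheta K(Id_{Z}-Q)\hat{N}u$ used in the paper, and the backward induction through \eqref{cond-n} and \eqref{cond-i} followed by the Reduction Formula is identical. The only cosmetic caveat is that at $\vartheta=0$ the inclusion $\hat{N}_{i}u_{i+1}\in\text{\rm Im}\,L_{i}$ comes from $Q\hat{N}u=0$ rather than from the equation $L_{i}u_{i}=\vartheta\hat{N}_{i}u_{i+1}$, a case the paper treats separately but which your argument already covers.
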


\begin{proof}
First of all, we introduce the operator $\tilde{\Phi} \colon \mathopen{[}0,1\mathclose{]} \times \Omega \to X$
of the form $\tilde{\Phi}(\vartheta,u)=\tilde{\Phi}^{\vartheta}(u)=(\tilde{\Phi}^{\vartheta}_{1}(u),\ldots,\tilde{\Phi}^{\vartheta}_{n}(u))$,
with $\tilde{\Phi}^{\vartheta}_{i} \colon \Omega \to X_{i}$ defined as
\begin{equation*}
\begin{cases}
\, \tilde{\Phi}^{\vartheta}_{i}(u) := P_{i}u_{i} + J_{i}Q_{i}\hat{N}_{i}u_{i+1} + \vartheta K_{i}(Id_{Z_{i}}-Q_{i})\hat{N}_{i}u_{i+1}, & i=1,\ldots,n-1; \\
\, \tilde{\Phi}^{\vartheta}_{n}(u) := P_{n}u_{n} + J_{n}Q_{n}\hat{N}_{n}u + \vartheta K_{n}(Id_{Z_{n}}-Q_{n})\hat{N}_{n}u.
\end{cases}
\end{equation*}
We stress that $\tilde{\Phi}$ is the completely continuous operator associated with the coincidence equation
\begin{equation*}
Lu = \vartheta\hat{N}u,\quad u\in \Omega \cap \text{\rm dom}\,L, \quad \vartheta \in \mathopen{[}0,1\mathclose{]},
\end{equation*}
in the sense that $u\in \Omega$ is such that $u=\tilde{\Phi}^{\vartheta}(u)$ for some $\vartheta\in \mathopen{]}0,1\mathclose{]}$
if and only if $u\in \Omega \cap \text{\rm dom}\,L$ and $Lu = \vartheta\hat{N}u$.

We claim that the set
\begin{equation*}
\tilde{\mathcal{S}}:=\bigcup_{\vartheta\in\mathopen{[}0,1\mathclose{]}}\bigl{\{}u\in \Omega \colon u=\tilde{\Phi}^{\vartheta}(u) \bigr{\}}
\end{equation*}
is a compact subset of $\Omega$.

Let us fix an arbitrary $\vartheta\in\mathopen{]}0,1\mathclose{]}$.
If $u\in\Omega$ is such that $u=\tilde{\Phi}^{\vartheta}(u)$, then, in particular, from the last equation it holds that
\begin{equation*}
u_{n} = P_{n}u_{n} + J_{n}Q_{n}\hat{N}_{n}u + \vartheta K_{n}(Id_{Z_{n}}-Q_{n})\hat{N}_{n}u,
\end{equation*}
so that
\begin{equation*}
u\in \Omega\cap\text{\rm dom}\,L \quad
(u_{n}\in \Omega_{n}\cap\text{\rm dom}\,L_{n}) \quad \text{ and } \quad L_{n}\dfrac{u_{n}}{\vartheta}=\hat{N}_{n}u.
\end{equation*}
From hypothesis \eqref{cond-n}, we easily obtain that
\begin{equation*}
u_{n} \in \Omega_{n} \cap \ker L_{n} \quad \text{ and } \quad \hat{N}_{n}u=0_{Z_{n}}.
\end{equation*}
Next, considering the $(n-1)$-component of the equality $u=\tilde{\Phi}^{\vartheta}(u)$, we can write
\begin{equation*}
u_{n-1} = P_{n-1}u_{n-1} + J_{n-1}Q_{n-1}\hat{N}_{n-1}u_{n} + \vartheta K_{n-1}(Id_{Z_{n-1}}-Q_{n-1})\hat{N}_{n-1}u_{n},
\end{equation*}
so that
\begin{equation*}
u_{n-1}\in \Omega_{n-1}\cap\text{\rm dom}\,L_{n-1} \quad \text{ and } \quad  L_{n-1}\dfrac{u_{n-1}}{\vartheta}=\hat{N}_{n-1}u_{n}.
\end{equation*}
Taking into account that $u_{n} \in \Omega_{n} \cap \ker L_{n}$, hypothesis \eqref{cond-i} (with $i=n-1$) ensures that
\begin{equation*}
u_{n-1} \in \Omega_{n-1} \cap \ker L_{n-1} \quad \text{ and } \quad \hat{N}_{n-1}u_{n}=0_{Z_{n-1}}.
\end{equation*}
Proceeding in this way (by repeating inductively the same argument), hypothesis \eqref{cond-i} ensures that
\begin{equation*}
u_{i} \in \Omega_{i} \cap \ker L_{i} \quad \text{ and } \quad \hat{N}_{i}u_{i}=0_{Z_{i}}, \quad \text{for all } \, i=1,\ldots,n-1.
\end{equation*}

Next, let $\vartheta=0$.
If $u\in\Omega$ is such that $u=\tilde{\Phi}^{0}(u)$, then
\begin{equation*}
\begin{cases}
\, u_{i} = P_{i}u_{i} + J_{i}Q_{i}\hat{N}_{i} u_{i+1}, & i=1,\ldots,n-1; \\
\, u_{n} = P_{n}u_{n} + J_{n}Q_{n}\hat{N}_{n}u.
\end{cases}
\end{equation*}
We immediately deduce that
\begin{equation*}
u_{i} \in \Omega_{i} \cap \ker L_{i} \quad \text{for all } \, i=1,\ldots,n,
\end{equation*}
and so $u_{i} = P_{i}u_{i}$.
Therefore, $Q_{i}\hat{N}_{i}u_{i+1}=0_{Z_{i}}$, for $i=1,\ldots,n-1$, and $Q_{n}\hat{N}_{n}u=0_{Z_{n}}$.
Then, $\hat{N}_{i}u_{i+1}\in \text{\rm Im}\,L_{i}$, for $i=1,\ldots,n-1$, and $\hat{N}_{n}u\in \text{\rm Im}\,L_{n}$. Hence, we obtain that
\begin{equation*}
\hat{N}_{i}u_{i+1} \in \text{\rm Im}\,L_{i}\cap \hat{N}_{i}(\Omega_{i+1} \cap \ker L_{i+1}) = \{0_{Z_{i}}\}, \quad \text{for all } \, i=1,\ldots,n-1;
\end{equation*}
and
\begin{equation*}
\hat{N}_{n}u \in \text{\rm Im}\,L_{n}\cap \hat{N}_{n}(\Omega\cap\text{\rm dom}\,L) = \{0_{Z_{n}}\}.
\end{equation*}
Then
\begin{equation*}
\hat{N}u = 0_{Z}.
\end{equation*}

By the above observations, we have that $\tilde{\mathcal{S}} \subseteq \{ u\in \Omega\cap\ker L \colon \hat{N}u = 0_{Z} \}$.
Since the converse inclusion is trivial, we obtain that
\begin{equation*}
\tilde{\mathcal{S}} = \bigl{\{} u\in \Omega\cap\ker L \colon \hat{N}u = 0_{Z} \bigr{\}}.
\end{equation*}
Therefore, by the hypothesis, $\tilde{\mathcal{S}}$ is a compact subset of $\Omega$.

The homotopy invariance of the Leray-Schauder degree for locally compact operators implies that the map
\begin{equation*}
\vartheta \mapsto \text{\rm deg}_{LS}(Id_{X}-\tilde{\Phi}^{\vartheta},\Omega,0)
\end{equation*}
is well-defined and constant on $\mathopen{[}0,1\mathclose{]}$. In particular, it holds that
\begin{equation*}
\text{\rm D}_{L}(L-\hat{N},\Omega)
= \text{\rm deg}_{LS}(Id_{X}-\tilde{\Phi}^{1},\Omega,0)
= \text{\rm deg}_{LS}(Id_{X}-\tilde{\Phi}^{0},\Omega,0),
\end{equation*}
where
\begin{equation*}
\tilde{\Phi}^{0} = Pu+JQ\hat{N} \colon \Omega \to \ker L.
\end{equation*}

Finally, we notice that $\tilde{\Phi}^{0}(\Omega)\subseteq \ker L$ and recall that $\ker L$ is a finite-dimensional subspace of $X$.
Therefore, we can apply the \textit{Reduction Formula} (i.e.~Lemma~\ref{red-for} with $U=\Omega$, $\Psi=\tilde{\Phi}^{0}$ and $Y=\ker L$) and we conclude that
\begin{equation*}
\begin{aligned}
\text{\rm deg}_{LS}(Id_{X}-\tilde{\Phi}^{0},\Omega,0)
&= \text{\rm deg}_{LS}(Id_{\ker L}-\tilde{\Phi}^{0}|_{\ker L},\Omega\cap \ker L,0)
\\ &= \text{\rm deg}_{B}(\mathcal{N},\Omega \cap \ker L,0).
\end{aligned}
\end{equation*}
The thesis immediately follows.
\end{proof}

\begin{remark}\label{rem-2.2}
We underline that the same conclusion of Lemma~\ref{lem-2.3} holds if in the statement we replace conditions
\eqref{cond-i} and \eqref{cond-n} with
\begin{equation*}
\hat{N}_{i}(\Omega_{i+1} \cap \ker L_{i+1}) \subseteq \text{\rm coker}\,L_{i}, \quad \text{for all } \, i=1,\ldots,n-1,
\end{equation*}
and
\begin{equation*}
\hat{N}_{n}(\Omega) \subseteq \text{\rm coker}\,L_{n},
\end{equation*}
respectively.
$\hfill\lhd$
\end{remark}

\medskip

Combining Lemma~\ref{lemma-inv} and Lemma~\ref{lem-2.3}, we obtain the following result.

\begin{theorem}\label{th-2.1}
Let $L$ be as above and let $M\colon \text{\rm dom}\,M (\subseteq X) \to Z$ be a nonlinear $L$-completely continuous operator.
Suppose that $M u$, $u=(u_{1},\ldots,u_{n})\in \text{\rm dom}\,M$, has components of the following form
\begin{equation*}
\begin{cases}
\, M_{i}(u_{i+1}), & \text{for } \, i=1,\ldots,n-1; \\
\, M_{n}(u_{1},\ldots,u_{n}).
\end{cases}
\end{equation*}
For $\vartheta\in\mathopen{]}0,1\mathclose{]}$, consider the following coincidence system
\begin{equation*}
\begin{cases}
\, L_{i}u_{i} = M_{i}(u_{i+1}), & i=1,\ldots,n-1; \\
\, L_{n}u_{n} = \vartheta  M_{n}(u_{1},\ldots,u_{n}).
\end{cases}
\leqno{(\mathscr{P}_{\vartheta})}
\end{equation*}
Let $\Omega\subseteq \text{\rm dom}\,M$ be an open (possibly unbounded) set. We define
\begin{equation*}
\mathcal{S}_{\vartheta}:=\{u\in\Omega\cap \text{\rm dom}\,L \colon u \text{ is a solution of } (\mathscr{P}_{\vartheta})\},
\quad \vartheta\in\mathopen{]}0,1\mathclose{]}.
\end{equation*}
Assume that
\begin{itemize}
\item [$(i)$] $\text{\rm Im}\,L_{i}\cap M_{i}(\Omega_{i+1} \cap \ker L_{i+1}) \subseteq \{0_{Z_{i}}\}$, for all $i=1,\ldots,n-1$;
\item [$(ii)$] there exists a compact set $\mathcal{K}\subseteq\Omega$ such that $\mathcal{S}_{\vartheta}\subseteq\mathcal{K}$, for all $\vartheta\in\mathopen{]}0,1\mathclose{]}$;
\item [$(iii)$] the set $\mathcal{S}_{0}:=\{u\in\Omega\cap \ker L \colon QMu=0\}$ is compact.
\end{itemize}
Then
\begin{equation*}
D_{L}(L-M,\Omega) = \text{\rm deg}_{B}(\mathcal{M},\Omega \cap \ker L,0),
\end{equation*}
where $\mathcal{M}\colon \Omega\cap \ker L \to \ker L$ is defined as
\begin{equation*}
\mathcal{M}:=\bigl{(}-J_{1}Q_{1}M_{1}|_{\Omega_{2}\cap\ker L_{2}},
\ldots,-J_{n-1}Q_{n-1}M_{n-1}|_{\Omega_{n}\cap\ker L_{n}},
-J_{n}Q_{n}M_{n}|_{\Omega\cap\ker L} \bigr{)}.
\end{equation*}
\end{theorem}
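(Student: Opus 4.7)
My plan is to reduce the statement to Lemma~\ref{lem-2.3} by means of a homotopy that modifies only the $n$-th component of $M$, so that the resulting auxiliary operator satisfies the image condition \eqref{cond-n}. Concretely, I would introduce
\begin{equation*}
\hat{N}(u) := \bigl(M_{1}(u_{2}),\ldots,M_{n-1}(u_{n}),\,Q_{n}M_{n}(u)\bigr),
\end{equation*}
noting that $Q_{n}M_{n}$ takes values in $\text{\rm Im}\,Q_{n}$, a complement of $\text{\rm Im}\,L_{n}$, so condition \eqref{cond-n} is automatic, and that $Q_{n}(Q_{n}M_{n})=Q_{n}M_{n}$, so the Brouwer-degree formula furnished by Lemma~\ref{lem-2.3} for $\hat{N}$ matches exactly the map $\mathcal{M}$ in the statement. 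To transport the coincidence degree from $M$ to $\hat{N}$ I would use the linear interpolation
\begin{equation*}
\tilde{N}(u,\vartheta) := \bigl(M_{1}(u_{2}),\ldots,M_{n-1}(u_{n}),\,\vartheta M_{n}(u)+(1-\vartheta)Q_{n}M_{n}(u)\bigr),
\end{equation*}
which satisfies $\tilde{N}(\cdot,0)=\hat{N}$, $\tilde{N}(\cdot,1)=M$, and is $L$-completely continuous since $M$ is and $Q_{n}$ is a continuous finite-rank projection.

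The central step, and the one I expect to be the main obstacle, is verifying the compactness hypothesis of Lemma~\ref{lemma-inv} for this homotopy. The key algebraic trick is the following: if $u\in\Omega\cap\text{\rm dom}\,L$ satisfies $Lu=\tilde{N}(u,\vartheta)$, then applying $Q_{n}$ to the $n$-th equation gives
\begin{equation*}
0=Q_{n}L_{n}u_{n}=\vartheta Q_{n}M_{n}(u)+(1-\vartheta)Q_{n}M_{n}(u)=Q_{n}M_{n}(u),
\end{equation*}
so automatically $Q_{n}M_{n}(u)=0$, and the $n$-th equation collapses to $L_{n}u_{n}=\vartheta M_{n}(u)$ while the first $n-1$ equations read $L_{i}u_{i}=M_{i}(u_{i+1})$. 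For $\vartheta\in\mathopen{]}0,1\mathclose{]}$ this is precisely $(\mathscr{P}_{\vartheta})$, so $u\in\mathcal{S}_{\vartheta}\subseteq\mathcal{K}$ by hypothesis $(ii)$. At $\vartheta=0$ I would run the same inductive cascade as in the proof of Lemma~\ref{lem-2.3}: condition $(i)$ forces $u_{n}\in\ker L_{n}$, then $M_{n-1}(u_{n})=0$ and $u_{n-1}\in\ker L_{n-1}$, and so on, yielding $u\in\Omega\cap\ker L$ with $M_{i}(u_{i+1})=0$ for $i<n$ and $Q_{n}M_{n}(u)=0$, i.e.\ $QMu=0$, so $u\in\mathcal{S}_{0}$, which is compact by $(iii)$. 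Hence the total coincidence set for the homotopy lies in the compact set $\mathcal{K}\cup\mathcal{S}_{0}\subseteq\Omega$ and, being closed in $\Omega$, is itself compact.

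With this in hand, Lemma~\ref{lemma-inv} gives $D_{L}(L-M,\Omega)=D_{L}(L-\hat{N},\Omega)$, and the proof concludes by verifying that $\hat{N}$ satisfies all the assumptions of Lemma~\ref{lem-2.3}: condition \eqref{cond-i} for $\hat{N}_{i}=M_{i}$ ($i<n$) is hypothesis $(i)$; condition \eqref{cond-n} for $\hat{N}_{n}=Q_{n}M_{n}$ is automatic as noted above; and the set $\{u\in\Omega\cap\ker L:\hat{N}u=0\}$ coincides with $\mathcal{S}_{0}$ (the two-direction verification uses $(i)$ in the same fashion as in the $\vartheta=0$ argument above), hence is compact by $(iii)$. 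Applying Lemma~\ref{lem-2.3} then yields
\begin{equation*}
D_{L}(L-M,\Omega)=D_{L}(L-\hat{N},\Omega)=\text{\rm deg}_{B}(\mathcal{M},\Omega\cap\ker L,0),
\end{equation*}
where the identity $Q_{n}(Q_{n}M_{n})=Q_{n}M_{n}$ takes care of the last component of $\mathcal{M}$.
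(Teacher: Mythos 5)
Your proposal is correct and follows essentially the same route as the paper: the same auxiliary homotopy $\vartheta M_{n}+(1-\vartheta)Q_{n}M_{n}$ on the last component, the same identification of its coincidence sets with $\mathcal{S}_{\vartheta}$ (for $\vartheta>0$) and a subset of $\mathcal{S}_{0}$ (at $\vartheta=0$) via the projection trick and the cascade from condition $(i)$, followed by Lemma~\ref{lemma-inv} and then Lemma~\ref{lem-2.3} applied to $\hat{N}=(M_{1},\ldots,M_{n-1},Q_{n}M_{n})$. No gaps.
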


\begin{proof}
For $\vartheta\in\mathopen{[}0,1\mathclose{]}$, we define the auxiliary coincidence system
\begin{equation*}
\begin{cases}
\, L_{i}u_{i} = M_{i}(u_{i+1}), & i=1,\ldots,n-1; \\
\, L_{n}u_{n} = \vartheta M_{n}u +(1-\vartheta) Q_{n}M_{n} u
\end{cases}
\leqno{(\mathscr{A}_{\vartheta})}
\end{equation*}
and the set
\begin{equation*}
\mathcal{S}'_{\vartheta}:=\{u\in\Omega\cap \text{\rm dom}\,L \colon u \text{ is a solution of } (\mathscr{A}_{\vartheta})\}.
\end{equation*}
First of all, we observe that
\begin{equation*}
\mathcal{S}'_{\vartheta} = \mathcal{S}_{\vartheta}, \quad \text{for all } \, \vartheta\in\mathopen{]}0,1\mathclose{]},
\end{equation*}
namely $u$ is a solution of $(\mathscr{A}_{\vartheta})$
if and only if $u$ is a solution of $(\mathscr{P}_{\vartheta})$.
Indeed, if $u$ is a solution of $(\mathscr{A}_{\vartheta})$, applying the projection $Q_{n}$ to the last equation of $(\mathscr{A}_{\vartheta})$, we obtain $Q_{n}M_{n}u= 0$;
then clearly $u$ solves $(\mathscr{P}_{\vartheta})$.
On the other hand, if $u$ is a solution of $(\mathscr{P}_{\vartheta})$, applying the projection $Q_{n}$ to the last equation of $(\mathscr{P}_{\vartheta})$,
we obtain $\vartheta Q_{n}M_{n}u= 0$ (with $\vartheta \neq 0$); then we deduce that $u$ solves $(\mathscr{A}_{\vartheta})$.
Secondly, we notice that
\begin{equation}\label{eq-S0}
\mathcal{S}'_{0} \subseteq \mathcal{S}_{0}.
\end{equation}
Indeed, if $u$ is a solution of $(\mathscr{A}_{0})$, then $L_{n} u_{n} =
Q_{n} M_{n} u$. Hence, $u_{n} \in \ker L_{n}$ and $Q_{n} M_{n} u = 0$.
Therefore, using condition $(i)$, we find that
$u\in\ker L$ and also $QM u = 0$. In this manner \eqref{eq-S0} is proved.

Let $\tilde{M} \colon \Omega\times\mathopen{[}0,1\mathclose{]} \to Z$ be the continuous homotopy with components
\begin{equation*}
\begin{cases}
\, M_{i}(u_{i+1}), & \text{for } \,  i=1,\ldots,n-1; \\
\, \vartheta M_{n}u +(1-\vartheta) Q_{n}M_{n} u,
\end{cases}
\end{equation*}
where $u=(u_{1},\ldots,u_{n})\in \Omega$ and $\vartheta\in\mathopen{[}0,1\mathclose{]}$.
We observe that $\tilde{M}(u,1)=Mu$ and $\tilde{M}(u,0)$ has $Q_{n}M_{n} u$ as last component (which is finite-dimensional).

\smallskip

We divide the proof into two steps.

\smallskip

\noindent
\textit{Step~1. } We claim that
\begin{equation*}
D_{L}(L-M,\Omega) = D_{L}(L-\tilde{M}(\cdot,0),\Omega).
\end{equation*}
We first notice that, from condition $(ii)$ and the above remark, we have $\mathcal{S}'_{\vartheta}\subseteq\mathcal{K}$, for all $\vartheta\in\mathopen{]}0,1\mathclose{]}$.
Recalling also \eqref{eq-S0} together with condition $(iii)$, we find that
\begin{equation*}
\bigcup_{\vartheta\in\mathopen{[}0,1\mathclose{]}}\bigl{\{}u\in \Omega \cap \text{\rm dom}\,L \colon Lu=\tilde{M}(u,\vartheta) \bigr{\}}
= \bigcup_{\vartheta\in\mathopen{[}0,1\mathclose{]}} \mathcal{S}'_{\vartheta}
\end{equation*}
is a compact subset of $\Omega$, since closed and contained in the compact set $ \mathcal{K}\cup\mathcal{S}_{0}$.
Therefore we can apply Lemma~\ref{lemma-inv}. The claim is thus proved.

\smallskip

\noindent
\textit{Step~2. } We claim that
\begin{equation*}
D_{L}(L-\tilde{M}(\cdot,0),\Omega) = \text{\rm deg}_{B}(\mathcal{M},\Omega \cap \ker L,0).
\end{equation*}
We are going to apply Lemma~\ref{lem-2.3} to the $L$-completely continuous operator $\hat{N} \colon \Omega \to Z$ defined in this way:
the components of $\hat{N}u$, $u=(u_{1},\ldots,u_{n})\in \Omega$, have the following form
\begin{equation*}
\begin{cases}
\, \hat{N}_{i}(u_{i+1}) := M_{i}(u_{i+1}), & \text{for } \,  i=1,\ldots,n-1; \\
\, \hat{N}_{n}(u_{1},\ldots,u_{n}) := Q_{n}M_{n}(u_{1},\ldots,u_{n}).
\end{cases}
\end{equation*}
Clearly condition \eqref{cond-i} of Lemma~\ref{lem-2.3} corresponds to condition $(i)$. Moreover, hypothesis \eqref{cond-n} is satisfied, since by the definition of $\hat{N}_{n}$
it holds that $\hat{N}_{n}(\Omega)\subseteq \text{\rm coker}\,L_{n}$ (see also Remark~\ref{rem-2.2}).
Next, we observe that the set $\{u\in\Omega\cap \ker L \colon Mu=0\}$ is a compact subset of $\Omega$, since it is closed and contained in the compact set
$\{u\in\Omega\cap \ker L \colon QMu=0\}=\mathcal{S}_{0}$ (by condition $(iii)$).
Finally, applying Lemma~\ref{lem-2.3}, the claim follows.

From \textit{Step~1} and \textit{Step~2} the proof of the theorem is concluded.
\end{proof}

\begin{remark}\label{rem-2.3}
In Theorem~\ref{th-2.1} we define an homotopy $\tilde{M}(u,\vartheta)$ transforming only the last equation of the systems.
We underline that the same result is valid considering homotopies of the form
\begin{equation*}
\begin{cases}
\, M_{i}(u_{i+1}), & \text{for } \, i=1,\ldots,k-1; \\
\, \vartheta M_{i}(u_{1},\ldots,u_{n}), & \text{for } \, i=k,\ldots,n.
\end{cases}
\end{equation*}
Anyway, in our presentation we prefer to state the results as in Theorem~\ref{th-2.1} in order to present a version which is suitable for the application in Section~\ref{section-3}.
$\hfill\lhd$
\end{remark}

The classical Mawhin's continuation theorem (cf.~\cite[Proposition~2.1]{Ma-74}) deals with an open and \textit{bounded} set $\Omega$
such that for each $\lambda \in \mathopen{]}0,1\mathclose{[}$ the equation $Lu = \lambda Nu$ has no solutions in $\partial\Omega$
and, moreover,
\begin{equation}\label{eq-2.6}
\text{\rm deg}_{B}(-JQN|_{\Omega\cap\ker L},\Omega\cap\ker L,0)\neq0.
\end{equation}
Under these assumptions, there exists a solution $u\in \overline{\Omega}$ to the coincidence equation $Lu = Nu$.
Clearly, in the same situation (i.e.~when $\Omega$ is open and bounded), we could state an analogous existence result for system
\begin{equation*}
\begin{cases}
\, L_{i}u_{i} = M_{i}(u_{i+1}), & i=1,\ldots,n-1; \\
\, L_{n}u_{n} = M_{n}(u_{1},\ldots,u_{n}),
\end{cases}
\leqno{(\mathscr{P}_{1})}
\end{equation*}
using Theorem~\ref{th-2.1}, via the homotopy described in $(\mathscr{P}_{\vartheta})$. In such a case, we should
suppose, instead of \eqref{eq-2.6}, that
\begin{equation}\label{eq-2.7}
\text{\rm deg}_{B}(\mathcal{M},\Omega\cap\ker L,0)\neq0
\end{equation}
holds (where $\mathcal{M}$ is defined as in Theorem~\ref{th-2.1}).

Actually the above new existence result can be stated also for an open and possibly unbounded set $\Omega$.
Precisely, assuming all the hypotheses of Theorem~\ref{th-2.1} and in addition that \eqref{eq-2.7} holds, we can
immediately conclude that there exists a solution $u\in \Omega$ to the coincidence system $(\mathscr{P}_{1})$.

In order to make such new existence theorems useful for the
applications, we need first to provide more explicit conditions in
order to evaluate the Brouwer degree associated with the map
$\mathcal{M}$. Therefore, we conclude this section with a result
that allows us to compute the degree of a map having the same
structure as $\mathcal{M}$ in Theorem~\ref{th-2.1}.
To this aim, we first introduce the following notation.
Keeping for the rest of the section the hypotheses of Theorem~\ref{th-2.1}, for $i=1,\ldots,n-1$,
let us define the maps $\eta_{i}\colon\Omega_{i+1}\cap \ker L_{i+1} \to \ker L_{i}$ as
\begin{equation*}
\eta_{i}(w) := -J_{i}Q_{i}M_{i} w, \quad w\in\Omega_{i+1}\cap \ker L_{i+1},
\end{equation*}
and the map $\eta_{n}\colon\tilde{\Omega}_{1} \to \ker L_{n}$ as
\begin{equation*}
\eta_{n}(w) := -J_{n}Q_{n}M_{n}(w,0,\ldots,0), \quad w\in\tilde{\Omega}_{1},
\end{equation*}
where $\tilde{\Omega}_{1}:=\{w\in \ker L_{1} \colon (w,0,\ldots,0)\in \Omega\}$.

We will also assume some additional conditions which simplify the statement of the next result and which are natural
for the applications presented in Section~\ref{section-3} and Section~\ref{section-4}.
In more detail, in Lemma~\ref{lem-2.4} we assume this crucial hypothesis
\begin{itemize}
\item[$(h_{1})$] $\text{\rm dim}(\ker L_{i}) = d$, for all $i=1,\ldots,n$.
\end{itemize}
Accordingly, for $i=1,\ldots,n$, it is not restrictive to identify $\ker L_{i}$ with $\mathbb{R}^{d}$.
Consequently, condition $(h_{1})$ ensures that $\text{\rm dim}(\text{\rm coker}\,L_{i}) = d$, for all $i=1,\ldots,n$.
Moreover, we also identify $\text{\rm coker}\,L_{i}$ with $\mathbb{R}^{d}$, for all $i=1,\ldots,n$.
Under this position, without loss of generality, for all $i=1,\ldots,n$,
we take $J_{i}=Id_{\mathbb{R}^{d}}$ as linear orientation-preserving isomorphism from $\mathbb{R}^{d}$ to $\mathbb{R}^{d}$.
With this in mind, in the sequel, by an abuse of notation, we will write $-Q_{i}M_{i}$ in place of $-J_{i}Q_{i}M_{i}$.

Under this convention, we state the following result.

\begin{lemma}\label{lem-2.4}
Let $L$, $M$ and $\Omega$ be as in Theorem~\ref{th-2.1}. Let $\mathcal{M}\colon \Omega\cap \ker L \to \ker L$ be defined as
\begin{equation*}
\mathcal{M}(u):=\bigl{(}\eta_{1}(u_{1}),\ldots,\eta_{n-1}(u_{n}),-J_{n}Q_{n}M_{n}|_{\Omega\cap\ker L}u \bigr{)}, \quad u\in\Omega\cap\ker L.
\end{equation*}
Moreover, assume that the degree $\text{\rm deg}_{B}(\mathcal{M},\Omega \cap \ker L,0)$ is well-defined
and suppose that the following conditions hold:
\begin{itemize}
\item[$(h_{1})$] $\text{\rm dim}(\ker L_{i}) = d$, for all $i=1,\ldots,n$;
\item[$(h_{2})$] $0_{X_{i}}\in \Omega_{i}\cap\ker L_{i}$, for all $i=2,\ldots,n$;
\item[$(h_{3})$] $\{w\in\Omega_{i+1}\cap\ker L_{i+1} \colon \eta_{i}(w)=0_{X_{i}}\}=\{0_{X_{i+1}}\}$, for all $i=1,\ldots,n-1$.
\end{itemize}
Then
\begin{equation*}
\begin{aligned}
& \text{\rm deg}_{B}(\mathcal{M},\Omega \cap \ker L,0) =
\\ &= (-1)^{d(n+1)} \, \text{\rm deg}_{B}(\eta_{n},\tilde{\Omega}_{1},0) \cdot \prod_{i=1}^{n-1} \text{\rm deg}_{B}(\eta_{i},\Omega_{i+1} \cap \ker L_{i+1},0).
\end{aligned}
\end{equation*}
\end{lemma}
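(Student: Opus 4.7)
The plan is to pass from $\mathcal{M}$ to a block-diagonal map via three operations: an excision to a product-type neighborhood, a homotopy that decouples the last component, and a linear permutation of the target coordinates whose sign contributes the factor $(-1)^{d(n+1)}$.

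First, I locate the zeros of $\mathcal{M}$ in $\Omega\cap\ker L$. If $\mathcal{M}(u)=0$, then by hypothesis $(h_{3})$ each equation $\eta_{i}(u_{i+1})=0$ forces $u_{i+1}=0_{X_{i+1}}$ for $i=1,\ldots,n-1$; the last component then reduces to $-J_{n}Q_{n}M_{n}(u_{1},0,\ldots,0)=\eta_{n}(u_{1})$, which must vanish. Hence the zero set $Z$ of $\mathcal{M}$ is contained in $\tilde{\Omega}_{1}\times\{0\}^{n-1}$, and its (compact, by well-definedness of the degree) projection $Z_{1}\subseteq\tilde{\Omega}_{1}$ is precisely the zero set of $\eta_{n}$ in $\tilde{\Omega}_{1}$. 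Using $(h_{2})$ together with the openness of $\Omega\cap\ker L$ and compactness of $Z_{1}$, I choose a bounded open neighborhood $W_{1}$ of $Z_{1}$ in $\tilde{\Omega}_{1}$ and $\varepsilon>0$ small enough that $V:=W_{1}\times B(0,\varepsilon)^{n-1}\subseteq\Omega\cap\ker L$ and $B(0,\varepsilon)\subseteq\Omega_{i}\cap\ker L_{i}$ for $i=2,\ldots,n$. The excision property of the Brouwer degree then yields $\text{\rm deg}_{B}(\mathcal{M},\Omega\cap\ker L,0)=\text{\rm deg}_{B}(\mathcal{M},V,0)$.

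Next, on $V$ I consider the homotopy
\[
H(\vartheta,u)=\bigl(\eta_{1}(u_{2}),\ldots,\eta_{n-1}(u_{n}),\,-J_{n}Q_{n}M_{n}(u_{1},(1-\vartheta)u_{2},\ldots,(1-\vartheta)u_{n})\bigr),
\]
which is well-defined on $[0,1]\times V$ because $V$ is star-shaped in the last $n-1$ coordinates about $W_{1}\times\{0\}^{n-1}$. The same inspection as above shows that the zero set of $H(\vartheta,\cdot)$ equals $Z$ for every $\vartheta\in[0,1]$, so homotopy invariance gives $\text{\rm deg}_{B}(\mathcal{M},V,0)=\text{\rm deg}_{B}(\Psi,V,0)$, where $\Psi(u)=(\eta_{1}(u_{2}),\eta_{2}(u_{3}),\ldots,\eta_{n-1}(u_{n}),\eta_{n}(u_{1}))$.

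Finally, the linear bijection $\pi\colon\mathbb{R}^{nd}\to\mathbb{R}^{nd}$ that cyclically moves the last $d$-block of the output to the front converts $\Psi$ into the block-diagonal map $\Psi_{\rm diag}(u)=(\eta_{n}(u_{1}),\eta_{1}(u_{2}),\ldots,\eta_{n-1}(u_{n}))$. Regarded as a permutation of $nd$ coordinates, $\pi$ splits into $d$ disjoint $n$-cycles, so $\text{\rm sgn}(\det\pi)=(-1)^{d(n-1)}=(-1)^{d(n+1)}$. Applying the product property of the Brouwer degree to $\Psi_{\rm diag}$ on $V$, followed by excision in each factor (using $(h_{3})$ to enlarge each ball $B(0,\varepsilon)$ back to $\Omega_{i+1}\cap\ker L_{i+1}$, and $W_{1}$ back to $\tilde{\Omega}_{1}$), yields exactly the claimed product of degrees. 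The main technical obstacle is Step~1, namely constructing the product neighborhood $V$ inside $\Omega\cap\ker L$, which crucially rests on $(h_{2})$ and the compactness of $Z_{1}$; once this is in hand, the sign bookkeeping in Step~3 and the chain of equalities are routine applications of standard degree properties.
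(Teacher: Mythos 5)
Your proof is correct and follows essentially the same strategy as the paper's: a homotopy that freezes $u_{2},\ldots,u_{n}$ to $0$ in the last component, followed by a cyclic permutation of the target blocks (with sign $(-1)^{d(n+1)}$) and the multiplicativity and excision properties of the Brouwer degree. The only real difference is that you excise to a small product neighborhood $V=W_{1}\times B(0,\varepsilon)^{n-1}$ \emph{before} homotoping, whereas the paper runs the homotopy on all of $\Omega\cap\ker L$ and only afterwards passes to the product set $\tilde{\Omega}_{1}\times\prod_{i\geq 2}(\Omega_{i}\cap\ker L_{i})$; your ordering has the minor advantage that the deformation $-Q_{n}M_{n}(u_{1},(1-\vartheta)u_{2},\ldots,(1-\vartheta)u_{n})$ is manifestly well defined and the final product domain manifestly sits inside $\Omega$, points the paper leaves implicit.
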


\begin{proof}
Let
\begin{equation*}
\tilde{\Omega} := \tilde{\Omega}_{1} \times (\Omega_{2}\cap\ker L_{2}) \times \dots \times (\Omega_{n}\cap\ker L_{n})
\end{equation*}
and let $\eta \colon\tilde{\Omega} \to \ker L$ be defined as
\begin{equation*}
\eta (u) = \bigl{(} \eta_{1}(u_{2}), \ldots, \eta_{n-1}(u_{n}), \eta_{n}(u_{1}) \bigr{)}, \quad u=(u_{1},\ldots,u_{n})\in\tilde{\Omega}.
\end{equation*}
Notice that $\tilde{\Omega}\subseteq \Omega\cap\ker L$.

\smallskip

\noindent
\textit{Step~1. } We claim that
\begin{equation*}
\text{\rm deg}_{B}(\mathcal{M},\Omega \cap \ker L,0) = \text{\rm deg}_{B}(\eta,\tilde{\Omega},0).
\end{equation*}

We introduce the operator $\tilde{\mathcal{M}} \colon \mathopen{[}0,1\mathclose{]} \times (\Omega\cap\ker L) \to \ker L$
of the form $\tilde{\mathcal{M}}=(\tilde{M}_{1},\ldots,\tilde{M}_{n})$,
with $\tilde{M}_{i} \colon \mathopen{[}0,1\mathclose{]} \times (\Omega\cap\ker L) \to \ker L_{i}$ defined as
\begin{equation*}
\begin{cases}
\, \tilde{M}_{i}(\vartheta,u) := \eta_{i}(u_{i+1}), & i=1,\ldots,n-1; \\
\, \tilde{M}_{n}(\vartheta,u) := -Q_{n}M_{n}(u_{1},\vartheta u_{2},\ldots,\vartheta u_{n}).
\end{cases}
\end{equation*}
We stress that $\tilde{\mathcal{M}}$ is a completely continuous operator.

We claim that the set
\begin{equation*}
\tilde{\mathcal{S}}:=\bigcup_{\vartheta\in\mathopen{[}0,1\mathclose{]}}\bigl{\{}u\in \Omega\cap\ker L \colon \tilde{\mathcal{M}}(\vartheta,u)=0 \bigr{\}}
\end{equation*}
is a compact subset of $\Omega\cap \ker L$.

Let us fix an arbitrary $\vartheta\in \mathopen{[}0,1\mathclose{]}$. If $u \in \Omega\cap\ker L$ is such that $\tilde{\mathcal{M}}(\vartheta,u)=0$, then
\begin{equation*}
\begin{cases}
\, \eta_{i}(u_{i+1})=0, & i=1,\ldots,n-1; \\
\, -Q_{n}M_{n}(u_{1},\vartheta u_{2},\ldots,\vartheta u_{n})=0.
\end{cases}
\end{equation*}
From the first $(n-1)$ equations and hypothesis $(h_{3})$, we immediately obtain that
\begin{equation*}
u_{i}=0_{X_{i}}, \quad \text{for all } \, i=2,\ldots,n,
\end{equation*}
and hence the last equation reads as follows
\begin{equation*}
\eta_{n}(u_{1})=-Q_{n}M_{n}(u_{1},0,\ldots,0)=0_{X_{n}}.
\end{equation*}
We conclude that
\begin{equation*}
\tilde{\mathcal{S}} = \bigl{\{}(w,0,\ldots,0)\in\Omega \colon w\in \ker L_{1} \bigr{\}} = \bigl{\{}u\in\Omega\cap \ker L \colon \mathcal{M}u=0 \bigr{\}}
\end{equation*}
is a compact subset of $\Omega\cap \ker L$ (since $\text{\rm deg}_{B}(\mathcal{M},\Omega \cap \ker L,0)$ is well-defined).

The homotopic invariance of the Brouwer degree implies that
\begin{equation*}
\vartheta \mapsto \text{\rm deg}_{B}(\tilde{\mathcal{M}}(\vartheta,\cdot),\Omega\cap \ker L,0)
\end{equation*}
is well-defined and constant on $\mathopen{[}0,1\mathclose{]}$.
In particular, since $\tilde{S}\subseteq\tilde{\Omega}$, it holds that
\begin{equation*}
\text{\rm deg}_{B}(\mathcal{M},\Omega \cap \ker L,0) = \text{\rm deg}_{B}(\tilde{\mathcal{M}}(0,\cdot),\Omega\cap \ker L,0) = \text{\rm deg}_{B}(\eta,\tilde{\Omega},0).
\end{equation*}

\smallskip

\noindent
\textit{Step~2. }
Let $\tilde{\eta} \colon \tilde{\Omega} \to \ker L$ be defined as
\begin{equation*}
\tilde{\eta} (u) = \bigl{(} \eta_{n}(u_{1}), \eta_{1}(u_{2}), \ldots, \eta_{n-1}(u_{n}) \bigr{)}, \quad u=(u_{1},\ldots,u_{n})\in\tilde{\Omega}.
\end{equation*}
Clearly
\begin{equation*}
\tilde{\eta}(u) = P \eta(u), \quad \forall \, u\in \tilde{\Omega},
\end{equation*}
where
\begin{equation*}
P =
\begin{pmatrix}
0 & 0 & \cdots & 0 & I_{d} \\
I_{d} & 0 & \cdots & 0 & 0 \\
0 & \ddots  & \ddots & \vdots & \vdots \\
\vdots & \ddots & I_{d} & 0 & 0 \\
0 & \cdots & 0 & I_{d} & 0
 \end{pmatrix}
\in \mathbb{R}^{dn\times dn}
\end{equation*}
is a permutation matrix with determinant $\det (P)=(-1)^{d(n+1)}$, where $I_{d}:=Id_{\mathbb{R}^{d}}$.
Therefore, using the definition of the Brouwer degree of a composition of maps, we obtain
\begin{equation*}
\begin{aligned}
\text{\rm deg}_{B}(\tilde{\eta},\tilde{\Omega},0) &= \text{\rm deg}_{B}(P \eta,\tilde{\Omega},0) = \text{\rm sign}(\det (P)) \, \text{\rm deg}_{B}(\eta,\tilde{\Omega},0)
\\ &= (-1)^{d(n+1)} \, \text{\rm deg}_{B}(\eta,\tilde{\Omega},0).
\end{aligned}
\end{equation*}
Now, the multiplicativity property of the Brouwer degree (cf.~\cite[Theorem~11.3]{Br-14}) gives
\begin{equation*}
\text{\rm deg}_{B}(\tilde{\eta},\tilde{\Omega},0) = \text{\rm deg}_{B}(\eta_{n},\tilde{\Omega}_{1},0) \cdot \prod_{i=1}^{n-1}  \text{\rm deg}_{B}(\eta_{i},\Omega_{i+1}\cap\ker L_{i+1},0).
\end{equation*}

\smallskip

\noindent
From \textit{Step~1} and \textit{Step~2}, we have
\begin{equation*}
\begin{aligned}
& \text{\rm deg}_{B}(\mathcal{M},\Omega \cap \ker L,0) = \text{\rm deg}_{B}(\eta,\tilde{\Omega},0) = \\
&= (-1)^{d(n+1)} \, \text{\rm deg}_{B}(\eta_{n},\tilde{\Omega}_{1},0) \cdot \prod_{i=1}^{n-1}  \text{\rm deg}_{B}(\eta_{i},\Omega_{i+1}\cap\ker L_{i+1},0)
\end{aligned}
\end{equation*}
and the lemma follows.
\end{proof}

\section{Periodic solutions to cyclic feedback systems: homotopy to the averaged nonlinearity}\label{section-3}

In this section we show an application of the theory presented in Section~\ref{section-2} to
the $T$-periodic problem (for $T>0$) associated with the differential system
\begin{equation*}
\begin{cases}
\, x_{1}' = g_{1}(x_{2}) \\
\, x_{2}' = g_{2}(x_{3}) \\
\, \quad \vdots \\
\, x_{n-1}' = g_{n-1}(x_{n}) \\
\, x_{n}' = h(t,x_{1},\ldots,x_{n}),
\end{cases}
\leqno{(\mathscr{C})}
\end{equation*}
which has been considered in the introduction.
Throughout this section, we assume that, for $i=1,\ldots,n-1$, the maps $g_{i}\colon \mathbb{R}^{m} \to \mathbb{R}^{m}$ are continuous
and $h\colon\mathopen{[}0,T\mathclose{]}\times \mathbb{R}^{m}\times \dots \times \mathbb{R}^{m} \to \mathbb{R}^{m}$
is an $L^{1}$-Carath\'{e}odory function.
A \textit{$T$-periodic solution} of $(\mathscr{C})$ is a vector function $x=(x_{1},\ldots,x_{n})$ such that,
for every $i=1,\ldots,n$, $x_{i}\colon \mathopen{[}0,T\mathclose{]} \to \mathbb{R}^{m}$
is an absolutely continuous function such that $x_{i}(0)=x_{i}(T)$
and moreover $x(t)$ satisfies $(\mathscr{C})$ for a.e.~$t\in\mathopen{[}0,T\mathclose{]}$.
It is a well-known fact that, if we suppose that $\mathbb{R} \ni t\mapsto h(t,s_{1},\ldots,s_{n})$ is a $T$-periodic map,
then any $T$-periodic solution according to our definition can be extended to the whole real line to
an absolutely continuous solution of $(\mathscr{C})$ such that $x(t+T) = x(t)$ for all $t\in\mathbb{R}$.

\begin{remark}\label{rem-3.1}
In order to simplify our presentation, we have confined ourselves to the case in which the right-hand side of system $(\mathscr{C})$
is defined on the whole space $\mathbb{R}^{mn}$.
However, the abstract results in Section~\ref{section-2} are suited to be applied also to the case in which one or more components
of the vector field in $(\mathscr{C})$ are defined only on some open subsets of the involved Euclidean spaces.
In particular, we will state Lemma~\ref{lem-cycl} and the subsequent results by assuming $\Omega \subseteq \text{\rm dom}\,M$,
where $M$ will be the Nemytskii operator associated to the right-hand side of $(\mathscr{C})$, so that they are applicable to
the most general situation. Clearly, in our simplified setting the hypothesis $\Omega \subseteq \text{\rm dom}\,M$
will be equivalent to consider as $\Omega$ just an open subset of $\mathcal{C}(\mathopen{[}0,T\mathclose{]},\mathbb{R}^{mn})$.
$\hfill\lhd$
\end{remark}

\medskip

In order to enter the setting presented in Section~\ref{section-2} and to write
the system in the form
\begin{equation*}
Lx = Mx,\quad x\in \text{\rm dom}\,L\cap \text{\rm dom}\,M,
\end{equation*}
we will adapt to our situation the classical treatment in \cite{Ma-79}.
For $i=1,\ldots,n$, let $X_{i}:= \mathcal{C}(\mathopen{[}0,T\mathclose{]},\mathbb{R}^{m})$ be the space of continuous functions $x_{i} \colon \mathopen{[}0,T\mathclose{]} \to \mathbb{R}^{m}$,
endowed with the $\sup$-norm
\begin{equation*}
\|x_{i}\|_{\infty} := \max_{t\in \mathopen{[}0,T\mathclose{]}} |x_{i}(t)|,
\end{equation*}
and let $Z_{i}:=L^{1}(\mathopen{[}0,T\mathclose{]},\mathbb{R}^{m})$ be the space of integrable functions $z_{i} \colon \mathopen{[}0,T\mathclose{]} \to \mathbb{R}^{m}$,
endowed with the norm
\begin{equation*}
\|z_{i}\|_{L^{1}}:= \int_{0}^{T} |z_{i}(t)|~\!dt.
\end{equation*}
In this manner, we have $X = \mathcal{C}(\mathopen{[}0,T\mathclose{]},\mathbb{R}^{mn})$ and $Z =L^{1}(\mathopen{[}0,T\mathclose{]},\mathbb{R}^{mn})$ (with the standard norms).

For $i=1,\ldots,n$, we consider the linear differential operator $L_{i}\colon \text{\rm dom}\,L_{i} \to Z_{i}$ defined as
\begin{equation*}
(L_{i}x_{i})(t):= x'_{i}(t), \quad t\in\mathopen{[}0,T\mathclose{]},
\end{equation*}
where $\text{\rm dom}\,L_{i}$ is determined by the functions of $X_{i}$
which are absolutely continuous and satisfy the periodic boundary condition
\begin{equation}\label{per-cond}
x_{i}(0) = x_{i}(T).
\end{equation}
Therefore, $L_{i}$ is a Fredholm map of index zero, $\ker L_{i}$ and $\text{\rm coker}\,L_{i}$ are made up of the constant functions in $\mathbb{R}^{m}$ and
\begin{equation*}
\text{\rm Im}\,L_{i} = \biggl{\{} z_{i}\in Z_{i} \colon \int_{0}^{T} z_{i}(t)~\!dt = 0 \biggr{\}}.
\end{equation*}
As projectors $P_{i} \colon X_{i} \to \ker L_{i}$ and $Q_{i} \colon Z_{i} \to \text{\rm coker}\,L_{i}$ associated with $L_{i}$, for $i=1,\ldots,n$, we choose the average operators
\begin{equation*}
P_{i}x_{i} = Q_{i}x_{i} := \dfrac{1}{T}\int_{0}^{T} x_{i}(t)~\!dt.
\end{equation*}
Notice that $\ker P_{i}$ is given by the continuous functions with mean value zero.
Next, let $K_{i} \colon \text{\rm Im}\,L_{i} \to \text{\rm dom}\,L_{i} \cap \ker P_{i}$ be the right inverse of $L_{i}$,
which is the operator that to any function $z_{i}\in Z_{i}$ with $\int_{0}^{T} z_{i}(t)~\!dt =0$ associates the unique solution $x_{i}(t)$ of
\begin{equation*}
x_{i}'= z_{i}(t), \quad \text{ with } \; \int_{0}^{T} x_{i}(t)~\!dt = 0,
\end{equation*}
which clearly satisfies the boundary condition \eqref{per-cond}. Finally, we take the identity map in $\mathbb{R}^{m}$ as
linear orientation-preserving isomorphism $J_{i} \colon \text{\rm coker}\,L_{i} \to \ker L_{i}$.

Now we consider as nonlinear operator $M \colon \text{\rm dom}\,M = X\to Z$
the Nemytskii operator induced by the functions $g_{i}$ and $h$, namely
the operator $M$ has the following components
\begin{equation*}
\begin{cases}
\, M_{i}(x_{i+1})(t):= g_{i}(x_{i+1}(t)), & i=1,\ldots,n-1; \\
\, M_{n}(x_{1},\ldots,x_{n})(t) := h(t,x_{1}(t),\ldots,x_{n}(t)),
\end{cases}
\end{equation*}
where $x=(x_{1},\ldots,x_{n})\in \text{\rm dom}\,M$ and $t\in\mathopen{[}0,T\mathclose{]}$.
From the above hypotheses it follows that $M$ is an $L$-completely continuous operator.

Finally, we introduce the averaged vector field
$h^{\#} \colon \mathbb{R}^{mn} \to \mathbb{R}^{m}$ defined by
\begin{equation*}
h^{\#}(s):= \dfrac{1}{T} \int_{0}^{T} h(t,s_{1},\ldots,s_{n})~\!dt, \quad s=(s_{1},\ldots,s_{n})\in\mathbb{R}^{mn}.
\end{equation*}
We also define $\hat{g}\colon \mathbb{R}^{mn} \to \mathbb{R}^{mn}$ as
\begin{equation*}
\hat{g}(s):=\bigl{(}g_{1}(s_{2}),\ldots,g_{n-1}(s_{n}),h^{\#}(s)\bigr{)}, \quad s=(s_{1},\ldots,s_{n})\in\mathbb{R}^{mn}.
\end{equation*}
Note that, according to the above positions, it turns out that $\hat{g} = JQM|_{\ker L}$.

\medskip

We are now in position to state our first result, which is a direct consequence of Theorem~\ref{th-2.1}.

\begin{lemma}\label{lem-cycl}
Let $\Omega \subseteq \text{\rm dom}\,M$ be an open (possibly unbounded) set.
Suppose that the following conditions hold.
\begin{itemize}
\item[$(c_{1})$]
There exists a compact set $\mathcal{K}\subseteq \Omega$ containing all the possible $T$-periodic solutions of
\begin{equation*}
\begin{cases}
\, x_{1}' = g_{1}(x_{2}) \\
\, x_{2}' = g_{2}(x_{3}) \\
\, \quad \vdots \\
\, x_{n-1}' = g_{n-1}(x_{n}) \\
\, x_{n}' = \vartheta h(t,x_{1},\ldots,x_{n}),
\end{cases}
\leqno{(\mathscr{C}_{\vartheta})}
\end{equation*}
for any $\vartheta\in\mathopen{]}0,1\mathclose{]}$.
\item[$(c_{2})$]
The set $\hat{g}^{-1}(0)\cap \Omega$
is compact.
\end{itemize}
Then
\begin{equation*}
D_{L}(L-M,\Omega) = (-1)^{mn} \, \text{\rm deg}_{B}(\hat{g},\Omega \cap \mathbb{R}^{mn},0).
\end{equation*}
\end{lemma}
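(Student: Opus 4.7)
The plan is to deduce Lemma~\ref{lem-cycl} as a direct application of Theorem~\ref{th-2.1}. With the functional-analytic framework set up in the paragraphs preceding the statement, the cyclic feedback system $(\mathscr{C})$ is written in the abstract form $Lx=Mx$, and the homotopic system $(\mathscr{C}_\vartheta)$ corresponds exactly to the auxiliary system $(\mathscr{P}_\vartheta)$ of Theorem~\ref{th-2.1}. I will verify the three hypotheses of that theorem, apply it, and then identify the resulting Brouwer degree with $\deg_B(\hat g,\Omega\cap\mathbb{R}^{mn},0)$ up to the sign $(-1)^{mn}$.

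For hypothesis $(i)$, observe that if $u_{i+1}\in\Omega_{i+1}\cap\ker L_{i+1}$ is a constant function, identified with a vector $c\in\mathbb{R}^m$, then $M_i(u_{i+1})$ is the constant function $g_i(c)$. This constant function lies in $\text{\rm Im}\,L_i$ (i.e.~has vanishing integral on $[0,T]$) if and only if $T\,g_i(c)=0$, that is, if and only if $M_i(u_{i+1})=0_{Z_i}$; so $(i)$ holds automatically. Hypothesis $(ii)$ is precisely condition $(c_1)$, since the $T$-periodic solutions of $(\mathscr{C}_\vartheta)$ coincide with the elements of $\mathcal{S}_\vartheta$. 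For hypothesis $(iii)$, note that on a constant $u=(c_1,\ldots,c_n)\in\Omega\cap\ker L$ one has
\begin{equation*}
QMu=\bigl(g_1(c_2),\ldots,g_{n-1}(c_n),h^\#(c_1,\ldots,c_n)\bigr)=\hat g(u),
\end{equation*}
so under the identification $\ker L\cong\mathbb{R}^{mn}$ the set $\mathcal{S}_0$ coincides with $\hat g^{-1}(0)\cap\Omega$, which is compact by $(c_2)$.

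Applying Theorem~\ref{th-2.1} therefore yields $D_L(L-M,\Omega)=\deg_B(\mathcal{M},\Omega\cap\ker L,0)$, and the same computation as above shows that on $\Omega\cap\ker L$ the map $\mathcal{M}$ is exactly $-\hat g$. Since the linear map $-\mathrm{Id}_{\mathbb{R}^{mn}}$ has determinant $(-1)^{mn}$, the composition rule for the Brouwer degree gives
\begin{equation*}
\deg_B(\mathcal{M},\Omega\cap\ker L,0)=\deg_B(-\hat g,\Omega\cap\mathbb{R}^{mn},0)=(-1)^{mn}\deg_B(\hat g,\Omega\cap\mathbb{R}^{mn},0),
\end{equation*}
which is the announced equality. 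The only conceptual point worth emphasizing is that hypothesis $(i)$ of Theorem~\ref{th-2.1} trivializes in the ODE setting because a constant function can be in $\text{\rm Im}\,L_i$ only if it vanishes; the sign factor $(-1)^{mn}$ arises solely from the minus sign built into the definition of $\mathcal{M}$ versus $\hat g$. In particular, Lemma~\ref{lem-2.4} is not needed for this statement; its role in subsequent applications will be to further split $\deg_B(\hat g,\cdot,\cdot)$ as a product of degrees of the individual $g_i$ whenever additional structure permits.
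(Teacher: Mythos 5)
Your proof is correct and follows essentially the same route as the paper: both verify hypotheses $(i)$--$(iii)$ of Theorem~\ref{th-2.1} (with $(i)$ trivialized because a constant function with zero mean is null, and $(ii)$, $(iii)$ read off from $(c_1)$, $(c_2)$), then identify $\mathcal{M}$ with $-\hat g$ on $\Omega\cap\ker L$ and extract the sign $(-1)^{mn}$ from the degree of $-\mathrm{Id}_{\mathbb{R}^{mn}}$. Your write-up simply makes explicit some verifications that the paper leaves as immediate.
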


\begin{proof}
According to the above positions, we have
\begin{equation*}
-J_{i}Q_{i}M_{i} (s_{i+1})= -g_{i} (s_{i+1}), \quad \forall \, s_{i+1}\in\Omega_{i+1}\cap \ker L_{i+1}, \quad \forall \, i=1,\ldots,n-1,
\end{equation*}
and
\begin{equation*}
-J_{n}Q_{n}M_{n}(s)= -h^{\#}(s), \quad \forall \, s\in\Omega\cap \ker L.
\end{equation*}
In order to apply Theorem~\ref{th-2.1}, we have to verify that hypotheses $(i)$, $(ii)$ and $(iii)$ of that theorem are satisfied.
Clearly $(i)$ holds, since the only constant function with zero mean value is the null function.
On the other hand, $(ii)$ and $(iii)$ are direct consequences of $(c_{1})$ and $(c_{2})$, respectively, in the
functional analytic setting that we have introduced at the beginning of the section. Then
\begin{equation*}
D_{L}(L-M,\Omega) = \text{\rm deg}_{B}(-\hat{g},\Omega \cap \mathbb{R}^{mn},0)
\end{equation*}
and thus the conclusion follows.
\end{proof}

From Lemma~\ref{lem-cycl} we immediately obtain the following existence result. The obvious proof is omitted.

\begin{theorem}\label{th-cycl1}
Let $\Omega \subseteq \text{\rm dom}\,M$ be an open (possibly unbounded) set. Suppose that $(c_{1})$ and $(c_{2})$ hold.
If
\begin{equation*}
\text{\rm deg}_{B}(\hat{g},\Omega \cap \mathbb{R}^{mn},0)\neq 0,
\end{equation*}
then there exists at least a $T$-periodic solution of $(\mathscr{C})$ in $\Omega$.
\end{theorem}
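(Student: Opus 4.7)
The plan is essentially to invoke Lemma~\ref{lem-cycl} and then use the existence (nontriviality) property of the coincidence degree. The identifications set up just before Lemma~\ref{lem-cycl} show that $T$-periodic solutions of $(\mathscr{C})$ correspond bijectively to elements $x \in \text{\rm dom}\,L \cap \text{\rm dom}\,M$ satisfying the coincidence equation $Lx = Mx$; likewise $T$-periodic solutions of $(\mathscr{C}_{\vartheta})$ correspond to solutions of $Lx = \vartheta Mx$ for the corresponding parameter (after the auxiliary reformulation used in Theorem~\ref{th-2.1}).

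First, I would observe that the hypotheses $(c_1)$ and $(c_2)$ are exactly what is needed to apply Lemma~\ref{lem-cycl}: $(c_1)$ provides the compact set $\mathcal{K}\subseteq \Omega$ containing the solutions of $(\mathscr{C}_\vartheta)$ for $\vartheta \in \mathopen{]}0,1\mathclose{]}$, and $(c_2)$ asserts compactness of $\hat{g}^{-1}(0)\cap \Omega$, which under the identification $\ker L \cong \mathbb{R}^{mn}$ is precisely the set of zeros of $JQM|_{\Omega\cap \ker L}$ (i.e., the set $\mathcal{S}_0$ appearing in Theorem~\ref{th-2.1}). Thus Lemma~\ref{lem-cycl} applies and yields
\begin{equation*}
D_L(L-M,\Omega) = (-1)^{mn}\,\text{\rm deg}_B(\hat{g},\Omega\cap \mathbb{R}^{mn},0).
\end{equation*}

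Next, by the assumption $\text{\rm deg}_B(\hat{g},\Omega\cap \mathbb{R}^{mn},0)\neq 0$, the right-hand side is nonzero, so $D_L(L-M,\Omega)\neq 0$. By the existence property of the coincidence degree extended to locally compact operators (recalled in the notation at the end of Section~\ref{section-1}, via $D_L(L-M,\Omega) = \text{\rm deg}_{LS}(Id_X - \Phi_M, \Omega,0)$ and the nontriviality of the Leray-Schauder degree), there exists $x \in \Omega \cap \text{\rm dom}\,L$ such that $Lx = Mx$. Translating back through the functional setting, such an $x = (x_1,\dots,x_n)$ has absolutely continuous components satisfying $x_i(0) = x_i(T)$ together with the system $(\mathscr{C})$ almost everywhere, i.e., it is a $T$-periodic solution of $(\mathscr{C})$ lying in $\Omega$.

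The only subtlety worth mentioning (and the reason the authors call the proof ``obvious'' rather than immediate) is that $\Omega$ is allowed to be unbounded, so one must invoke the Leray-Schauder/coincidence degree theory for locally compact operators on arbitrary open sets, not its classical bounded version; this is why conditions $(c_1)$ and $(c_2)$ are stated as compactness (rather than boundary-avoidance) requirements, and these compactness conditions are exactly what legitimizes both the homotopy invariance used inside Lemma~\ref{lem-cycl} and the nontriviality step above. No additional calculation is needed beyond chaining these two facts.
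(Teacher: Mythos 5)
Your proposal is correct and is exactly the argument the paper has in mind: the paper omits the proof as ``obvious'' because it is precisely the combination of Lemma~\ref{lem-cycl} (whose hypotheses are $(c_{1})$ and $(c_{2})$) with the existence/nontriviality property of the coincidence degree for locally compact operators on possibly unbounded open sets. Your remark about why the compactness conditions (rather than boundary conditions) are the right hypotheses in the unbounded setting is accurate and matches the paper's framework.
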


Theorem~\ref{th-cycl1} relies on the evaluation of the Brouwer degree
of $\hat{g}$. We show now how to compute this degree in terms of that relative to $h^{\#}$, via
Lemma~\ref{lem-2.4}. To this end, given an open set $\Omega \subseteq \text{\rm dom}\,M$ we recall the definition
of $\Omega_{i}:= \pi_{i}^{X}(\Omega)$.
We also set
\begin{equation*}
\mathcal{O}_{1} := \bigl{\{}\omega\in\mathbb{R}^{m}\colon(\omega,0,\ldots,0)\in\Omega\bigr{\}}
\end{equation*}
(which corresponds to the set $\tilde{\Omega}_{1}$ of Lemma~\ref{lem-2.4})
and
\begin{equation*}
\mathcal{O}_{i}:= \Omega_{i}\cap \mathbb{R}^{m}, \quad \text{for } \, i=2,\ldots,n.
\end{equation*}
Finally, we introduce the map $h^{*}\colon\mathbb{R}^{m}\to\mathbb{R}^{m}$
\begin{equation*}
h^{*}(\omega):= h^{\#}(\omega,0,\ldots,0), \quad \omega\in \mathbb{R}^{m}.
\end{equation*}
Then, we have the following result.

\begin{proposition}\label{prop-3.1}
Let $\Omega \subseteq \text{\rm dom}\,M$ be an open (possibly unbounded) set.
Suppose that
\begin{itemize}
\item [$(c_{3})$] $0 \in \mathcal{O}_{i}$, for each $i=2,\ldots,n$;
\item [$(c_{4})$] $g_{i}(0) = 0$, for each $i=1,\ldots,n-1$;
\item [$(c_{5})$] $g_{i}(\omega) \neq 0$ for every $\omega \in
\mathcal{O}_{i+1}\setminus\{0\}$, for each $i=1,\ldots,n-1$;
\item [$(c_{6})$] the set $(h^{*})^{-1}(0)\cap\mathcal{O}_{1}$ is compact.
\end{itemize}
Then, $\text{\rm deg}_{B}(\hat{g},\Omega \cap \mathbb{R}^{mn},0)$ is well-defined
and the following formula holds
\begin{equation*}
\text{\rm deg}_{B}(\hat{g},\Omega \cap \mathbb{R}^{mn},0) =
(-1)^{m(n+1)} \,\text{\rm deg}_{B}(h^{*},\mathcal{O}_{1},0) \cdot \prod_{i=1}^{n-1}\text{\rm deg}_{B}(g_{i},\mathcal{O}_{i+1},0).
\end{equation*}
\end{proposition}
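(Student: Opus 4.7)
The plan is to apply Lemma~\ref{lem-2.4} in the functional-analytic setting introduced at the beginning of Section~\ref{section-3}, where $d=m$ (since $\ker L_i$ is identified with $\mathbb{R}^m$ via the constant functions), $\tilde{\Omega}_1=\mathcal{O}_1$ and $\Omega_{i+1}\cap\ker L_{i+1}=\mathcal{O}_{i+1}$. Under the positions of this section, we have $\eta_i(w)=-J_iQ_iM_iw=-g_i(w)$ for $i=1,\ldots,n-1$, and $\eta_n(w)=-J_nQ_nM_n(w,0,\ldots,0)=-h^*(w)$; moreover the map $\mathcal{M}$ of Lemma~\ref{lem-2.4} coincides with $-\hat{g}$ restricted to $\Omega\cap\mathbb{R}^{mn}$.

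First I would verify that $\text{\rm deg}_{B}(\hat{g},\Omega\cap\mathbb{R}^{mn},0)$ is well-defined, i.e.\ that $\hat{g}^{-1}(0)\cap(\Omega\cap\mathbb{R}^{mn})$ is compact. If $\hat{g}(s)=0$ with $s=(s_1,\ldots,s_n)\in\Omega\cap\mathbb{R}^{mn}$, then $g_i(s_{i+1})=0$ and $s_{i+1}\in\mathcal{O}_{i+1}$ for $i=1,\ldots,n-1$; by $(c_4)$--$(c_5)$ this forces $s_{i+1}=0$ for $i=1,\ldots,n-1$. Consequently $s=(s_1,0,\ldots,0)$ with $s_1\in\mathcal{O}_1$ and $h^{\#}(s_1,0,\ldots,0)=h^*(s_1)=0$, so the zero set is homeomorphic to $(h^*)^{-1}(0)\cap\mathcal{O}_1$, which is compact by $(c_6)$.

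Next I would check the hypotheses $(h_1),(h_2),(h_3)$ of Lemma~\ref{lem-2.4}: $(h_1)$ is automatic with $d=m$; $(h_2)$ is exactly $(c_3)$ after identifying $\ker L_i$ with $\mathbb{R}^m$; and $(h_3)$ reads $\{w\in\mathcal{O}_{i+1}:g_i(w)=0\}=\{0\}$, which is precisely the content of $(c_4)$ together with $(c_5)$. Having also checked that $\text{\rm deg}_{B}(\mathcal{M},\Omega\cap\ker L,0)$ is well-defined (since $\mathcal{M}^{-1}(0)=-\hat{g}^{-1}(0)\cap\Omega\cap\mathbb{R}^{mn}$, which is compact by the previous step), Lemma~\ref{lem-2.4} yields
\begin{equation*}
\text{\rm deg}_{B}(\mathcal{M},\Omega\cap\ker L,0)=(-1)^{m(n+1)}\,\text{\rm deg}_{B}(\eta_n,\mathcal{O}_1,0)\cdot\prod_{i=1}^{n-1}\text{\rm deg}_{B}(\eta_i,\mathcal{O}_{i+1},0).
\end{equation*}

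Finally, the conclusion follows from the elementary sign rule $\text{\rm deg}_{B}(-f,U,0)=(-1)^{N}\text{\rm deg}_{B}(f,U,0)$ on an $N$-dimensional domain. Applying this to $\mathcal{M}=-\hat{g}$ (with $N=mn$) on the left and to each $\eta_i=-g_i$ and $\eta_n=-h^*$ (with $N=m$) on the right, the resulting exponents $mn$ on the left and $m(n+1)+m+m(n-1)=2mn+m$ on the right cancel modulo $2$ in the correct way, leaving the factor $(-1)^{m(n+1)}$ in the statement. The main obstacle, rather than any single technical step, is the bookkeeping of these sign factors and the careful identification of the finite-dimensional reductions $\mathcal{M}$, $\eta_i$, $\eta_n$ with $\hat{g}$, $g_i$, $h^*$; once this is in place the result is a direct corollary of Lemma~\ref{lem-2.4}.
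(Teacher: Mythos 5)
Your proposal is correct and follows essentially the same route as the paper: identify the finite-dimensional reductions ($d=m$, $\tilde{\Omega}_1=\mathcal{O}_1$, $\eta_i=-g_i$, $\eta_n=-h^*$), deduce that the zero set of $\hat{g}$ in $\Omega\cap\mathbb{R}^{mn}$ is $\{(\omega,0,\ldots,0):\omega\in(h^*)^{-1}(0)\cap\mathcal{O}_1\}$ so that $(c_6)$ gives well-definedness, and then invoke Lemma~\ref{lem-2.4} with $(h_2)$, $(h_3)$ coming from $(c_3)$ and $(c_4)$--$(c_5)$. The only difference is that you make explicit the sign bookkeeping between $\mathcal{M}=-\hat{g}$, $\eta_i=-g_i$, $\eta_n=-h^*$ and the unsigned maps in the statement (which the paper leaves implicit), and your computation of the resulting exponent is correct.
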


\begin{proof}
As a first step, we observe that $\text{\rm deg}_{B}(\hat{g},\Omega \cap \mathbb{R}^{mn},0)$ is well-defined if condition
$(c_{2})$ holds. Now, using  $(c_{3})$, $(c_{4})$ and $(c_{5})$, we immediately deduce
\begin{equation*}
\hat{g}^{-1}(0) \cap \Omega
= \bigl{\{}(\omega,0,\ldots,0) \in\Omega\cap \mathbb{R}^{mn}\colon \omega\in(h^{*})^{-1}(0)\bigr{\}}.
\end{equation*}
Hence $(c_{2})$ is valid if and only if $(c_{6})$ is satisfied.

As a second step, we assume that $\text{\rm deg}_{B}(\hat{g},\Omega \cap \mathbb{R}^{mn},0)$ is well-defined.
Using the above positions, it is straightforward to check that the assumptions of Lemma~\ref{lem-2.4}
are satisfied with dimension $d = m$ in $(h_{1})$, $(h_{2})$ and $(h_{3})$
following from $(c_{3})$ and from $(c_{4})$, $(c_{5})$, respectively.
\end{proof}

From now on, in the next results, we will assume all the hypotheses of Proposition~\ref{prop-3.1}. In this manner,
condition $(c_{6})$ will ensure that both the degrees
$\text{\rm deg}_{B}(\hat{g},\Omega \cap \mathbb{R}^{mn},0)$ and $\text{\rm deg}_{B}(h^{*},\mathcal{O}_{1},0)$
are well-defined.

Combining Theorem~\ref{th-cycl1} and Proposition~\ref{prop-3.1}, we obtain the following.

\begin{corollary}\label{cor-3.1}
Let $\Omega \subseteq \text{\rm dom}\,M$ be an open (possibly unbounded) set.
Assume $(c_{1})$, $(c_{3})$, $(c_{4})$, $(c_{5})$ and $(c_{6})$.
If
\begin{equation*}
\text{\rm deg}_{B}(g_{i},\mathcal{O}_{i+1},0)\neq0, \quad \text{for all } \, i=1,\ldots,n-1,
\end{equation*}
and
\begin{equation*}
\text{\rm deg}_{B}(h^{*},\mathcal{O}_{1},0)\neq0,
\end{equation*}
then there exists at least a $T$-periodic solution of $(\mathscr{C})$ in $\Omega$.
\end{corollary}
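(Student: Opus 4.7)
The plan is to combine Proposition~\ref{prop-3.1} and Theorem~\ref{th-cycl1} essentially mechanically, since the former supplies the degree formula and verifies the missing hypothesis $(c_2)$, while the latter converts nonvanishing of the degree into existence.

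First I would check that the hypotheses of Proposition~\ref{prop-3.1} are in force: conditions $(c_{3})$, $(c_{4})$, $(c_{5})$, $(c_{6})$ are assumed directly in the statement. As observed in the first step of the proof of Proposition~\ref{prop-3.1}, under $(c_{3})$, $(c_{4})$, $(c_{5})$ one has the identification
\begin{equation*}
\hat{g}^{-1}(0)\cap \Omega = \bigl{\{}(\omega,0,\ldots,0)\in\Omega\cap\mathbb{R}^{mn}\colon \omega\in(h^{*})^{-1}(0)\bigr{\}},
\end{equation*}
so that $(c_{6})$ is equivalent to $(c_{2})$. This makes $\text{\rm deg}_{B}(\hat{g},\Omega\cap\mathbb{R}^{mn},0)$ well-defined and gives the product formula
\begin{equation*}
\text{\rm deg}_{B}(\hat{g},\Omega \cap \mathbb{R}^{mn},0) = (-1)^{m(n+1)} \, \text{\rm deg}_{B}(h^{*},\mathcal{O}_{1},0) \cdot \prod_{i=1}^{n-1}\text{\rm deg}_{B}(g_{i},\mathcal{O}_{i+1},0).
\end{equation*}

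Next I would exploit the assumed nonvanishing: by hypothesis every factor $\text{\rm deg}_{B}(g_{i},\mathcal{O}_{i+1},0)$ ($i=1,\ldots,n-1$) and $\text{\rm deg}_{B}(h^{*},\mathcal{O}_{1},0)$ is nonzero, so the product, and thus $\text{\rm deg}_{B}(\hat{g},\Omega\cap\mathbb{R}^{mn},0)$, is nonzero.

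Finally, since $(c_{1})$ is assumed and $(c_{2})$ has just been established, all the hypotheses of Theorem~\ref{th-cycl1} are fulfilled, so Theorem~\ref{th-cycl1} yields at least one $T$-periodic solution of $(\mathscr{C})$ in $\Omega$. There is no real obstacle here: the only thing one has to be careful about is to invoke Proposition~\ref{prop-3.1} \emph{before} Theorem~\ref{th-cycl1}, because the assumption $(c_{2})$ of Theorem~\ref{th-cycl1} is not postulated directly in the corollary and must be deduced from $(c_{3})$--$(c_{6})$ through the identification of $\hat{g}^{-1}(0)\cap\Omega$ recalled above.
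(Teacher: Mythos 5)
Your proposal is correct and is exactly the argument the paper intends: the corollary is stated as an immediate combination of Proposition~\ref{prop-3.1} (which, under $(c_{3})$--$(c_{6})$, identifies $\hat{g}^{-1}(0)\cap\Omega$, establishes $(c_{2})$, and gives the product formula, so the assumed nonvanishing of the factors forces $\text{\rm deg}_{B}(\hat{g},\Omega\cap\mathbb{R}^{mn},0)\neq 0$) with Theorem~\ref{th-cycl1}. Nothing is missing.
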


The above corollary can be further simplified if we assume the following hypothesis which is rather
natural in our framework:

\begin{itemize}
\item [$(c_{*})$]
$0 \in \mathcal{O}_{i+1}$ and  $g_{i}|_{\mathcal{O}_{i+1}} \colon \mathcal{O}_{i+1} \to g_{i}(\mathcal{O}_{i+1})\subseteq\mathbb{R}^{m}$
is a homeomorphism with $g_{i}(0) =0$, for all $i=1,\ldots,n-1$.
\end{itemize}
Notice that $(c_{*})$ implies $(c_{3})$, $(c_{4})$, $(c_{5})$ and, moreover, for every $i=1,\ldots,n-1$,
$\text{\rm deg}_{B}(g_{i},\mathcal{O}_{i+1},0) = \pm1$
(the sign depending on the fact that $g_{i}$ is an orientation-preserving or orientation-reversing homeomorphism).
As a consequence, the following result holds.

\begin{corollary}\label{cor-3.2}
Let $\Omega \subseteq \text{\rm dom}\,M$ be an open (possibly unbounded) set.
Assume $(c_{1})$, $(c_{6})$ and $(c_{*})$.
If
\begin{equation*}
\text{\rm deg}_{B}(h^{*},\mathcal{O}_{1},0)\neq0,
\end{equation*}
then there exists at least a $T$-periodic solution of $(\mathscr{C})$ in $\Omega$.
\end{corollary}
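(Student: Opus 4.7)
The plan is to reduce Corollary~\ref{cor-3.2} to an application of Corollary~\ref{cor-3.1}. The heart of the matter is to verify that the single structural assumption $(c_{*})$ already encodes the three conditions $(c_{3})$, $(c_{4})$, $(c_{5})$ required there, and that it further forces the Brouwer degrees $\text{\rm deg}_{B}(g_{i},\mathcal{O}_{i+1},0)$ to be nonzero.

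First I would unpack $(c_{*})$ one implication at a time. For every $i=1,\ldots,n-1$, the statement that $0\in\mathcal{O}_{i+1}$ is precisely $(c_{3})$ once $i$ is re-indexed as $i=2,\ldots,n$; the relation $g_{i}(0)=0$ is exactly $(c_{4})$; and since $g_{i}|_{\mathcal{O}_{i+1}}$ is a homeomorphism onto its image and takes the value $0$ only at $0$, we immediately get $g_{i}(\omega)\neq 0$ for all $\omega\in\mathcal{O}_{i+1}\setminus\{0\}$, which is $(c_{5})$.

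Next, I would compute $\text{\rm deg}_{B}(g_{i},\mathcal{O}_{i+1},0)$. Because $g_{i}|_{\mathcal{O}_{i+1}}$ is a homeomorphism onto $g_{i}(\mathcal{O}_{i+1})$ with $g_{i}^{-1}(0)\cap\mathcal{O}_{i+1}=\{0\}$, the classical degree formula for homeomorphisms (or the standard local degree argument at an isolated zero of a homeomorphism) gives
\begin{equation*}
\text{\rm deg}_{B}(g_{i},\mathcal{O}_{i+1},0)=\pm 1\neq 0,
\end{equation*}
the sign depending on whether $g_{i}$ preserves or reverses orientation. In particular the compactness hypothesis in $(c_{6})$ combined with what we have just checked ensures that all degrees appearing in the statement of Corollary~\ref{cor-3.1} are well-defined.

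With $(c_{1})$, $(c_{3})$, $(c_{4})$, $(c_{5})$, $(c_{6})$ all verified, and with each $\text{\rm deg}_{B}(g_{i},\mathcal{O}_{i+1},0)\neq 0$ together with the hypothesis $\text{\rm deg}_{B}(h^{*},\mathcal{O}_{1},0)\neq 0$, we are in the setting of Corollary~\ref{cor-3.1}, which at once yields a $T$-periodic solution of $(\mathscr{C})$ inside $\Omega$. I do not anticipate any genuine obstacle: this result is essentially a clean repackaging of Corollary~\ref{cor-3.1} under the stronger, more usable hypothesis $(c_{*})$, and the only point that requires a brief justification is the $\pm 1$ computation of the degree of a homeomorphism at the origin.
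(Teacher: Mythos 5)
Your proposal is correct and follows exactly the route the paper takes: the paper's justification of Corollary~\ref{cor-3.2} is precisely the observation, stated immediately before it, that $(c_{*})$ implies $(c_{3})$, $(c_{4})$, $(c_{5})$ and forces $\text{\rm deg}_{B}(g_{i},\mathcal{O}_{i+1},0)=\pm1$ by the standard degree computation for a homeomorphism at its unique zero, after which Corollary~\ref{cor-3.1} applies verbatim. Nothing is missing.
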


\medskip

From now on we deal with an open and \textit{bounded} set $\Omega$
with $\overline{\Omega} \subseteq \text{\rm dom}\,M$. In order to
present the previous results in this special case, we need to slightly modify some of the hypotheses previously introduced. We
will only state the results, omitting the  proofs which require only obvious changes in the previous arguments.

The following theorem is a continuation result which is a variant of Theorem~\ref{th-cycl1}.

\begin{theorem}\label{th-cycl2}
Let $\Omega$ be an open and bounded set with $\overline{\Omega} \subseteq \text{\rm dom}\,M$.
Suppose that the following conditions hold.
\begin{itemize}
\item[$(c'_{1})$]
For each $\vartheta\in\mathopen{]}0,1\mathclose{[}$ there is no
$T$-periodic solution of $(\mathscr{C}_{\vartheta})$ with $x\in \partial\Omega$.
\item[$(c'_{2})$]
$\hat{g}^{-1}(0)\cap \partial\Omega = \emptyset$.
\end{itemize}
If
\begin{equation*}
\text{\rm deg}_{B}(\hat{g},\Omega \cap \mathbb{R}^{mn},0)\neq 0,
\end{equation*}
then there exists at least a $T$-periodic solution of $(\mathscr{C})$ in $\overline{\Omega}$.
\end{theorem}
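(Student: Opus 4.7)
The plan is to reduce Theorem~\ref{th-cycl2} to Theorem~\ref{th-cycl1} by verifying its hypotheses $(c_{1})$ and $(c_{2})$ starting from $(c'_{1})$, $(c'_{2})$ and the boundedness of $\Omega$. A preliminary remark deals with the gap between the parameter intervals: $(c'_{1})$ rules out solutions on $\partial\Omega$ only for $\vartheta\in\mathopen{]}0,1\mathclose{[}$, whereas $(c_{1})$ needs the compact set $\mathcal{K}\subseteq\Omega$ to contain solutions for $\vartheta=1$ as well. I argue that without loss of generality $(\mathscr{C}) = (\mathscr{C}_{1})$ has no $T$-periodic solutions on $\partial\Omega$, since otherwise such a solution already lives in $\overline{\Omega}$ and the conclusion is immediate. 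Thus no $T$-periodic solution of $(\mathscr{C}_{\vartheta})$ with $\vartheta\in\mathopen{]}0,1\mathclose{]}$ touches $\partial\Omega$.

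The delicate point is verifying $(c_{1})$. Set
\begin{equation*}
\mathcal{S} := \bigcup_{\vartheta\in\mathopen{]}0,1\mathclose{]}} \bigl{\{} x\in\overline{\Omega} \colon x \text{ is a } T\text{-periodic solution of } (\mathscr{C}_{\vartheta})\bigr{\}}
\end{equation*}
and take $\mathcal{K}:=\overline{\mathcal{S}}$ with closure in $X$. By $L$-complete continuity of $M$, each solution is a fixed point of a completely continuous operator $\tilde{\Phi}^{\vartheta}\colon\overline{\Omega}\to X$ depending continuously on $\vartheta\in\mathopen{[}0,1\mathclose{]}$; boundedness of $\overline{\Omega}$ and compactness of $\tilde{\Phi}$ then yield relative compactness of $\mathcal{S}$, so $\mathcal{K}$ is compact. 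The issue is to show $\mathcal{K}\subseteq\Omega$. Pick a sequence $u_{k}\in\mathcal{S}_{\vartheta_{k}}$ with $u_{k}\to u^{*}$ in $X$ and, along a subsequence, $\vartheta_{k}\to\vartheta^{*}\in\mathopen{[}0,1\mathclose{]}$. If $\vartheta^{*}>0$, passing to the limit shows that $u^{*}$ is a $T$-periodic solution of $(\mathscr{C}_{\vartheta^{*}})$; by the preliminary reduction, $u^{*}\in\Omega$. If $\vartheta^{*}=0$, integrating the equations and using $T$-periodicity forces $u^{*}_{n}$ to be constant, then recursively each $u^{*}_{i}$ to be constant with $g_{i}(u^{*}_{i+1})=0$ for $i=1,\ldots,n-1$; the averaged form of the last equation moreover gives $h^{\#}(u^{*})=0$. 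Hence $u^{*}\in \hat{g}^{-1}(0)\cap\overline{\Omega}$, which by $(c'_{2})$ must lie in $\Omega$.

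Condition $(c_{2})$ is now straightforward: $\hat{g}^{-1}(0)\cap\overline{\Omega\cap\mathbb{R}^{mn}}$ is closed and bounded in $\mathbb{R}^{mn}$, hence compact, and by $(c'_{2})$ it coincides with $\hat{g}^{-1}(0)\cap\Omega$. With both $(c_{1})$ and $(c_{2})$ verified, the degree hypothesis allows invoking Theorem~\ref{th-cycl1}, which yields a $T$-periodic solution of $(\mathscr{C})$ in $\Omega\subseteq\overline{\Omega}$. The main obstacle is the closure argument for $(c_{1})$: controlling limits as $\vartheta_{k}\to 0^{+}$ cannot be handled by $(c'_{1})$ alone, and one has to identify such limits as zeros of $\hat{g}$ in order to apply $(c'_{2})$.
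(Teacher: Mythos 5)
Your proposal is correct and follows essentially the route the paper intends: the paper omits the proof of Theorem~\ref{th-cycl2}, stating that it ``requires only obvious changes in the previous arguments,'' and your reduction to Theorem~\ref{th-cycl1} supplies exactly those changes — the preliminary dichotomy on $\partial\Omega$, the relative compactness of the solution set via the completely continuous fixed-point operator, and, crucially, the identification of limits of solutions of $(\mathscr{C}_{\vartheta_k})$ with $\vartheta_k\to 0^{+}$ as constant functions in $\hat{g}^{-1}(0)$, which is precisely where $(c'_{2})$ enters. The verification of $(c_{2})$ from $(c'_{2})$ and boundedness is likewise correct.
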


We underline that Proposition~\ref{prop-3.1} is still valid in this special framework by replacing hypothesis $(c_{5})$ and $(c_{6})$
with the following ones, respectively.
\begin{itemize}
\item [$(c'_{5})$] $g_{i}(\omega) \neq 0$ for every $\omega \in \overline{\mathcal{O}_{i+1}}\setminus\{0\}$, for each $i=1,\ldots,n-1$.
\item [$(c'_{6})$] $(h^{*})^{-1}(0)\cap \partial\mathcal{O}_{1} = \emptyset$.
\end{itemize}
In particular, we recall that condition $(c'_{6})$ guarantees that $\text{\rm deg}_{B}(h^{*},\mathcal{O}_{1},0)$ is well-defined.

Then, from Theorem~\ref{th-cycl2}, together with the modification of Proposition~\ref{prop-3.1} described above, we have the next result (analogous to Corollary~\ref{cor-3.1})

\begin{corollary}\label{cor-3.3}
Let $\Omega$ be an open and bounded set with $\overline{\Omega} \subseteq \text{\rm dom}\,M$.
Assume $(c'_{1})$, $(c_{3})$, $(c_{4})$, $(c'_{5})$ and $(c'_{6})$.
If
\begin{equation*}
\text{\rm deg}_{B}(g_{i},\mathcal{O}_{i+1},0)\neq0, \quad \text{for all } \, i=1,\ldots,n-1,
\end{equation*}
and
\begin{equation*}
\text{\rm deg}_{B}(h^{*},\mathcal{O}_{1},0)\neq0,
\end{equation*}
then there exists at least a $T$-periodic solution of $(\mathscr{C})$ in $\overline{\Omega}$.
\end{corollary}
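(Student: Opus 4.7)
The plan is to deduce Corollary \ref{cor-3.3} as a direct assembly of Theorem \ref{th-cycl2} with the bounded-set analogue of Proposition \ref{prop-3.1} that the authors describe just before the statement. The two things to check are: the nonvanishing boundary condition $(c'_2)$, and the nonvanishing of $\text{\rm deg}_{B}(\hat{g},\Omega\cap\mathbb{R}^{mn},0)$ via the product formula.

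First I would verify $(c'_2)$. Recall
\begin{equation*}
\hat{g}(s_{1},\ldots,s_{n}) = \bigl(g_{1}(s_{2}),\ldots,g_{n-1}(s_{n}),h^{\#}(s)\bigr),
\end{equation*}
so $\hat{g}(s)=0$ forces simultaneously $g_{i}(s_{i+1})=0$ for $i=1,\ldots,n-1$ and $h^{\#}(s)=0$. Combining $(c_{4})$ (which gives $g_{i}(0)=0$) with $(c'_{5})$ (which rules out other zeros of $g_{i}$ on $\overline{\mathcal{O}_{i+1}}$) forces $s_{i+1}=0$ for $i=1,\ldots,n-1$. Consequently every $s\in\overline{\Omega}\cap\mathbb{R}^{mn}$ with $\hat{g}(s)=0$ has the form $s=(\omega,0,\ldots,0)$ with $\omega\in (h^{*})^{-1}(0)\cap\overline{\mathcal{O}_{1}}$. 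Now $(c_{3})$ ensures that each zero second-through-$n$-th coordinate lies in the interior of the corresponding projection, while $(c'_{6})$ excludes $\omega\in\partial\mathcal{O}_{1}$; together these place any such zero $s$ in the interior of $\Omega\cap\mathbb{R}^{mn}$, giving $\hat{g}^{-1}(0)\cap\partial\Omega=\emptyset$, i.e.~$(c'_{2})$.

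Next I would invoke the bounded-set version of Proposition \ref{prop-3.1} (in which $(c_{5})$, $(c_{6})$ are replaced by $(c'_{5})$, $(c'_{6})$, as the authors indicate). This yields the factorization
\begin{equation*}
\text{\rm deg}_{B}(\hat{g},\Omega\cap\mathbb{R}^{mn},0) = (-1)^{m(n+1)}\,\text{\rm deg}_{B}(h^{*},\mathcal{O}_{1},0)\cdot\prod_{i=1}^{n-1}\text{\rm deg}_{B}(g_{i},\mathcal{O}_{i+1},0).
\end{equation*}
By hypothesis each factor on the right is nonzero, hence so is the product. With $(c'_{1})$, $(c'_{2})$ and $\text{\rm deg}_{B}(\hat{g},\Omega\cap\mathbb{R}^{mn},0)\neq0$ in hand, Theorem \ref{th-cycl2} delivers a $T$-periodic solution of $(\mathscr{C})$ in $\overline{\Omega}$.

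No genuinely new ingredient is needed; the only subtle point is the verification of $(c'_{2})$, since the natural hypothesis $(c'_{5})$ only excludes nonzero zeros on the \emph{closure} of $\mathcal{O}_{i+1}$, and one must then use $(c_{3})$ to separate the reduced zero $(\omega,0,\ldots,0)$ from $\partial\Omega$ in the $s_{2},\ldots,s_{n}$ directions before appealing to $(c'_{6})$ in the $s_{1}$ direction. Once this accounting is made, the corollary follows by citation.
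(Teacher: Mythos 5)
Your proof is correct and follows exactly the route the paper intends: the authors omit the proof and state that the corollary follows from Theorem~\ref{th-cycl2} together with the bounded-set modification of Proposition~\ref{prop-3.1}, which is precisely your assembly (verify $(c'_{2})$ from $(c_{3})$, $(c_{4})$, $(c'_{5})$, $(c'_{6})$, then get the nonvanishing of $\text{\rm deg}_{B}(\hat{g},\Omega\cap\mathbb{R}^{mn},0)$ from the product formula). Your explicit check of $(c'_{2})$ is a welcome filling-in of the "obvious changes" the paper leaves to the reader.
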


In the same spirit of Corollary~\ref{cor-3.2}, introducing the hypothesis
\begin{itemize}
\item [$(c'_{*})$]
$0 \in \mathcal{O}_{i+1}$ and  $g_{i}|_{\overline{\mathcal{O}_{i+1}}}
\colon \overline{\mathcal{O}_{i+1}}\to g_{i}(\overline{\mathcal{O}_{i+1}})\subseteq \mathbb{R}^{m}$
is a homeomorphism with $g_{i}(0) =0$, for all $i=1,\ldots,n-1$,
\end{itemize}
we can state the following.

\begin{corollary}\label{cor-3.4}
Let $\Omega$ be an open and bounded set with $\overline{\Omega} \subseteq \text{\rm dom}\,M$.
Assume $(c'_{1})$, $(c'_{6})$ and $(c'_{*})$.
If
\begin{equation*}
\text{\rm deg}_{B}(h^{*},\mathcal{O}_{1},0)\neq0,
\end{equation*}
then there exists at least a $T$-periodic solution of $(\mathscr{C})$ in $\overline{\Omega}$.
\end{corollary}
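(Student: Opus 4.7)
The plan is to derive Corollary~\ref{cor-3.4} as a direct specialization of Corollary~\ref{cor-3.3}, exactly mirroring how Corollary~\ref{cor-3.2} was obtained from Corollary~\ref{cor-3.1} in the unbounded setting. Concretely, I intend to show that hypothesis $(c'_{*})$ already encodes $(c_{3})$, $(c_{4})$ and $(c'_{5})$, and moreover automatically supplies the nonvanishing of $\text{\rm deg}_{B}(g_{i},\mathcal{O}_{i+1},0)$ for $i=1,\ldots,n-1$. Then the hypotheses of Corollary~\ref{cor-3.3} are all in force, and its conclusion yields the desired $T$-periodic solution in $\overline{\Omega}$.

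For the reductions: condition $(c_{3})$, namely $0\in\mathcal{O}_{i+1}$ for $i=1,\ldots,n-1$, is part of $(c'_{*})$; condition $(c_{4})$, namely $g_{i}(0)=0$, is likewise explicitly included; and $(c'_{5})$, the requirement that $g_{i}(\omega)\neq 0$ for all $\omega\in\overline{\mathcal{O}_{i+1}}\setminus\{0\}$, follows because $g_{i}|_{\overline{\mathcal{O}_{i+1}}}$ is a homeomorphism with $g_{i}(0)=0$ and is in particular injective on $\overline{\mathcal{O}_{i+1}}$.

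It remains to compute $\text{\rm deg}_{B}(g_{i},\mathcal{O}_{i+1},0)$. Since $g_{i}|_{\overline{\mathcal{O}_{i+1}}}$ is a homeomorphism onto its image with a unique zero at the interior point $0\in\mathcal{O}_{i+1}$ and no zeros on $\partial\mathcal{O}_{i+1}$, the standard property of Brouwer degree for homeomorphisms (degree equal to the sign of the induced orientation, in particular $\pm 1$) gives $\text{\rm deg}_{B}(g_{i},\mathcal{O}_{i+1},0)=\pm 1\neq 0$ for each $i=1,\ldots,n-1$. Combined with $(c'_{1})$, $(c'_{6})$ and the assumed $\text{\rm deg}_{B}(h^{*},\mathcal{O}_{1},0)\neq 0$, all hypotheses of Corollary~\ref{cor-3.3} are verified and its conclusion provides a $T$-periodic solution of $(\mathscr{C})$ in $\overline{\Omega}$.

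There is no genuine obstacle here: the proof is essentially bookkeeping, with the only subtle point being the observation that a homeomorphism onto its image, restricted to a neighborhood of an interior zero that is isolated from the boundary, has Brouwer degree $\pm 1$. This is standard (it can be seen by a homotopy to a linear isomorphism or directly from the definition via regular values), and it is exactly the step that was invoked implicitly in the passage from $(c_{*})$ to Corollary~\ref{cor-3.2}.
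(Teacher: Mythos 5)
Your proposal is correct and matches the route the paper intends: Corollary~\ref{cor-3.4} is obtained from Corollary~\ref{cor-3.3} by observing that $(c'_{*})$ subsumes $(c_{3})$, $(c_{4})$, $(c'_{5})$ and forces $\text{\rm deg}_{B}(g_{i},\mathcal{O}_{i+1},0)=\pm 1$, exactly as the paper does for the passage from $(c_{*})$ to Corollary~\ref{cor-3.2} (the paper omits the proof as ``obvious changes''). Your remark on the degree of a homeomorphism at an isolated interior zero is the same standard fact the paper invokes, so there is nothing missing.
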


\medskip

We conclude this section by showing a possible application where the hypothesis $(c'_{*})$ is automatically satisfied.
To this end, we consider the periodic problem associated with the $n$-th order differential system
for $u(t)\in \mathbb{R}^{m}$
\begin{equation}\label{eq-3.1}
\bigl{(}\varphi_{n-1}\bigl{(}\bigl{(}\ldots\bigl{(}\varphi_{2}\bigl{(}\bigl{(}\varphi_{1}(u')\bigr{)}'\bigr{)}\bigr{)}'\ldots\bigr{)}'\bigr{)}\bigr{)}' + k\bigl{(}t,u,u',\ldots,u^{(n)}\bigr{)} = 0,
\end{equation}
where, for each $i=1,\ldots,n-1$, $\varphi_{i}\colon \mathbb{R}^{m} \to \mathbb{R}^{m}$ is a homeomorphism with $\varphi_{i}(0)=0$.
Our study generalizes previous investigations in \cite{CaQiZa-99} in the scalar case ($m=1$).
Equation \eqref{eq-3.1} can be equivalently written as a cyclic feedback type system in $\mathbb{R}^{mn}$
of the form
\begin{equation}\label{syst-cycl}
\begin{cases}
\, x_{1}' = \varphi_{1}^{-1}(x_{2}) \\
\, x_{2}' = \varphi_{2}^{-1}(x_{3}) \\
\, \quad \vdots \\
\, x_{n-1}' = \varphi_{n-1}^{-1}(x_{n}) \\
\, x_{n}' = h(t,x_{1},\ldots,x_{n}),
\end{cases}
\end{equation}
where
\begin{equation*}
h(t,s_{1},s_{2},\ldots,s_{n}) := -k(t,s_{1},\varphi^{-1}_{1}(s_{2}),\ldots,\varphi_{n-1}^{-1}(s_{n})).
\end{equation*}
Observe that $h(t,s_{1},0,\ldots,0) = -k(t,s_{1},0,\ldots,0)$ and hence
\begin{equation*}
h^{*}(\omega) = -\frac{1}{T}\int_{0}^{T}k(t,\omega,0,\ldots,0)~\!dt.
\end{equation*}
From Corollary~\ref{cor-3.4} we directly obtain the following result
(the definition of the open sets $\mathcal{O}_i$ is the same as above).
The obvious proof is omitted.

\begin{theorem}\label{th-3.3}
Let $\Omega \subseteq \mathcal{C}(\mathopen{[}0,T\mathclose{]},\mathbb{R}^{mn})$ be an open and bounded set
such that $0\in \mathcal{O}_{i}$ for all $i=2,\ldots,n$.
Suppose that
\begin{itemize}
\item for each $\vartheta\in\mathopen{]}0,1\mathclose{[}$ there is no
$T$-periodic solutions of
\begin{equation*}
\begin{cases}
\, x_{1}' = \varphi_{1}^{-1}(x_{2}) \\
\, x_{2}' = \varphi_{2}^{-1}(x_{3}) \\
\, \quad \vdots \\
\, x_{n-1}' = \varphi_{n-1}^{-1}(x_{n}) \\
\, x_{n}' = \vartheta h(t,x_{1},\ldots,x_{n})
\end{cases}
\end{equation*}
with $x\in \partial\Omega$;
\item $h^{*}(\omega) \neq 0$, for every $\omega\in \partial\mathcal{O}_{1}$ and $\text{\rm deg}_{B}(h^{*},\mathcal{O}_{1},0)\neq 0$.
\end{itemize}
Then there exists at least a $T$-periodic solution $x(t)$ of \eqref{syst-cycl} in $\overline{\Omega}$.
\end{theorem}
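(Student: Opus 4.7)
The plan is to reduce Theorem~\ref{th-3.3} to a direct application of Corollary~\ref{cor-3.4}, since the cyclic feedback system \eqref{syst-cycl} is precisely an instance of $(\mathscr{C})$ with $g_i = \varphi_i^{-1}$ for $i=1,\ldots,n-1$ and with the last-component nonlinearity $h$ as defined just before the statement. Accordingly, I would simply verify that the hypotheses of Corollary~\ref{cor-3.4} (namely $(c'_1)$, $(c'_6)$, $(c'_*)$, and the nontriviality of $\text{\rm deg}_B(h^{*},\mathcal{O}_1,0)$) are satisfied in our setting.

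First, I would check hypothesis $(c'_*)$. Since each $\varphi_i \colon \mathbb{R}^m \to \mathbb{R}^m$ is a homeomorphism with $\varphi_i(0) = 0$, its inverse $g_i = \varphi_i^{-1}$ is also a homeomorphism of $\mathbb{R}^m$ onto $\mathbb{R}^m$ with $g_i(0) = 0$. Consequently, the restriction of $g_i$ to any closed subset of $\mathbb{R}^m$, and in particular to $\overline{\mathcal{O}_{i+1}}$, is a homeomorphism onto its image. The assumption $0 \in \mathcal{O}_i$ for all $i = 2,\ldots,n$ furnishes $0 \in \mathcal{O}_{i+1}$ for every $i = 1,\ldots,n-1$, so $(c'_*)$ is satisfied.

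Next, the first bullet in the statement is exactly $(c'_1)$. The assumption $h^{*}(\omega) \neq 0$ for every $\omega \in \partial \mathcal{O}_1$ guarantees that $(h^{*})^{-1}(0) \cap \partial\mathcal{O}_1 = \emptyset$, which is $(c'_6)$, and simultaneously ensures that $\text{\rm deg}_B(h^{*},\mathcal{O}_1,0)$ is well-defined. Combined with $\text{\rm deg}_B(h^{*},\mathcal{O}_1,0) \neq 0$ from the second bullet, Corollary~\ref{cor-3.4} applies directly and produces a $T$-periodic solution of \eqref{syst-cycl} in $\overline{\Omega}$.

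There is no substantial obstacle in this argument; the proof is essentially a bookkeeping matter of translating the assumption that the $\varphi_i$ are homeomorphisms fixing the origin into the verification of $(c'_*)$ for $g_i = \varphi_i^{-1}$. The real work has been absorbed into the earlier abstract results (Theorem~\ref{th-2.1}, Lemma~\ref{lem-2.4}, Proposition~\ref{prop-3.1}, and Corollary~\ref{cor-3.4}), which is precisely the point of the general theory developed in Sections~\ref{section-2} and~\ref{section-3}.
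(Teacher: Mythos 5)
Your proposal is correct and coincides with the paper's intended argument: the paper states that Theorem~\ref{th-3.3} follows directly from Corollary~\ref{cor-3.4} and omits the proof as obvious, and you have supplied exactly the intended bookkeeping, namely that $g_{i}=\varphi_{i}^{-1}$ being a global homeomorphism fixing the origin yields $(c'_{*})$ (its restriction to $\overline{\mathcal{O}_{i+1}}$ is automatically a homeomorphism onto its image since the globally continuous inverse $\varphi_{i}$ restricts as well), while the two bullets of the statement are precisely $(c'_{1})$, $(c'_{6})$ and the degree condition. No gaps.
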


An important case of system \eqref{eq-3.1} is given by the second order $\phi$-Laplacian equation
\begin{equation}\label{eq-phik}
\bigl{(}\phi(u')\bigr{)}' + k(t,u,u') = 0,
\end{equation}
where $\phi \colon \mathbb{R}^{m}\to \mathbb{R}^{m}$ is a homeomorphism with $\phi(0) = 0$
and $k \colon \mathopen{[}0,T\mathclose{]}\times \mathbb{R}^{m}\times \mathbb{R}^{m} \to \mathbb{R}^{m}$
is an $L^{1}$-Carath\'{e}odory function. System \eqref{eq-phik} plays an important role in several mathematical models
and therefore our next goal is to get some applications to this class of systems. With this respect,
it will be convenient to introduce the following notation.
We denote by $\mathcal{C}^{1}_{T}$ the space of continuously differentiable functions
$u \colon \mathopen{[}0,T\mathclose{]} \to \mathbb{R}^{m}$
satisfying the boundary condition
\begin{equation}\label{2BC}
u(0) = u(T), \quad u'(0) = u'(T).
\end{equation}
In this space, we take as a norm
\begin{equation*}
\|u\|_{\mathcal{C}^{1}}:= \max \bigl{\{} \|u\|_{\infty}, \|u'\|_{\infty} \bigr{\}},
\end{equation*}
which is equivalent to the more standard norm $\|u\|_{\infty} + \|u'\|_{\infty}$.

We also set
\begin{equation*}
k^{*}(\omega) := \frac{1}{T}\int_{0}^{T}k(t,\omega,0)~\!dt, \quad \omega\in\mathbb{R}^{m}.
\end{equation*}

As mentioned in the introduction, a relevant continuation theorem for system \eqref{eq-phik},
involving the homotopic equation
\begin{equation}\label{eq-phiklam}
\bigl{(}\phi(u')\bigr{)}' + \lambda k(t,u,u') = 0,
\end{equation}
was achieved by
Man\'{a}sevich and Mawhin in \cite{MaMa-98} under some additional hypotheses on the homeomorphism $\phi$.
In this setting, one could observe that equation \eqref{eq-phik} is equivalent to the
first order cyclic system in $\mathbb{R}^{2m}$
\begin{equation}\label{syst-cycl2}
\begin{cases}
\, x_{1}' = \phi^{-1}(x_{2}) \\
\, x'_{2} = h(t,x_{1},x_{2}),
\end{cases}
\end{equation}
where $h(t,x_{1},x_{2}):= -k(t,x_{1},\phi^{-1}(x_{2}))$ and, analogously, \eqref{eq-phiklam} can be written as
\begin{equation}\label{syst-cycl2lam}
\begin{cases}
\, x_{1}' = \phi^{-1}(x_{2}) \\
\, x'_{2} = \lambda h(t,x_{1},x_{2}),
\end{cases}
\end{equation}
so that the continuation theorem \cite[Theorem~3.1]{MaMa-98} could be derived as a corollary of
Theorem~\ref{th-3.3}, without any further condition on $\phi$. However, a deeper inspection shows that the
situation is not so simple. Indeed, in \cite{MaMa-98} the condition on the homotopic equation \eqref{eq-phiklam}
requires no solutions on the boundary of an open and bounded set in the $\mathcal{C}^{1}_{T}$-norm. Due to the fact that
this norm is strictly finer that the $\sup$-norm that we consider for our approach, it seems not obvious how to
include the results in \cite{MaMa-98} in our setting (except for very special cases of $\Omega$).
We propose below a possible way to overcome this difficulty
and thus recover Man\'{a}sevich-Mawhin continuation theorem \cite[Theorem~3.1]{MaMa-98},
without additional hypotheses on $\phi$.

\begin{theorem}\label{th-mama1}
Let $\mathcal{U}$ be an open and bounded set in $\mathcal{C}^{1}_{T}$ such that the following conditions hold.
\begin{itemize}
\item For each $\lambda\in\mathopen{]}0,1\mathclose{[}$ the problem
\begin{equation*}
\bigl{(}\phi(u')\bigr{)}' + \lambda \, k(t,u,u') = 0, \quad u(0) = u(T), \quad u'(0) = u'(T),
\end{equation*}
has no solution on $\partial\mathcal{U}$.
\item The equation $k^{*}(\omega) = 0$ has no solution on $\partial\mathcal{U}\cap \mathbb{R}^{m}$ and
\begin{equation*}
\text{\rm deg}_{B}(k^{*},\mathcal{U}\cap \mathbb{R}^{m},0)\neq0.
\end{equation*}
\end{itemize}
Then, problem \eqref{eq-phik}-\eqref{2BC} has at least a solution in $\overline{\mathcal{U}}$.
\end{theorem}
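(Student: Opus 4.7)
The plan is to rewrite \eqref{eq-phik} as the first-order cyclic feedback system \eqref{syst-cycl2} via $x_{1}:=u$, $x_{2}:=\phi(u')$, and then invoke Theorem~\ref{th-3.3}. Under this substitution the $\lambda$-homotopic equation $(\phi(u'))' + \lambda k(t,u,u')=0$ corresponds exactly to the $\vartheta$-cyclic system \eqref{syst-cycl2lam} with $\vartheta=\lambda$, and since $\phi^{-1}(0)=0$ one has $h^{*}(\omega) = -k^{*}(\omega)$, so the assumption $\text{\rm deg}_{B}(k^{*},\mathcal{U}\cap\mathbb{R}^{m},0)\neq 0$ will transfer directly. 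Once a suitable open bounded set $\Omega\subset\mathcal{C}(\mathopen{[}0,T\mathclose{]},\mathbb{R}^{2m})$ is built, Theorem~\ref{th-3.3} will yield a $T$-periodic solution $(x_{1},x_{2})\in\overline{\Omega}$ of \eqref{syst-cycl2}, and the corresponding $u := x_{1}$ will be the desired solution of \eqref{eq-phik}--\eqref{2BC} in $\overline{\mathcal{U}}$.

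The main obstacle is the mismatch of topologies highlighted in the discussion preceding the statement: $\mathcal{U}$ is open in the $\mathcal{C}^{1}_{T}$-norm, which is strictly finer than the $\sup$-norm of the ambient space in which Theorem~\ref{th-3.3} operates, so the naive choice $\Omega = \mathcal{U} \times \{\|x_{2}\|_{\infty} < R\}$ does not even make sense. My proposal is to define $\Omega$ as a carefully tuned tubular neighbourhood of the graph $\{(u,\phi(u')): u\in\mathcal{U}\}$, namely
\begin{equation*}
\Omega := \bigcup_{u\in\mathcal{U}} \Bigl\{(x_{1},x_{2})\in\mathcal{C}_{T}\times\mathcal{C}_{T} \colon \|x_{1}-u\|_{\infty} + \|\phi^{-1}(x_{2})-u'\|_{\infty} < \epsilon_{u}\Bigr\} \cap \bigl(\mathcal{C}_{T}\times B(0,R)\bigr),
\end{equation*}
with $R>0$ large enough to contain $\phi(u')$ for every $u\in\overline{\mathcal{U}}$, and with radii $\epsilon_{u}>0$ chosen so that $B^{\mathcal{C}^{1}}(u,\epsilon_{u})\subseteq\mathcal{U}$, with the further requirements that $B_{\mathbb{R}^{m}}(u,\epsilon_{u})\subseteq\mathcal{U}\cap\mathbb{R}^{m}$ when $u$ is a constant and $\epsilon_{u}<\mathrm{dist}_{\infty}(u,\mathbb{R}^{m})$ when $u$ is non-constant. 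The continuity of $\phi^{-1}$ makes $\Omega$ open in the $\sup$-norm.

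With this construction, the identifications $\mathcal{O}_{1}=\mathcal{U}\cap\mathbb{R}^{m}$ and $0\in\mathcal{O}_{2}$ follow by direct inspection: the bound $\epsilon_{u}<\mathrm{dist}_{\infty}(u,\mathbb{R}^{m})$ prevents non-constant elements of $\mathcal{U}$ from contributing spurious constants to $\mathcal{O}_{1}$, while the constraint on constant $u$ ensures that the union of balls generating $\mathcal{O}_{1}$ stays inside $\mathcal{U}\cap\mathbb{R}^{m}$. The delicate step, which I expect to be the hardest part, is to verify that for each $\vartheta\in\mathopen{]}0,1\mathclose{[}$ the $\vartheta$-cyclic system has no $T$-periodic solution on $\partial\Omega$. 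If such a solution $(x_{1},x_{2})$ existed, then $u := x_{1}$ would lie in $\mathcal{C}^{1}_{T}$ with $x_{2}=\phi(u')$ and would solve the $\vartheta$-homotopic equation; approximating $(x_{1},x_{2})$ by points of $\Omega$ and exploiting the uniform continuity of $\phi^{-1}$ on bounded sets together with a further refinement $\epsilon_{u}\leq\tfrac{1}{2}\mathrm{dist}_{\mathcal{C}^{1}}(u,\partial\mathcal{U})$, which forces the approximating functions $u_{n}\in\mathcal{U}$ to converge to $u$ in the $\mathcal{C}^{1}_{T}$-norm, one concludes $u\in\partial\mathcal{U}$, contradicting the first hypothesis. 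Once this norm-passage is established, the degree $\text{\rm deg}_{B}(h^{*},\mathcal{O}_{1},0)$ is nonzero by assumption and Theorem~\ref{th-3.3} (or equivalently Corollary~\ref{cor-3.4}) completes the argument without any extra condition on $\phi$.
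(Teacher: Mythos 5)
Your proposal is correct in substance and follows the paper's overall reduction (rewrite \eqref{eq-phik} as the cyclic system \eqref{syst-cycl2}, transfer the degree via $h^{*}=-k^{*}$, and invoke Theorem~\ref{th-3.3}), but it resolves the $\mathcal{C}^{1}_{T}$-versus-$\sup$-norm mismatch by a genuinely different device. The paper first proves (\textit{Step~1}) via Ascoli--Arzel\`{a} that the solution set $\mathcal{K}=\bigcup_{\lambda\in\mathopen{]}0,1\mathclose{]}}\{u\in\overline{\mathcal{U}}:(\phi(u'))'+\lambda k(t,u,u')=0\}$ is a compact subset of $\mathcal{U}$ (treating the limit case $\lambda\to0$ separately with the degree hypothesis), then covers $\mathcal{K}$ by finitely many product-form sets $\mathcal{U}^{\alpha}=\{u:u\in\mathcal{U}^{\alpha}_{1},\,u'\in\mathcal{U}^{\alpha}_{2}\}$ and takes $\Omega=\bigcup_{\alpha}\mathcal{U}^{\alpha}_{1}\times\phi(\mathcal{U}^{\alpha}_{2})$. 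You instead take an uncountable union of graph-tubes around \emph{all} of $\mathcal{U}$, with radii calibrated by $\mathrm{dist}_{\mathcal{C}^{1}}(u,\partial\mathcal{U})$ and by $\mathrm{dist}_{\infty}(u,\mathbb{R}^{m})$. This buys you two things: you bypass the compactness argument (no Ascoli--Arzel\`{a}, no separate $\lambda=0$ analysis), and you get the clean identification $\mathcal{O}_{1}=\mathcal{U}\cap\mathbb{R}^{m}$ exactly, whereas the paper's finite cover only yields this up to an excision argument that it leaves implicit. The price is that all the work migrates into the boundary analysis, and there your sketch contains one inaccuracy: the calibration $\epsilon_{u}\leq\tfrac{1}{2}\mathrm{dist}_{\mathcal{C}^{1}}(u,\partial\mathcal{U})$ does \emph{not} force the approximating $u_{n}$ to converge to $u$ in $\mathcal{C}^{1}_{T}$, since the radii $\epsilon_{u_{n}}$ need not shrink; consequently $u\in\partial\mathcal{U}$ is not the only outcome. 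The argument is still salvageable by a dichotomy: if $\liminf_{n}\mathrm{dist}_{\mathcal{C}^{1}}(u_{n},\partial\mathcal{U})=0$ then indeed $u_{n}\to u$ and $u\in\partial\mathcal{U}$, contradicting the first hypothesis; if instead $\liminf_{n}\mathrm{dist}_{\mathcal{C}^{1}}(u_{n},\partial\mathcal{U})>0$, then $\|u-u_{n}\|_{\mathcal{C}^{1}}<\mathrm{dist}_{\mathcal{C}^{1}}(u_{n},\partial\mathcal{U})$ for large $n$ forces $u\in\mathcal{U}$, whence $(x_{1},x_{2})$ lies in the tube centered at $u$ itself and hence in the open set $\Omega$, contradicting $(x_{1},x_{2})\in\partial\Omega$. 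With this two-case correction (and the routine verification that a solution produced in $\overline{\Omega}$ projects to some $u\in\overline{\mathcal{U}}$), your route closes and constitutes a valid alternative proof.
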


\begin{proof}
If there exists a solution in $\partial\mathcal{U}$, we are done.
Then, for the rest of the proof, we assume that problem \eqref{eq-phik}-\eqref{2BC} has no solution in $\partial\mathcal{U}$.
We split our argument into three steps.

\smallskip

\noindent
\textit{Step~1. Compactness. }
We claim that the set
\begin{equation*}
\mathcal{K} := \bigcup_{\lambda\in\mathopen{]}0,1\mathclose{]}}\Bigl{\{}u\in\overline{\mathcal{U}}\colon \bigl{(}\phi(u')\bigr{)}' + \lambda \, k(t,u,u') = 0 \Bigr{\}}
\end{equation*}
is a compact subset of $\mathcal{U}$.
To this end, let $(\lambda_{n},u_{n})\in\mathopen{]}0,1\mathclose{]}\times\overline{\mathcal{U}}$ be such that
\begin{equation*}
\bigl{(}\phi(u_{n}')\bigr{)}' + \lambda_{n} \, k(t,u_{n},u_{n}') = 0.
\end{equation*}
By assumption, $\overline{\mathcal{U}}$ is bounded, therefore there is a constant $r > 0$ such that
$\|u\|_{\infty} \leq r$ and $\|u'\|_{\infty} \leq r$, for each $u\in \overline{\mathcal{U}}$.
Then, from the Carath\'{e}odory conditions we deduce that there exists a measurable function $\rho\in L^{1}(\mathopen{[}0,T\mathclose{]})$
such that $\|k(t,u_{n}(t),u_{n}'(t))\|_{\mathbb{R}^{m}}\leq \rho(t)$, for a.e.~$t\in \mathopen{[}0,T\mathclose{]}$.
We also introduce the uniformly continuous function $\mathcal{R}(t):=\int_{0}^{t}\rho(\xi)~\!d\xi$, for $t\in \mathopen{[}0,T\mathclose{]}$.
The sequence $(u'_{n})_{n}$ is equicontinuous. Indeed,
\begin{equation*}
u'_{n}(t) = \phi^{-1}(v_{n}(t)),\quad \text{where } \, v_{n}(t):= \phi(u'_{n}(0)) -\lambda_{n} \,\int_{0}^{t} k(\xi,u_{n}(\xi),u_{n}'(\xi))~\!d\xi,
\end{equation*}
and $v_{n}$ is uniformly bounded on $\mathopen{[}0,T\mathclose{]}$ by the constant
\begin{equation*}
r_{1}:= \max \bigl{\{}\|\phi(z)\|_{\mathbb{R}^{m}} \colon \|z\|_{\mathbb{R}^{m}} \leq r \bigr{\}} + \mathcal{R}(T).
\end{equation*}
The uniform continuity of the map $\phi^{-1} \colon \mathbb{R}^{m}\to \mathbb{R}^{m}$ restricted to the closed ball $B[0,r_{1}]\subseteq \mathbb{R}^{m}$
implies that for each $\varepsilon > 0$ there is a $\delta=\delta_{\varepsilon} > 0$ such that
$\|\phi^{-1}(z) - \phi^{-1}(y)\|_{\mathbb{R}^{m}} < \varepsilon$
for all $z,y\in B[0,r_{1}]$ with $\|z - y\|_{\mathbb{R}^{m}} < \delta$.
On the other hand, given $\delta > 0$ there is $\eta=\eta_{\delta} > 0$ such that $|\mathcal{R}(t) - \mathcal{R}(s)| < \delta$
for all $t,s\in\mathopen{[}0,T\mathclose{]}$ with $|t-s|< \eta$.
Thus, given $\varepsilon > 0$, we have that
\begin{equation*}
\|u'_{n}(t) - u'_{n}(s)\|_{\mathbb{R}^{m}} = \|\phi^{-1}(v_{n}(t)) - \phi^{-1}(v_{n}(s))\|_{\mathbb{R}^{m}} < \varepsilon
\end{equation*}
whenever $\|v_{n}(t) - v_{n}(s)\|_{\mathbb{R}^{m}} <\delta$.
On the other hand,
\begin{equation*}
\begin{aligned}
\|v_{n}(t) - v_{n}(s)\|_{\mathbb{R}^{m}}
&= \biggl{\|}\lambda_{n} \, \int_s^t k(\xi,u_{n}(\xi),u_{n}'(\xi))~\!d\xi \biggr{\|}_{\mathbb{R}^{m}} \\
& \leq \biggl{|} \int_s^t \|k(\xi,u_{n}(\xi),u_{n}'(\xi))\|_{\mathbb{R}^{m}}~\!d\xi \biggr{|} \\
& \leq \biggl{|} \int_s^t \rho(\xi)~\!d\xi \biggr{|} = |\mathcal{R}(t) - \mathcal{R}(s) |.
\end{aligned}
\end{equation*}
Thus we conclude that $\|u'_{n}(t) - u'_{n}(s)\|_{\mathbb{R}^{m}} < \varepsilon$
for $|t-s|< \eta$, for every $n$.
The Ascoli-Arzel\`{a} theorem guarantees that, up to a subsequence, $u_{n}\to \tilde{u}\in \overline{\mathcal{U}}$ in the $\mathcal{C}^{1}_{T}$-norm
and we have also $\lambda_{n}\to \tilde{\lambda}\in \mathopen{[}0,1\mathclose{]}$. By the
assumption of no solutions on the boundary, we know that if $\tilde{\lambda} \in \mathopen{]}0,1\mathclose{]}$ we must have $\tilde{u}\in \mathcal{U}$.
We study now separately the case in which $\tilde{\lambda} = 0$. In this case, by the same computations as above and the
dominated convergence theorem, we find that $\phi(\tilde{u}'(t))= \phi(\tilde{u}'(0))$ for all $t\in \mathopen{[}0,T\mathclose{]}$, so that
(recalling that $\tilde{u}\in \overline{\mathcal{U}}$ satisfies \eqref{2BC}), $\tilde{u}$
is constant, that is $\tilde{u}(t) =\tilde{\omega}$ for some $\tilde{\omega} \in \overline{\mathcal{U}}\cap\mathbb{R}^{m}$.
Finally, the second hypothesis in the theorem ensures that $\tilde{\omega}\in \mathcal{U}$. The claim is proved.

\smallskip

\noindent
\textit{Step~2. A special case for the domain. }
Suppose that there exist two open bounded sets $\mathcal{U}_{1},\mathcal{U}_{2}\in\mathcal{C}(\mathopen{[}0,T\mathclose{]},\mathbb{R}^{m})$ with $0\in\mathcal{U}_{2}$ such that
\begin{equation*}
\mathcal{U} = \bigl{\{}u\in\mathcal{C}^{1}_{T} \colon u\in\mathcal{U}_{1}, \, u'\in\mathcal{U}_{2} \bigr{\}}.
\end{equation*}
We write \eqref{eq-phik} as an equivalent first order cyclic system in $\mathbb{R}^{2m}$ of the form \eqref{syst-cycl2}
with $h(t,x_{1},x_{2}):= -k(t,x_{1},\phi^{-1}(x_{2}))$.

In the Banach space $X:= \mathcal{C}(\mathopen{[}0,T\mathclose{]},\mathbb{R}^{2m})$ we define the set
\begin{equation*}
\Omega := \mathcal{U}_{1} \times \phi(\mathcal{U}_{2}) = \bigl{\{} x=(x_{1},x_{2})\in X \colon x_{1} \in \mathcal{U}_{1}, \, \phi^{-1}(x_{2}) \in \mathcal{U}_{2} \bigr{\}}.
\end{equation*}
Clearly the set $\Omega$ is open and bounded in $X$.

With these positions, we can easily check that the first hypothesis of the theorem implies that system
\eqref{syst-cycl2lam}
has no $T$-periodic solution $x\in \partial\Omega$, for any $\lambda\in \mathopen{]}0,1\mathclose{[}$,
actually for any $\lambda\in \mathopen{]}0,1\mathclose{]}$, because we have started the proof by assuming that \eqref{eq-phik}
has no solution on the boundary.
We are therefore in the setting of Theorem~\ref{th-3.3} with its first condition satisfied. Also the second condition in
Theorem~\ref{th-3.3} holds, because it follows directly from the second hypothesis of the present theorem.
Then, we can apply Theorem~\ref{th-3.3} and we obtain that
there exists at least a $T$-periodic solution $x(t)=(x_{1}(t),x_{2}(t))$ of \eqref{syst-cycl} in $\overline{\Omega}$.
Actually, we have $x\in \Omega$ (since we have started our proof by assuming that there are no solutions on the boundary).
Defining $u(t):=x_{1}(t)$ for $t\in\mathopen{[}0,T\mathclose{]}$,
we immediately conclude that $u=x_{1}\in \mathcal{U}_{1}$, $u'=\phi^{-1}(x_{2}) \in \mathcal{U}_{2}$,
then $u \in \mathcal{U}$, and $u(t)$ satisfies \eqref{eq-phik} and \eqref{2BC}.

\smallskip

\noindent
\textit{Step~3. General case. }
Let $\mathcal{U}\in\mathcal{C}^{1}_{T}$ be an open and bounded set.
From \textit{Step~1}, $\mathcal{K}$ is a compact subset of $\mathcal{U}$.
Therefore, for each point $w\in \mathcal{K}$ there is an open ball $B(w,r_{w})\subseteq \mathcal{U}$,
in the $\mathcal{C}^{1}_{T}$-norm, which is a set
of the product form as the one in \textit{Step~2}.
Indeed, $u\in B(w,r_{w})$ if and only if $\|u-w\|_{\infty}< r_{w}$ and $\|u'-w'\|_{\infty}< r_{w}$.
By a standard compactness argument, we have
\begin{equation*}
\mathcal{K} \subseteq  \bigcup_{\alpha=1}^{\ell} \mathcal{U}^{\alpha},
\end{equation*}
with $\mathcal{U}^{\alpha}\subseteq \mathcal{U}$ an open (and bounded) set of the form
\begin{equation}\label{sets-u}
\mathcal{U}^{\alpha} := \bigl{\{}u\in\mathcal{C}^{1}_{T} \colon u\in\mathcal{U}^{\alpha}_{1}, \, u'\in\mathcal{U}^{\alpha}_{2} \bigr{\}},
\end{equation}
where $\mathcal{U}^{\alpha}_{1},\mathcal{U}^{\alpha}_{2}\in\mathcal{C}(\mathopen{[}0,T\mathclose{]},\mathbb{R}^{m})$ are open (bounded) set.

We notice that, since $\text{\rm deg}_{B}(k^{*},\mathcal{U}\cap \mathbb{R}^{m},0)\neq0$, there exists at least a constant solution in $\mathcal{K}$ and therefore at least one of the sets
$\mathcal{U}^{\alpha}$ contains an element of the form $\omega\in \mathcal{U}\cap\mathbb{R}^{m}$.
This in turn means that at least one of the $\mathcal{U}^{\alpha}_{2}$ contains the element $0$.

Next, we define the set
\begin{equation*}
\Omega := \bigcup_{\alpha=1}^{\ell} \Omega^{\alpha}, \quad \text{where } \, \Omega^{\alpha}:= \mathcal{U}^{\alpha}_{1}\times\phi(\mathcal{U}^{\alpha}_{2}).
\end{equation*}
Clearly the set $\Omega$ is open and bounded in $X$.
Moreover, if $u\in\partial\Omega$ then there exists at least an index $\alpha\in\{1,\ldots,\ell\}$ such that $u\in\partial\Omega^{\alpha}$.
From the above remark, we also have that $0\in\phi(\mathcal{U}^{\alpha}_{2})$ for at least an index $\alpha\in\{1,\ldots,\ell\}$.

Finally, arguing as in \textit{Step~2}, it is easy to check the validity of all the hypotheses of Theorem~\ref{th-3.3} and thus we obtain that
there exists at least a $T$-periodic solution $x(t)=(x_{1}(t),x_{2}(t))$ of \eqref{syst-cycl} in $\overline{\Omega}$
(actually,  $x\in \Omega$).
Defining $u(t):=x_{1}(t)$ for $t\in\mathopen{[}0,T\mathclose{]}$,
we immediately conclude that there is an index $\alpha\in\{1,\ldots,\ell\}$ such that $u=x_{1}\in \mathcal{U}^{\alpha}_{1}$,
$u'=\phi^{-1}(x_{2}) \in \mathcal{U}^{\alpha}_{2}$.
Then $u \in {\mathcal{U}}$, and $u(t)$ satisfies \eqref{eq-phik} and \eqref{2BC}.

The theorem is thus proved.
\end{proof}

\begin{remark}\label{rem-3.2}
In \cite{MaMa-98,MaMa-98T,MaMa-00}, Man\'{a}sevich and Mawhin consider a class of continuous functions $\phi \colon \mathbb{R}^{m} \to \mathbb{R}^{m}$ satisfying
\begin{itemize}
\item[$(H1)$] for every $x_{1},x_{2}\in \mathbb{R}^{m}$, $x_{1}\neq x_{2}$, $\langle\phi(x_{1})-\phi(x_{2}),x_{1}-x_{2}\rangle>0$;
\item[$(H2)$] there exists a function $\alpha \colon \mathopen{[}0,+\infty\mathclose{[} \to \mathopen{[}0,+\infty\mathclose{[}$, with $\alpha(s)\to+\infty$ as $s\to+\infty$,
such that $\langle\phi(x),x\rangle\geq\alpha(|x|)|x|$ for all $x\in\mathbb{R}^{m}$.
\end{itemize}
From these two conditions it follows that the map $\phi \colon \mathbb{R}^{m} \to \mathbb{R}^{m}$ is a homeomorphism such that $\phi(0)=0$.
Clearly our hypotheses cover the case considered in \cite{MaMa-98,MaMa-98T,MaMa-00} and, moreover, it is more general as explained below and in Figure~\ref{fig-01}.

Under our conditions we can deal with a continuous function built in the following manner.
For $i=1,\ldots,n$, let $h_{i}\colon \mathbb{R} \to \mathbb{R}$ be a homeomorphism such that $h_{i}(0)=0$.
Let $\mathcal{M}\in GL_{n}(\mathbb{R})$ be an invertible matrix in $\mathbb{R}^{m\times m}$.
The function $\phi \colon \mathbb{R}^{m} \to \mathbb{R}^{m}$ defined as
\begin{equation*}
\begin{pmatrix} u_{1} \\ \vdots \\ u_{n} \end{pmatrix}
\mapsto
\mathcal{M}
\begin{pmatrix} h_{1}(u_{1}) \\ \vdots \\ h_{n}(u_{n}) \end{pmatrix}
\end{equation*}
is a homeomorphism.
Figure~\ref{fig-01} shows another type of homeomorphism
$\phi \colon \mathbb{R}^{2} \to \mathbb{R}^{2}$ which does not satisfy conditions $(H1)$ and $(H2)$.
$\hfill\lhd$

\begin{figure}[h!]
\centering
\includegraphics[width=0.45\textwidth]{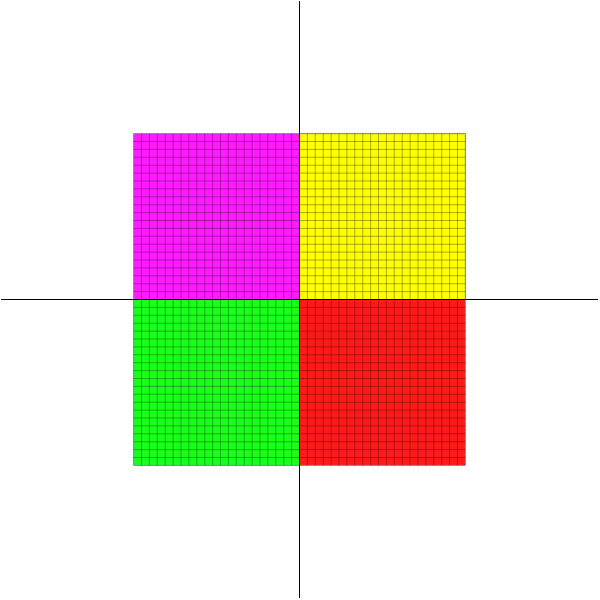}
\quad\quad
\includegraphics[width=0.45\textwidth]{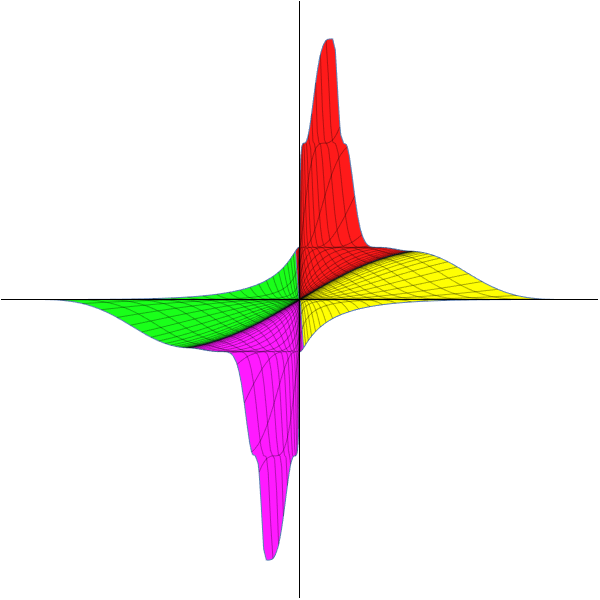}
\caption{\small{The figure shows an example of a homeomorphism $\phi \colon \mathbb{R}^{2} \to \mathbb{R}^{2}$ which does not satisfy conditions $(H1)$ and $(H2)$.
We consider a function $\phi=(\phi_{1},\phi_{2})$ defined as $\phi_{1}(x,y):= (3x+y^{3})^{3}/40$, $\phi_{2}(x,y):= (\sin(2(x-y^{5})^{3}) + 2(x-y^{5})^{3})/10$.
We illustrate how the square $\mathopen{[}-1,1\mathclose{]}\times\mathopen{[}-1,1\mathclose{]} \subseteq \mathbb{R}^{2}$ (on the left) is mapped
by the function $\phi$ (figure on the right).
For example the points $(-1,2)$, $(5,6)$ do not satisfy $(H1)$ and the point $(-2,-2)$ does not satisfy $(H2)$.
}}
\label{fig-01}
\end{figure}
\end{remark}

\section{Periodic solutions to cyclic feedback systems: homotopy to an autonomous system}\label{section-4}

In this section we continue the study of the differential system $(\mathscr{C})$ introduced in Section~\ref{section-3}.
We keep all the basic assumptions for $(\mathscr{C})$ considered therein, as well as the abstract framework for the coincidence degree.
As an application, we give a continuation theorem which involves an homotopy between $(\mathscr{C})$ and
the \textit{autonomous} differential system
\begin{equation*}
\begin{cases}
\, x_{1}' = g_{1}(x_{2}) \\
\, x_{2}' = g_{2}(x_{3}) \\
\, \quad \vdots \\
\, x_{n-1}' = g_{n-1}(x_{n}) \\
\, x_{n}' = h_{0}(x_{1},\ldots,x_{n}),
\end{cases}
\leqno{(\mathscr{C}_{0}^{0})}
\end{equation*}
where $h_{0}\colon \mathbb{R}^{m}\times \dots \times \mathbb{R}^{m} \to \mathbb{R}^{m}$ is a continuous function.
In detail, we consider an auxiliary function
$\tilde{h} \colon \mathopen{[}0,T\mathclose{]}\times\mathbb{R}^{m}\times \dots \times \mathbb{R}^{m}\times \mathopen{[}0,1\mathclose{]} \to \mathbb{R}^{m}$,
satisfying the $L^{1}$-Carath\'{e}odory conditions and such that
\begin{equation}\label{eq-contaut}
\begin{aligned}
&\tilde{h}(t,x_{1},\ldots,x_{n},1) = h(t,x_{1},\ldots,x_{n}), \\
&\tilde{h}(t,x_{1},\ldots,x_{n},0) = h_{0}(x_{1},\ldots,x_{n}).
\end{aligned}
\end{equation}
Consistently with the previous notation, since $M$ is the Nemytskii operator associated with $h$,
we denote by $\tilde{M}$ the corresponding operator associated with $\tilde{h}$.
We also introduce the autonomous vector field
\begin{equation*}
\hat{g}_{0}(s):=\bigl{(}g_{1}(s_{2}),\ldots,g_{n-1}(s_{n}),h_{0}(s)\bigr{)}, \quad s=(s_{1},\ldots,s_{n})\in\mathbb{R}^{mn}.
\end{equation*}

In the next results, when we write  $\Omega \subseteq \text{\rm dom}\,\tilde{M}$ (or $\overline{\Omega} \subseteq \text{\rm dom}\,\tilde{M}$),
we in fact consider only the case of $\Omega \subseteq X =\mathcal{C}(\mathopen{[}0,T\mathclose{]},\mathbb{R}^{mn})$. However,
in principle, the same results could be applied (using Theorem~\ref{th-CMZ-BM})
also to a more general situation, as explained in Remark~\ref{rem-3.1} (see also Section~\ref{section-6}).

We are now in position to state our first result, which is a direct consequence of \cite[Theorem~1]{CaMaZa-92} and of the
homotopic invariance of the coincidence degree (cf.~Lemma~\ref{lemma-inv}).

\begin{lemma}\label{lem-cycl0}
Let $\Omega$ be an open and bounded set with $\overline{\Omega} \subseteq \text{\rm dom}\,\tilde{M}$.
Suppose that the following condition holds.
\begin{itemize}
\item[$(a'_{1})$]
For each $\lambda\in\mathopen{[}0,1\mathclose{]}$ there is no $T$-periodic solution of
\begin{equation*}
\begin{cases}
\, x_{1}' = g_{1}(x_{2}) \\
\, x_{2}' = g_{2}(x_{3}) \\
\, \quad \vdots \\
\, x_{n-1}' = g_{n-1}(x_{n}) \\
\, x_{n}' = \tilde{h}(t,x_{1},\ldots,x_{n},\lambda)
\end{cases}
\leqno{(\mathscr{C}^{0}_{\lambda})}
\end{equation*}
with $x\in \partial\Omega$.
\end{itemize}
Then
\begin{equation*}
D_{L}(L-M,\Omega) = (-1)^{mn} \, \text{\rm deg}_{B}(\hat{g}_{0},\Omega \cap \mathbb{R}^{mn},0).
\end{equation*}
\end{lemma}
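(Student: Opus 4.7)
The plan is to proceed in two successive reductions. First, I will use the homotopic invariance Lemma~\ref{lemma-inv} along the parametric family $\tilde{h}(\cdot,\cdot,\lambda)$ to pass from the coincidence degree of $L-M$ to that of $L-M_{0}$, where $M_{0}$ is the Nemytskii operator attached to the autonomous right-hand side $\hat{g}_{0}$. Then I will invoke the autonomous formula (essentially \cite[Theorem~1]{CaMaZa-92}, or its variant stated in the appendix, which can itself be recovered from Theorem~\ref{th-2.1}) to identify $D_{L}(L-M_{0},\Omega)$ with $(-1)^{mn}\text{\rm deg}_{B}(\hat{g}_{0},\Omega\cap\mathbb{R}^{mn},0)$, the sign factor reflecting that $\text{\rm dim}(\ker L)=mn$ and that the reduction formula produces the degree of $-JQM_{0}|_{\ker L}=-\hat{g}_{0}$.

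To execute the first reduction I introduce $\tilde{N}\colon\overline{\Omega}\times\mathopen{[}0,1\mathclose{]}\to Z$ as the $L$-completely continuous operator whose components are $g_{i}(x_{i+1})$ for $i=1,\ldots,n-1$ and $\tilde{h}(\cdot,x,\lambda)$ in the last slot, so that $\tilde{N}(\cdot,1)=M$ and $\tilde{N}(\cdot,0)=M_{0}$. The solution set
\begin{equation*}
\mathcal{S} := \bigcup_{\lambda\in\mathopen{[}0,1\mathclose{]}}\bigl{\{} x\in\overline{\Omega}\cap\text{\rm dom}\,L \colon Lx=\tilde{N}(x,\lambda)\bigr{\}}
\end{equation*}
is exactly the set of $T$-periodic solutions of $(\mathscr{C}^{0}_{\lambda})$ in $\overline{\Omega}$, and hypothesis $(a'_{1})$ gives $\mathcal{S}\cap\partial\Omega=\emptyset$. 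The boundedness of $\Omega$, the continuity of the $g_{i}$ on bounded sets, and the $L^{1}$-Carath\'{e}odory bound on $\tilde{h}$ together provide a uniform integrable majorant for $x'$ with $x\in\mathcal{S}$; Ascoli--Arzel\`{a} then yields relative compactness of $\mathcal{S}$ in $X$, and the disjointness from $\partial\Omega$ places $\overline{\mathcal{S}}$ inside $\Omega$. Applying Lemma~\ref{lemma-inv} delivers $D_{L}(L-M,\Omega)=D_{L}(L-M_{0},\Omega)$.

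For the second reduction, $(a'_{1})$ at $\lambda=0$ forbids constant solutions of $(\mathscr{C}^{0}_{0})$ on $\partial\Omega$, i.e.~$\hat{g}_{0}^{-1}(0)\cap\partial\Omega=\emptyset$, so that $\text{\rm deg}_{B}(\hat{g}_{0},\Omega\cap\mathbb{R}^{mn},0)$ is well defined. To evaluate $D_{L}(L-M_{0},\Omega)$ I can perform a second homotopy, inside the last equation, between $M_{0,n}$ and its averaged version $Q_{n}M_{0,n}$, exactly in the spirit of the auxiliary system $(\mathscr{A}_{\vartheta})$ used in the proof of Theorem~\ref{th-2.1}. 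This reduces the problem to Lemma~\ref{lem-2.3} (with the range condition \eqref{cond-n} automatic because the $n$-th component now lies in $\text{\rm coker}\,L_{n}$, cf.~Remark~\ref{rem-2.2}), whose conclusion, after identifying $\ker L_{i}$ and $\text{\rm coker}\,L_{i}$ with $\mathbb{R}^{m}$ and taking $J_{i}=Id$, gives $D_{L}(L-M_{0},\Omega)=\text{\rm deg}_{B}(-\hat{g}_{0},\Omega\cap\mathbb{R}^{mn},0)$. A final sign adjustment $\text{\rm deg}_{B}(-\hat{g}_{0},\cdot,0)=(-1)^{mn}\text{\rm deg}_{B}(\hat{g}_{0},\cdot,0)$ yields the claimed formula.

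The main obstacle is the compactness verification in the first step: uniform $C^{0}$-boundedness of $x\in\overline{\Omega}$ is immediate, but equicontinuity of $x'$ must be assembled piece by piece, combining boundedness of $g_{i}\circ x_{i+1}$ for the first $n-1$ components with the uniform $L^{1}$-majorant of $\tilde{h}(\cdot,x,\lambda)$ on the bounded set $\overline{\Omega}\times\mathopen{[}0,1\mathclose{]}$ for the last. A secondary technicality is that Lemma~\ref{lem-2.3} does not apply directly to $M_{0}$ because condition \eqref{cond-n} generally fails for $M_{0,n}=h_{0}$ on non-constant arguments; this is precisely why the auxiliary intermediate homotopy to $Q_{n}h_{0}$ is needed before the reduction step can be invoked.
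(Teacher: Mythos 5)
Your first reduction is exactly the paper's: the homotopy $\tilde{N}(\cdot,\lambda)$ with last component $\tilde{h}(t,x,\lambda)$, admissible by $(a'_{1})$ and the compactness of the fixed point set of an $L$-completely continuous map on the bounded set $\overline{\Omega}$, gives $D_{L}(L-M,\Omega)=D_{L}(L-M_{0},\Omega)$ via Lemma~\ref{lemma-inv}. The gap is in your second reduction. To run the auxiliary homotopy $(\mathscr{A}_{\vartheta})$ from the proof of Theorem~\ref{th-2.1} you need condition $(ii)$ of that theorem, i.e.\ an a priori bound, away from $\partial\Omega$, for the solutions of
\begin{equation*}
x_{1}'=g_{1}(x_{2}),\ \ldots,\ x_{n-1}'=g_{n-1}(x_{n}),\qquad x_{n}'=\vartheta\, h_{0}(x_{1},\ldots,x_{n}),
\end{equation*}
for \emph{all} $\vartheta\in\mathopen{]}0,1\mathclose{]}$ (recall that $\mathcal{S}'_{\vartheta}=\mathcal{S}_{\vartheta}$ for $\vartheta\neq0$, so homotoping $h_{0}$ to $Q_{n}h_{0}$ is equivalent to shrinking $h_{0}$ by the factor $\vartheta$). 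Hypothesis $(a'_{1})$ only excludes boundary solutions of the family $(\mathscr{C}^{0}_{\lambda})$, whose last component is $\tilde{h}(t,x,\lambda)$; it says nothing about the $\vartheta$-scaled autonomous family, which is a genuinely different deformation. Nonconstant periodic solutions of the scaled system can appear on $\partial\Omega$ for intermediate $\vartheta$ (think of linear oscillators, where scaling the field changes the period of the orbits), so the homotopy you propose is not admissible under the stated hypotheses, and Lemma~\ref{lem-2.3} cannot be reached this way.

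This is precisely why the paper does \emph{not} argue as you do in the second step: it invokes the Capietto--Mawhin--Zanolin formula for autonomous systems (\cite[Theorem~1]{CaMaZa-92}, recalled as Theorem~\ref{th-CMZ} and Theorem~\ref{th-BM} in Appendix~\ref{appendix-A}), which asserts $D_{L}(L-M_{0},\Omega)=(-1)^{mn}\,\text{\rm deg}_{B}(\hat{g}_{0},\Omega\cap\mathbb{R}^{mn},0)$ under the sole assumption that the autonomous system has no $T$-periodic solution on $\partial\Omega$. That identity is a deep theorem in its own right --- its known proofs use a Kupka--Smale approximation argument or the $S^{1}$-equivariant degree of Bartsch and Mawhin --- exactly because the naive homotopy to the averaged field is unavailable for autonomous equations. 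Your sign bookkeeping $\text{\rm deg}_{B}(-\hat{g}_{0},\cdot,0)=(-1)^{mn}\text{\rm deg}_{B}(\hat{g}_{0},\cdot,0)$ is correct, but the route you take to reach $\text{\rm deg}_{B}(-\hat{g}_{0},\cdot,0)$ is blocked; replacing your second step by an appeal to Theorem~\ref{th-BM} repairs the argument and recovers the paper's proof.
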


From this result the next one follows immediately (see \cite[Theorem~2]{CaMaZa-92} and Theorem~\ref{th-1.2} in the introduction).

\begin{theorem}\label{th-4.1}
Let $\Omega$ be an open and bounded set with $\overline{\Omega} \subseteq \text{\rm dom}\,\tilde{M}$.
Suppose that the following conditions hold.
\begin{itemize}
\item For each $\lambda\in\mathopen{[}0,1\mathclose{[}$ there is no $T$-periodic solution of
$(\mathscr{C}^{0}_{\lambda})$
with $x\in \partial\Omega$.
\item $\text{\rm deg}_{B}(\hat{g}_{0},\Omega \cap \mathbb{R}^{mn},0)\neq0$.
\end{itemize}
Then there exists at least a $T$-periodic solution of $(\mathscr{C})$ in $\overline{\Omega}$.
\end{theorem}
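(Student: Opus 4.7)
The plan is to derive Theorem~\ref{th-4.1} as a direct consequence of Lemma~\ref{lem-cycl0} combined with the existence property of the coincidence degree. The strategy is the standard dichotomy argument: either there is already a solution on the boundary and we are done, or we can upgrade the hypothesis of the theorem to the full hypothesis $(a'_{1})$ of Lemma~\ref{lem-cycl0} and then use nontriviality of the degree.

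First, I would dispose of the trivial case. If problem $(\mathscr{C})$ admits a $T$-periodic solution with $x\in \partial\Omega$, then the conclusion holds, so from now on I would assume there is no $T$-periodic solution of $(\mathscr{C})$ lying on $\partial\Omega$. Since $(\mathscr{C})$ coincides with $(\mathscr{C}^{0}_{\lambda})$ for $\lambda=1$ (recall that $\tilde{h}(t,\cdot,1)=h(t,\cdot)$), this remark together with the first hypothesis of the theorem (no solutions on $\partial\Omega$ for $\lambda\in\mathopen{[}0,1\mathclose{[}$) yields that for every $\lambda\in\mathopen{[}0,1\mathclose{]}$ problem $(\mathscr{C}^{0}_{\lambda})$ has no $T$-periodic solution with $x\in \partial\Omega$. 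Hence condition $(a'_{1})$ of Lemma~\ref{lem-cycl0} is fulfilled.

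Applying Lemma~\ref{lem-cycl0} in the functional analytic framework of Section~\ref{section-3}, I obtain
\begin{equation*}
D_{L}(L-M,\Omega) = (-1)^{mn} \, \text{\rm deg}_{B}(\hat{g}_{0},\Omega \cap \mathbb{R}^{mn},0).
\end{equation*}
The second hypothesis of Theorem~\ref{th-4.1} says that the right-hand side is nonzero, therefore $D_{L}(L-M,\Omega)\neq 0$. By the existence property of Mawhin's coincidence degree (for locally compact operators, as extended in \cite{Nu-85,Nu-93,FeZa-pp2015}), this implies that the coincidence equation $Lu=Mu$ has at least one solution $u\in\Omega\subseteq \overline{\Omega}$, which precisely corresponds to a $T$-periodic solution of $(\mathscr{C})$ in $\overline{\Omega}$.

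No step is really difficult here: the only point that deserves attention is the consistency between the setting of Lemma~\ref{lem-cycl0} (which requires $\overline{\Omega}\subseteq\text{\rm dom}\,\tilde{M}$ and an open bounded $\Omega$) and the hypotheses of Theorem~\ref{th-4.1}, which are phrased in the same way. In particular, $\Omega$ being open and bounded in $X=\mathcal{C}(\mathopen{[}0,T\mathclose{]},\mathbb{R}^{mn})$ guarantees that the coincidence degree on $\Omega$ is well-defined as soon as the set of solutions to the homotopic problem is compactly contained in $\Omega$; this is ensured by the no-solutions-on-the-boundary assumption via a standard Ascoli-Arzelà argument, entirely analogous to \textit{Step~1} in the proof of Theorem~\ref{th-mama1}.
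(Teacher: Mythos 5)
Your proof is correct and takes exactly the route the paper intends: the paper states that Theorem~\ref{th-4.1} ``follows immediately'' from Lemma~\ref{lem-cycl0}, and your dichotomy argument (either a solution already lies on $\partial\Omega$, or the boundary condition upgrades to $(a'_{1})$ for all $\lambda\in\mathopen{[}0,1\mathclose{]}$, after which the degree formula and the existence property of the coincidence degree give a solution in $\Omega$) is precisely that omitted argument.
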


In this manner, we reduce part of our problem to the study of the degree of $\hat{g}_{0}$ and for this purpose
we can take advantage of the conditions considered in the previous section. We can thus produce results
analogous to Corollary~\ref{cor-3.3} and Corollary~\ref{cor-3.4}.
With this respect, it is convenient to introduce the function
$h^{*}_{0}\colon\mathbb{R}^{m}\to\mathbb{R}^{m}$ defined as
\begin{equation*}
h^{*}_{0}(\omega):= h_{0}(\omega,0,\ldots,0), \quad \omega\in \mathbb{R}^{m}.
\end{equation*}

The analogous of Corollary~\ref{cor-3.4} is the following, where the open sets $\mathcal{O}_{i}$ are defined as in the previous section.

\begin{corollary}\label{cor-4.1}
Let $\Omega$ be an open and bounded set with $\overline{\Omega} \subseteq \text{\rm dom}\,\tilde{M}$.
Assume $(a'_{1})$ and also the following conditions
\begin{itemize}
\item [$(a'_{6})$] $(h^{*}_{0})^{-1}(0)\cap \partial\mathcal{O}_{1} = \emptyset$;
\item [$(a'_{*})$] $0 \in \mathcal{O}_{i+1}$ and  $g_{i}|_{\overline{\mathcal{O}_{i+1}}}
\colon \overline{\mathcal{O}_{i+1}}\to g_{i}(\overline{\mathcal{O}_{i+1}})\subseteq \mathbb{R}^{m}$
is a homeomorphism with $g_{i}(0) =0$, for all $i=1,\ldots,n-1$.
\end{itemize}
If
\begin{equation*}
\text{\rm deg}_{B}(h_{0}^{*},\mathcal{O}_{1},0)\neq0,
\end{equation*}
then there exists at least a $T$-periodic solution of $(\mathscr{C})$ in $\overline{\Omega}$.
\end{corollary}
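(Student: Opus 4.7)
The plan is to derive this corollary from Lemma~\ref{lem-cycl0} (which handles the coincidence degree side) combined with the bounded-domain variant of Proposition~\ref{prop-3.1} (which handles the finite-dimensional Brouwer degree side), following exactly the pattern by which Corollary~\ref{cor-3.4} was extracted from Theorem~\ref{th-cycl2}. The key observation is that Proposition~\ref{prop-3.1} is really a statement about any vector field of the form $\bigl(g_1(s_2),\ldots,g_{n-1}(s_n),\Psi(s)\bigr)$, so it applies verbatim to $\hat g_0$ with $\Psi=h_0$ (and correspondingly $\Psi^{*}=h_0^{*}$).

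First, I would invoke Lemma~\ref{lem-cycl0}: hypothesis $(a'_1)$ is exactly what that lemma demands, so
\begin{equation*}
D_{L}(L-M,\Omega) = (-1)^{mn}\,\text{\rm deg}_{B}(\hat g_{0},\Omega\cap\mathbb{R}^{mn},0).
\end{equation*}
Thus existence of a $T$-periodic solution of $(\mathscr{C})$ in $\overline{\Omega}$ is reduced to showing that the Brouwer degree on the right-hand side is nonzero.

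Next, I would verify the hypotheses needed to apply the bounded variant of Proposition~\ref{prop-3.1} to $\hat g_0$. From $(a'_*)$ one reads off $0\in\mathcal{O}_{i+1}$ (giving $(c_3)$), $g_i(0)=0$ (giving $(c_4)$), and injectivity of $g_i$ on $\overline{\mathcal{O}_{i+1}}$ together with $g_i(0)=0$ yields $g_i(\omega)\neq 0$ for $\omega\in\overline{\mathcal{O}_{i+1}}\setminus\{0\}$, which is $(c'_5)$. Condition $(a'_6)$ is literally $(c'_6)$ for $h_0^{*}$ in place of $h^{*}$. Hence the product formula applies:
\begin{equation*}
\text{\rm deg}_{B}(\hat g_{0},\Omega\cap\mathbb{R}^{mn},0)=(-1)^{m(n+1)}\,\text{\rm deg}_{B}(h_{0}^{*},\mathcal{O}_{1},0)\cdot\prod_{i=1}^{n-1}\text{\rm deg}_{B}(g_{i},\mathcal{O}_{i+1},0).
\end{equation*}

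To conclude, I use $(a'_*)$ once more: since each $g_i$ restricted to $\overline{\mathcal{O}_{i+1}}$ is a homeomorphism onto its image sending $0$ to $0$, we have $\text{\rm deg}_{B}(g_{i},\mathcal{O}_{i+1},0)=\pm 1$ for every $i=1,\ldots,n-1$. Combined with the standing assumption $\text{\rm deg}_{B}(h_{0}^{*},\mathcal{O}_{1},0)\neq 0$, the product above is nonzero, whence $D_{L}(L-M,\Omega)\neq 0$ and a solution exists in $\overline{\Omega}$. The only genuinely nontrivial point is ensuring that the bounded-domain version of Proposition~\ref{prop-3.1}, announced in the paragraph preceding Corollary~\ref{cor-3.3}, transfers to the autonomous nonlinearity $h_0$; but this is purely formal, since Proposition~\ref{prop-3.1} makes no use of the averaging structure beyond what $h_0^{*}$ inherits automatically.
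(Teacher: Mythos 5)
Your proof is correct and follows exactly the route the paper intends (the paper omits the proof, presenting the corollary as the analogue of Corollary~\ref{cor-3.4} obtained from Lemma~\ref{lem-cycl0} plus the bounded-domain variant of Proposition~\ref{prop-3.1}). Your observation that Proposition~\ref{prop-3.1} transfers to $\hat g_0$ is even simpler than you suggest: since $h_0$ is autonomous, its average $h_0^{\#}$ equals $h_0$ itself, so the proposition applies literally with $h=h_0$.
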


\medskip

All the results presented in this section can be stated also in the case of
an open possibly unbounded set $\Omega \subseteq \text{\rm dom}\,\tilde{M}$.
For this aim, one have to follow the scheme presented in Section~\ref{section-3}
and a modification of \cite[Theorem~1]{BaMa-91} for open (not necessarily bounded) sets,
which is discussed in Appendix~\ref{appendix-A}.

In particular, the analogous of Theorem~\ref{th-4.1} is the following result (corresponding to Theorem~\ref{th-cycl1}).

\begin{theorem}\label{th-4.2}
Let $\Omega \subseteq \text{\rm dom}\,M$ be an open (possibly unbounded) set.
Suppose that the following conditions hold.
\begin{itemize}
\item There exists a compact set $\mathcal{K}\subseteq \Omega$ containing all the possible $T$-periodic solutions of $(\mathscr{C}^{0}_{\lambda})$ for any $\lambda\in\mathopen{[}0,1\mathclose{]}$.
\item $\text{\rm deg}_{B}(\hat{g}_{0},\Omega \cap \mathbb{R}^{mn},0)\neq0$.
\end{itemize}
Then there exists at least a $T$-periodic solution of $(\mathscr{C})$ in $\Omega$.
\end{theorem}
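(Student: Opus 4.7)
The plan is to proceed exactly as in the bounded case (Theorem~\ref{th-4.1}), but replacing the use of \cite[Theorem~1]{CaMaZa-92} by the unbounded-set version discussed in Appendix~\ref{appendix-A}, and relying throughout on the coincidence degree $D_{L}$ for locally compact operators. Since $\Omega$ need not be bounded, I cannot appeal to the classical Mawhin continuation theorem directly; instead I must work at the level of degree $D_{L}(L-M,\Omega)$ and invoke the appropriate versions of homotopy invariance, reduction, and existence.

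First I would set up the functional analytic framework exactly as in Section~\ref{section-3}: take $X_i = \mathcal{C}(\mathopen{[}0,T\mathclose{]},\mathbb{R}^m)$, $Z_i = L^1(\mathopen{[}0,T\mathclose{]},\mathbb{R}^m)$, let $L_i$ be the periodic derivative operator, and let $\tilde{M}$ be the $L$-completely continuous Nemytskii operator associated with the Carath\'{e}odory vector field $(g_1(x_2),\ldots,g_{n-1}(x_n),\tilde{h}(t,x_1,\ldots,x_n,\lambda))$. Then a $T$-periodic solution of $(\mathscr{C}^{0}_{\lambda})$ corresponds to a solution of the coincidence equation $Lu = \tilde{M}(u,\lambda)$.

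The core homotopy step is to apply Lemma~\ref{lemma-inv} with the $L$-completely continuous homotopy $\tilde{M}(\cdot,\lambda)$, $\lambda\in\mathopen{[}0,1\mathclose{]}$, from $M = \tilde{M}(\cdot,1)$ to the Nemytskii operator $M_{0}$ associated with the autonomous field $\hat{g}_{0}$. For this I must verify that the solution set
\begin{equation*}
\mathcal{S}:=\bigcup_{\lambda\in\mathopen{[}0,1\mathclose{]}}\bigl{\{}u\in\Omega\cap\text{\rm dom}\,L\colon Lu = \tilde{M}(u,\lambda)\bigr{\}}
\end{equation*}
is a compact subset of $\Omega$. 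This is precisely where the first hypothesis of the theorem is used: $\mathcal{S}\subseteq \mathcal{K}$, and $\mathcal{K}$ is compact by assumption. Since $\mathcal{S}$ is also closed in $X$ (by the standard continuity arguments already used in the proof of Theorem~\ref{th-mama1}, \emph{Step~1}), it is compact. Hence Lemma~\ref{lemma-inv} yields
\begin{equation*}
D_{L}(L-M,\Omega) = D_{L}(L-M_{0},\Omega).
\end{equation*}

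Next I would compute $D_{L}(L-M_{0},\Omega)$. Since the autonomous system has the cyclic feedback form with the last component being $h_{0}(x_{1},\ldots,x_{n})$ (which in general does not take values in $\text{\rm coker}\,L_{n}$), I would invoke the extended version of \cite[Theorem~1]{BaMa-91} for open, not necessarily bounded, sets, stated in Appendix~\ref{appendix-A}, together with the argument already carried out in Lemma~\ref{lem-cycl} (which itself rests on Theorem~\ref{th-2.1} and thus on the Reduction Formula). The compactness of $\mathcal{S}$ covers the case $\lambda = 0$ and therefore automatically gives the compactness of the zero set $\hat{g}_{0}^{-1}(0)\cap\Omega$ in $\Omega$, so that $\text{\rm deg}_{B}(\hat{g}_{0},\Omega\cap\mathbb{R}^{mn},0)$ is well-defined and
\begin{equation*}
D_{L}(L-M_{0},\Omega) = (-1)^{mn}\,\text{\rm deg}_{B}(\hat{g}_{0},\Omega\cap\mathbb{R}^{mn},0).
\end{equation*}
Combining this with the second hypothesis gives $D_{L}(L-M,\Omega)\neq 0$, and the existence property of the coincidence degree for locally compact operators on open sets (cf.~\cite[Appendix~A]{FeZa-pp2015}) produces a $T$-periodic solution of $(\mathscr{C})$ in $\Omega$.

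The main obstacle, as I see it, is the verification that the coincidence degree machinery (homotopy invariance, reduction to the Brouwer degree on $\ker L$, and solution-existence) really extends from the classical bounded-open-set formulation to the case of arbitrary open sets with the only compactness coming from the a priori bound on $\mathcal{S}$. The delicate point is that closedness of $\mathcal{S}$ in $X$ (not just in $\overline{\Omega}$) must be established, so that $\mathcal{S}\subseteq\mathcal{K}\subseteq\Omega$ really is compact; this is where I would carefully mimic the Ascoli--Arzel\`{a} argument from \emph{Step~1} of the proof of Theorem~\ref{th-mama1}. Once that is in place, the rest is a transcription of the proof of Theorem~\ref{th-4.1} with the bounded-domain citations replaced by their unbounded analogues from Appendix~\ref{appendix-A}.
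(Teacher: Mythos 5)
Your proposal is correct and follows exactly the route the paper intends (and leaves implicit): homotopy invariance of $D_{L}$ for locally compact operators along $\tilde{M}(\cdot,\lambda)$, with compactness of the full solution set supplied by the a priori bound $\mathcal{S}\subseteq\mathcal{K}$ plus closedness from $L$-complete continuity, followed by the computation of $D_{L}(L-M_{0},\Omega)$ via Theorem~\ref{th-CMZ-BM} of Appendix~\ref{appendix-A} and the existence property of the degree. The only superfluous element is the additional appeal to Lemma~\ref{lem-cycl}/Theorem~\ref{th-2.1} in the autonomous step: Theorem~\ref{th-CMZ-BM} applied with $d=mn$ already yields $D_{L}(L-M_{0},\Omega)=(-1)^{mn}\,\text{\rm deg}_{B}(\hat{g}_{0},\Omega\cap\mathbb{R}^{mn},0)$ directly, since the $\lambda=0$ slice of $\mathcal{S}$ is precisely the set of $T$-periodic solutions of the autonomous system.
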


\medskip

Now, at this point, we can repeat (almost step by step) the results obtained in Section~\ref{section-3}
for system \eqref{eq-3.1}. The only difference is that the continuation theorem will make use of an homotopy
leading system \eqref{syst-cycl} to an autonomous system of the form
\begin{equation*}
\begin{cases}
\, x_{1}' = \varphi_{1}^{-1}(x_{2}) \\
\, x_{2}' = \varphi_{2}^{-1}(x_{3}) \\
\, \quad \vdots \\
\, x_{n-1}' = \varphi_{n-1}^{-1}(x_{n}) \\
\, x_{n}' = h_{0}(x_{1},\ldots,x_{n}).
\end{cases}
\end{equation*}
In this setting, the analogous of Theorem~\ref{th-3.3} is the next result, where the function $\tilde{h}$ is defined as in \eqref{eq-contaut}.

\begin{theorem}\label{th-4.3}
Let $\Omega \subseteq \mathcal{C}(\mathopen{[}0,T\mathclose{]},\mathbb{R}^{mn})$ be an open and bounded set
such that $0\in \mathcal{O}_{i}$ for all $i=2,\ldots,n$.
Suppose that
\begin{itemize}
\item for each $\lambda\in\mathopen{[}0,1\mathclose{[}$ there is no
$T$-periodic solutions of
\begin{equation*}
\begin{cases}
\, x_{1}' = \varphi_{1}^{-1}(x_{2}) \\
\, x_{2}' = \varphi_{2}^{-1}(x_{3}) \\
\, \quad \vdots \\
\, x_{n-1}' = \varphi_{n-1}^{-1}(x_{n}) \\
\, x_{n}' = \tilde{h}(t,x_{1},\ldots,x_{n},\lambda)
\end{cases}
\end{equation*}
with $x\in \partial\Omega$;
\item $h^{*}_{0}(\omega) \neq 0$, for every $\omega\in \partial\mathcal{O}_{1}$ and $\text{\rm deg}_{B}(h^{*}_{0},\mathcal{O}_{1},0)\neq 0$.
\end{itemize}
Then there exists at least a $T$-periodic solution $x(t)$ of \eqref{syst-cycl} in $\overline{\Omega}$.
\end{theorem}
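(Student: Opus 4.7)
The strategy is to reduce Theorem~\ref{th-4.3} to a direct application of Corollary~\ref{cor-4.1} (equivalently, Theorem~\ref{th-4.1}), by setting $g_{i} := \varphi_{i}^{-1}$ for $i=1,\ldots,n-1$ in the abstract cyclic system $(\mathscr{C})$, and interpreting the pair $(h,h_{0})$, together with the $L^{1}$-Carath\'{e}odory connecting homotopy $\tilde{h}$, as the data for the associated autonomous homotopy $(\mathscr{C}^{0}_{\lambda})$. The underlying abstract framework (spaces $X_{i}$, $Z_{i}$, operators $L_{i}$, projections $P_{i}, Q_{i}$ and the Nemytskii operator $\tilde{M}$) is the one already fixed in Section~\ref{section-3}, so no new functional-analytic setup is required.

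First I would dispose of the trivial situation: if system \eqref{syst-cycl} (which corresponds to $\lambda = 1$ in the homotopy) admits a $T$-periodic solution $x$ with $x \in \partial\Omega$, then the conclusion is already established. Otherwise, combining this observation with the first bullet of the theorem, we obtain that for every $\lambda \in \mathopen{[}0,1\mathclose{]}$ there is no $T$-periodic solution of the homotopic system in $\partial\Omega$. This is precisely hypothesis $(a'_{1})$ of Corollary~\ref{cor-4.1}, now verified with the choice $g_{i} = \varphi_{i}^{-1}$.

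Next I would verify $(a'_{*})$. By assumption, each $\varphi_{i} \colon \mathbb{R}^{m} \to \mathbb{R}^{m}$ is a homeomorphism with $\varphi_{i}(0) = 0$, hence so is $g_{i} := \varphi_{i}^{-1}$, and $g_{i}(0) = 0$. In particular, the restriction of $g_{i}$ to $\overline{\mathcal{O}_{i+1}}$ is a homeomorphism onto its image $g_{i}(\overline{\mathcal{O}_{i+1}}) \subseteq \mathbb{R}^{m}$. The requirement $0 \in \mathcal{O}_{i+1}$ for $i=1,\ldots,n-1$ is the standing hypothesis of the theorem (recall $\mathcal{O}_{i+1} = \Omega_{i+1} \cap \mathbb{R}^{m}$).

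Finally, the two remaining conditions in Corollary~\ref{cor-4.1}, namely $(a'_{6})$, i.e.~$(h^{*}_{0})^{-1}(0) \cap \partial\mathcal{O}_{1} = \emptyset$, and the nontriviality $\text{\rm deg}_{B}(h^{*}_{0},\mathcal{O}_{1},0) \neq 0$, are literally the two parts of the second bullet in the theorem statement. Thus all hypotheses of Corollary~\ref{cor-4.1} are satisfied, and its conclusion produces a $T$-periodic solution of \eqref{syst-cycl} in $\overline{\Omega}$. No significant analytic obstacle is expected: the only point demanding care is the preliminary reduction that promotes the $\mathopen{[}0,1\mathclose{[}$-range of the first bullet to the full $\mathopen{[}0,1\mathclose{]}$-range required by $(a'_{1})$, and this is handled by the standard dichotomy described above. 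Everything else is pure bookkeeping between the abstract $g_{i}$-notation of Section~\ref{section-4} and the $\varphi_{i}^{-1}$-notation of the concrete homotopy.
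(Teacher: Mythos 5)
Your proposal is correct and coincides with the paper's (omitted) argument: Theorem~\ref{th-4.3} is stated there precisely as a direct consequence of Corollary~\ref{cor-4.1}, applied with $g_{i}=\varphi_{i}^{-1}$ and with the standard dichotomy handling the passage from the $\mathopen{[}0,1\mathclose{[}$ hypothesis to the full-range condition $(a'_{1})$. All the verifications of $(a'_{1})$, $(a'_{6})$ and $(a'_{*})$ are as you describe, so nothing further is needed.
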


The proof is omitted as it is a direct consequence of Corollary~\ref{cor-4.1}.

\medskip

As in the final part of Section~\ref{section-3}, we propose an application to the second order $\phi$-Laplacian equation
\eqref{eq-phik}. In the present case, instead of equation \eqref{eq-phiklam}, we consider the system
\begin{equation}\label{eq-klau}
\bigl{(}\phi(u')\bigr{)}' + \tilde{k}(t,u,u',\lambda) = 0,
\end{equation}
where $\tilde{k} \colon \mathopen{[}0,T\mathclose{]} \times \mathbb{R}^{m} \times \mathbb{R}^{m} \times \mathopen{[}0,1\mathclose{]} \to \mathbb{R}^{m}$
is an $L^{1}$-Carath\'{e}odory function such that
\begin{equation*}
\tilde{k}(t,x_{1},x_{2},1) = k(t,x_{1},x_{2}), \qquad \tilde{k}(t,x_{1},x_{2},0) = k_{0}(x_{1},x_{2}),
\end{equation*}
with $k_{0} \colon \mathbb{R}^{m} \times \mathbb{R}^{m} \to \mathbb{R}^{m}$ an autonomous field.

Now we are in a position to prove the following continuation theorem which corresponds to \cite[Theorem~4.1]{MaMa-98}
(but without any additional assumption on the homeomorphism $\phi$).

\begin{theorem}\label{th-mama2}
Let $\mathcal{U}$ be an open and bounded set in $\mathcal{C}^{1}_{T}$ such that the following conditions hold.
\begin{itemize}
\item For each $\lambda\in\mathopen{[}0,1\mathclose{[}$ the problem
\begin{equation*}
\bigl{(}\phi(u')\bigr{)}' + \tilde{k}(t,u,u',\lambda) = 0, \quad u(0) = u(T), \quad u'(0) = u'(T),
\end{equation*}
has no solution on $\partial\mathcal{U}$.
\item The Brouwer degree
\begin{equation*}
\text{\rm deg}_{B}(k_{0}(\cdot,0),\mathcal{U}\cap \mathbb{R}^{m},0)\neq 0.
\end{equation*}
\end{itemize}
Then, problem \eqref{eq-phik}-\eqref{2BC} has at least a solution in $\overline{\mathcal{U}}$.
\end{theorem}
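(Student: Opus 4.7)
The plan is to follow, step by step, the three-stage strategy used in the proof of Theorem~\ref{th-mama1}, replacing the appeal to Theorem~\ref{th-3.3} by an appeal to its autonomous-homotopy counterpart Theorem~\ref{th-4.3}. The task is to convert an open bounded set $\mathcal{U}$ in the $\mathcal{C}^{1}_{T}$-norm into a suitable open bounded set $\Omega \subseteq \mathcal{C}(\mathopen{[}0,T\mathclose{]},\mathbb{R}^{2m})$ in the $\sup$-norm to which Theorem~\ref{th-4.3} applies for the first-order cyclic system
\[
x_{1}' = \phi^{-1}(x_{2}), \qquad x_{2}' = -\tilde{k}\bigl(t, x_{1}, \phi^{-1}(x_{2}),\lambda\bigr)
\]
in $\mathbb{R}^{2m}$ equivalent to \eqref{eq-klau}. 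Assuming without loss of generality that problem \eqref{eq-phik}--\eqref{2BC} has no solution on $\partial\mathcal{U}$ (otherwise we are done), I would first show that
\[
\mathcal{K} := \bigcup_{\lambda\in\mathopen{[}0,1\mathclose{]}} \bigl\{ u\in\overline{\mathcal{U}} : (\phi(u'))' + \tilde{k}(t,u,u',\lambda) = 0,\, u(0)=u(T),\, u'(0)=u'(T) \bigr\}
\]
is compact in $\mathcal{C}^{1}_{T}$ and contained in $\mathcal{U}$. The Ascoli--Arzel\`{a} argument from Step~1 of the proof of Theorem~\ref{th-mama1} transfers essentially verbatim, via uniform $L^{1}$-domination of $\tilde{k}(t,u_{n}(t),u'_{n}(t),\lambda_{n})$ and the uniform continuity of $\phi^{-1}$ on bounded sets; the no-solution hypothesis (on $[0,1[$ plus $\lambda=1$ by the reduction above) places every $\mathcal{C}^{1}_{T}$-accumulation point inside $\mathcal{U}$.

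Next, I would cover $\mathcal{K}$ by finitely many product-form $\mathcal{C}^{1}_{T}$-balls
\[
\mathcal{U}^{\alpha} := \bigl\{ u \in \mathcal{C}^{1}_{T} : u \in \mathcal{U}^{\alpha}_{1},\, u' \in \mathcal{U}^{\alpha}_{2}\bigr\} \subseteq \mathcal{U}, \qquad \alpha = 1,\ldots,\ell,
\]
where each $\mathcal{U}^{\alpha}_{j}$ is open and bounded in $\mathcal{C}(\mathopen{[}0,T\mathclose{]},\mathbb{R}^{m})$ in the sup-norm, arranging the cover to enclose every constant zero of $k_{0}(\cdot, 0)$ lying in $\mathcal{U}\cap\mathbb{R}^{m}$. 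The non-triviality of $\text{\rm deg}_{B}(k_{0}(\cdot,0),\mathcal{U}\cap\mathbb{R}^{m},0)$ forces $\mathcal{K}$ to contain at least one such constant solution of the $\lambda=0$ problem, so that $0 \in \mathcal{U}^{\alpha}_{2}$ for some $\alpha$. Setting $\Omega := \bigcup_{\alpha} \mathcal{U}^{\alpha}_{1} \times \phi(\mathcal{U}^{\alpha}_{2}) \subseteq \mathcal{C}(\mathopen{[}0,T\mathclose{]},\mathbb{R}^{2m})$ and taking the homotopy field's last component to be $\tilde{h}(t,x_{1},x_{2},\lambda) := -\tilde{k}(t,x_{1},\phi^{-1}(x_{2}),\lambda)$, the identity $\phi(0) = 0$ yields $h_{0}^{*}(\omega) = -k_{0}(\omega, 0)$, so that $\text{\rm deg}_{B}(h_{0}^{*},\mathcal{O}_{1},0) = \pm\text{\rm deg}_{B}(k_{0}(\cdot,0),\mathcal{U}\cap\mathbb{R}^{m},0)\neq 0$ by the choice of the cover. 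Theorem~\ref{th-4.3} then delivers a $T$-periodic solution $x = (x_{1},x_{2}) \in \overline{\Omega}$ of the cyclic system, and $u := x_{1}$ solves \eqref{eq-phik}--\eqref{2BC} in $\overline{\mathcal{U}}$.

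The principal obstacle, exactly as in Theorem~\ref{th-mama1}, is the mismatch between the strictly finer $\mathcal{C}^{1}_{T}$-norm defining $\mathcal{U}$ and the coarser sup-norm natural to Theorem~\ref{th-4.3}; the product-form covering argument of Step~2 is the device that bridges the two. A minor subsidiary issue is the alignment of the Brouwer degree across the finite cover, handled by the enlargement described above. No new difficulties beyond those already resolved in the proof of Theorem~\ref{th-mama1} are anticipated, since the autonomous character of $h_{0}$ is exploited exclusively through Theorem~\ref{th-4.3} and does not interact with the $\mathcal{C}^{1}_{T}$-to-$\sup$ passage.
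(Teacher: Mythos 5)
Your proposal is correct and follows essentially the same route as the paper's own proof: the three-stage scheme of Theorem~\ref{th-mama1} (compactness of the solution set in $\mathcal{C}^{1}_{T}$, the product-form special case, and the finite covering to bridge the $\mathcal{C}^{1}_{T}$- and $\sup$-norms), with Theorem~\ref{th-4.3} replacing Theorem~\ref{th-3.3}. Your observation that the no-solution hypothesis on $\mathopen{[}0,1\mathclose{[}$ handles the $\lambda=0$ limit directly (so the constant-solution argument of Theorem~\ref{th-mama1} is not needed there) is exactly the ``obvious change'' the paper alludes to.
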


\begin{proof}
If there exists a solution in $\partial\mathcal{U}$, we are done.
Then, for the rest of the proof, we assume that problem \eqref{eq-phik}-\eqref{2BC} has no solution in $\partial\mathcal{U}$.
We split our argument into three steps, which are the same as in the proof of Theorem~\ref{th-mama1}.

\smallskip

\noindent
\textit{Step~1. Compactness. }
The set
\begin{equation*}
\mathcal{K} := \bigcup_{\lambda\in\mathopen{]}0,1\mathclose{]}}\Bigl{\{}u\in\overline{\mathcal{U}}\colon \bigl{(}\phi(u')\bigr{)}' +
\tilde{k}(t,u,u',\lambda) = 0 \Bigr{\}}
\end{equation*}
is a compact subset of $\mathcal{U}$. To check this claim we just repeat (with obvious changes) the proof of
\textit{Step~1} of Theorem~\ref{th-mama1}.

\smallskip

\noindent
\textit{Step~2. A special case for the domain. }
Suppose that there exist two open bounded sets
$\mathcal{U}_{1},\mathcal{U}_{2}\in\mathcal{C}(\mathopen{[}0,T\mathclose{]},\mathbb{R}^{m})$ with $0\in\mathcal{U}_{2}$ such that
\begin{equation*}
\mathcal{U} = \bigl{\{}u\in\mathcal{C}^{1}_{T} \colon u\in\mathcal{U}_{1}, \, u'\in\mathcal{U}_{2} \bigr{\}}.
\end{equation*}
We write \eqref{eq-klau} as an equivalent first order cyclic system in $\mathbb{R}^{2m}$ of the form
\begin{equation*}
\begin{cases}
\, x'_{1} = \phi^{-1}(x_{2}) \\
\, x'_{2} = \tilde{h}(t,x_{1},x_{2},\lambda),
\end{cases}
\end{equation*}
where $\tilde{h}(t,x_{1},x_{2},\lambda):= -\tilde{k}(t,x_{1},\phi^{-1}(x_{2}),\lambda)$.

In the Banach space $X:= \mathcal{C}(\mathopen{[}0,T\mathclose{]},\mathbb{R}^{2m})$ we define the open and bounded set
\begin{equation*}
\Omega := \mathcal{U}_{1} \times \phi(\mathcal{U}_{2}) = \bigl{\{} x=(x_{1},x_{2})
\in X \colon x_{1} \in \mathcal{U}_{1}, \, \phi^{-1}(x_{2}) \in \mathcal{U}_{2} \bigr{\}}.
\end{equation*}
We can now apply Theorem~\ref{th-4.3} (analogously as we applied Theorem~\ref{th-3.3} in \textit{Step~2} in the proof of Theorem~\ref{th-mama1})
and obtain the existence of at least a $T$-periodic solution $(x_{1}(t),x_{2}(t))\in \Omega$ of
\begin{equation*}
\begin{cases}
\, x'_{1} = \phi^{-1}(x_{2}) \\
\, x'_{2} = h(t,x_{1},x_{2}),
\end{cases}
\end{equation*}
for $h(t,x_{1},x_{2}):= -k(t,x_{1},\phi^{-1}(x_{2}))$. Then, the first component $u:=x_{1}$
of such a solution is a solution of \eqref{eq-phik} with $u\in \mathcal{U}$.

\smallskip

\noindent
\textit{Step~3. General case. }
Let $\mathcal{U}\in\mathcal{C}^{1}_{T}$ be an open and bounded set.
Recalling \textit{Step~1}, we know that $\mathcal{K}$ is a compact subset of $\mathcal{U}$ and we cover it by a finite
number of open sets $\mathcal{U}^{\alpha}$ as in \eqref{sets-u}. From this point on, the proof follows the same
lines as those of \textit{Step~3} in the proof of Theorem~\ref{th-mama1} and we can conclude.
\end{proof}

\section{Periodic solutions to Hartman-type perturbations of the $\phi$-Laplacian operator}\label{section-5}

As an application of the previous continuation results, we propose a further variant of the Hartman-Knobloch
theorem for the $T$-periodic problem associated with the vector second order differential equation
\begin{equation}\label{eq-Ma}
\bigl{(}\phi(u')\bigr{)}' + f(t,u) = 0,
\end{equation}
where $f \colon \mathopen{[}0,T\mathclose{]} \times \mathbb{R}^{m} \to \mathbb{R}^{m}$
is a continuous function and $\phi \colon \mathbb{R}^{m} \to \mathbb{R}^{m}$ is a homeomorphism with $\phi(0)=0$.

A classical result of Hartman \cite{Ha-60} guarantees the existence of a solution for the two-point (Dirichlet) boundary value problem
associated with
\begin{equation}\label{eq-5.1}
u'' + f(t,u) = 0,
\end{equation}
by assuming the existence of a constant $R > 0$ such that
\begin{equation}\label{Hcond}
\langle f(t,\xi),\xi\rangle \leq 0, \quad \forall \, t\in \mathopen{[}0,T\mathclose{]},
\; \forall \, \xi\in\mathbb{R}^{m} \text{ with } \|\xi\|_{\mathbb{R}^{m}}= R.
\end{equation}
Under the same conditions,  Knobloch in \cite{Kn-71} obtained an existence result for the periodic problem and with a Lipschitzian $f$
(see also \cite{RoMa-73}, dealing with a continuous $f$). Both Hartman and Knobloch results apply also to
more general systems of the form
\begin{equation}\label{eq-5.2}
u'' + f(t,u,u') = 0,
\end{equation}
under suitable growth conditions on $u'$ of Bernstein-Nagumo type.
For simplicity, we do not pursue our study in this direction and refer to \cite{DCHa-06,Ma-81}
for interesting surveys and information on this topic.

In \cite{Ma-00}, Mawhin extended the theorems for equation \eqref{eq-5.1}
to systems of the form \eqref{eq-Ma} for a $p$-Laplacian differential operator, namely for $\phi(\xi) = \psi_{p}(\xi)$,
where
\begin{equation}\label{eq-psip}
\psi_{p}(\xi):= |\xi|^{p-2}\xi, \; \text{ if } \, \xi\in \mathbb{R}^{m}\setminus\{0\}, \qquad \psi_{p}(0) = 0,
\end{equation}
for $p > 1$ (see also \cite{Ma-01}). The corresponding results for system \eqref{eq-5.2} were generalized to the $p$-Laplacian operator by
Mawhin and Ure\~{n}a in \cite{MaUr-02}.

We plan now to present a version of Knobloch theorem, limited to the case of \eqref{eq-5.1}, for system \eqref{eq-Ma}
and involving a class of nonlinear differential operators which are not included in those studied in \cite{MaMa-98, Ma-01}.
In any case, we shall borrow
some arguments already developed in \cite{Ma-00} and \cite{MaUr-02} in the case of $\phi = \psi_{p}$.
Thus our computations are in debt of those performed in the above quoted papers; indeed
we show that they can be
reproduced in our more general setting, by virtue of the continuation theorems developed in the
previous sections.

\medskip

In the following lemma we obtain an a~priori bound for the derivative of the solution to a parameter-dependent
equation of the form
\begin{equation}\label{eq-lambda}
\bigl{(}\phi(u')\bigr{)}' + \tilde{f}(t,u,\lambda) = 0,
\end{equation}
where
$\tilde{f} \colon \mathopen{[}0,T\mathclose{]} \times \mathbb{R}^{m}
\times \mathopen{[}0,1\mathclose{]} \to \mathbb{R}^{m}$ is a continuous function.

Using the continuity of $\tilde{f}$, for any constant $d > 0$ we define
\begin{equation*}
C_{d} := \max \bigl{\{}\|f(t,\xi,\lambda)\|_{\mathbb{R}^{m}} \colon t\in\mathopen{[}0,T\mathclose{]}, \, \xi\in B[0,d], \, \lambda \in\mathopen{[}0,1\mathclose{]} \bigr{\}}.
\end{equation*}

\begin{lemma}\label{lem-apriori}
Let us suppose that
\begin{equation*}
\lim_{|\xi|\to+\infty} \langle\phi(\xi),\xi\rangle = +\infty.
\end{equation*}
Then, for every $d>0$ there exists $M_{d}>0$ such that
$\|u'\|_{\infty}<M_{d}$, whenever $u(t)$ is a
$T$-periodic solution $u(t)$ of \eqref{eq-lambda}, for some $\lambda\in \mathopen{[}0,1\mathclose{]}$,  with $\|u\|_{\infty}\leq d$.
\end{lemma}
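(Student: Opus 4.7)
The plan is to first bound $\phi(u')$ uniformly on $[0,T]$ and only then invert the homeomorphism. Since $(\phi(u'))'(t) = -\tilde f(t, u(t), \lambda)$ is continuous and pointwise bounded in norm by $C_d$ as soon as $\|u\|_\infty \le d$, the map $t \mapsto \phi(u'(t))$ is $C_d$-Lipschitz on $[0,T]$. Thus, once we control $|\phi(u'(t_0))|$ at a single point $t_0$, we obtain the uniform bound $|\phi(u'(t))| \le |\phi(u'(t_0))| + T C_d$ for all $t \in [0,T]$, and the continuity of $\phi^{-1}$ on compact sets delivers the desired $M_d$. The whole issue is therefore to produce such a point $t_0$ together with an a priori bound on $|u'(t_0)|$ depending only on $d$.

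To produce $t_0$, I would test the differential equation against $u$ itself and integrate over $[0,T]$. A $T$-periodic solution extends to an absolutely continuous solution on $\mathbb{R}$, which forces $\phi(u')(0) = \phi(u')(T)$, hence $u'(0) = u'(T)$; together with $u(0) = u(T)$, this makes the boundary term in the integration by parts vanish, yielding
\begin{equation*}
\int_0^T \langle \phi(u'(t)), u'(t) \rangle \, dt = \int_0^T \langle \tilde f(t, u(t), \lambda), u(t) \rangle \, dt \le T d C_d.
\end{equation*}
Since the integrand on the left-hand side is continuous in $t$, the mean value theorem for integrals yields some $t_0 \in [0,T]$ with $\langle \phi(u'(t_0)), u'(t_0) \rangle \le d C_d$.

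At this stage the coercivity hypothesis enters decisively. The assumption $\lim_{|\xi| \to \infty} \langle \phi(\xi), \xi \rangle = +\infty$ provides $R = R(d) > 0$ such that $\langle \phi(\xi), \xi \rangle > dC_d$ for all $|\xi| > R$. Hence $|u'(t_0)| \le R$, and setting $P(R) := \max_{|\xi| \le R} |\phi(\xi)|$ (finite by continuity of $\phi$) gives $|\phi(u'(t_0))| \le P(R)$. Combined with the Lipschitz estimate from the first paragraph, $|\phi(u'(t))| \le P(R) + T C_d$ for every $t \in [0,T]$, so choosing
\begin{equation*}
M_d := 1 + \max\bigl\{\, |\phi^{-1}(\eta)| : |\eta| \le P(R) + T C_d \,\bigr\}
\end{equation*}
finishes the proof. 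The only delicate step is the justification of $u'(0) = u'(T)$ that legitimizes the integration by parts; this is not an extra assumption but a regularizing consequence of the equation together with the paper's convention that a $T$-periodic solution admits an absolutely continuous periodic extension to $\mathbb{R}$.
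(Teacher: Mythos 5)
Your proof is correct and follows essentially the same route as the paper: integrate $\langle(\phi(u'))',u\rangle$ over a period to bound the mean of $\langle\phi(u'),u'\rangle$ by $dC_d$, locate a point $t_0$ with $|u'(t_0)|$ controlled via the coercivity hypothesis, and then propagate a bound on $\phi(u')$ over $[0,T]$ before inverting the homeomorphism. Your extra care about the boundary term (i.e.\ $u'(0)=u'(T)$) is fine but already guaranteed, since the $T$-periodic problem here carries the boundary condition \eqref{2BC} by definition.
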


\begin{proof}
Let $u(t)$ be a $T$-periodic solution of \eqref{eq-lambda} with $\|u\|_{\infty}\leq d$.
We divide the proof into two steps.

\smallskip

\noindent
\textit{Step~1. } We claim that there exist a point $t_{0}\in\mathopen{[}0,T\mathclose{]}$ and a constant
$L_{d}>0$ such that $\|u'(t_{0})\|_{\mathbb{R}^{m}}\leq L_{d}$.

By multiplying equation \eqref{eq-lambda}
by $u(t)$ and by integrating in $\mathopen{[}0,T\mathclose{]}$, we obtain
\begin{equation*}
- \int_{0}^{T} \langle\bigl{(}\phi(u'(t))\bigr{)}',u(t)\rangle~\!dt = \int_{0}^{T} \langle\tilde{f}(t,u(t),\lambda),u(t)\rangle~\!dt
\end{equation*}
and thus
\begin{equation*}
\dfrac{1}{T}\int_{0}^{T} \langle\phi(u'(t)),u'(t)\rangle~\!dt
= \dfrac{1}{T} \int_{0}^{T} \langle\tilde{f}(t,u(t),\lambda),u(t)\rangle~\!dt \leq d C_{d}.
\end{equation*}
By the mean value theorem, there exists $t_{0}\in\mathopen{[}0,T\mathclose{]}$ such that
\begin{equation*}
\langle\phi(u'(t_{0})),u'(t_{0})\rangle \leq d C_{d}.
\end{equation*}
From the hypothesis of the lemma, there exists $L_{d}>0$ such that if
 $\|\xi\|_{\mathbb{R}^{n}}>L_{d}$ then $\langle\phi(\xi),\xi\rangle>d C_{d}$.
Therefore we conclude that $\|u'(t_{0})\|_{\mathbb{R}^{n}}\leq L_{d}$.

\smallskip

\noindent
\textit{Step~2. }
By integrating \eqref{eq-lambda} in $\mathopen{[}t_{0},t\mathclose{]}$, for $t\in \mathopen{[}0,T\mathclose{]}$,
we deduce
\begin{equation*}
\phi(u'(t)) = \phi(u'(t_{0})) - \int_{t_{0}}^{t} \tilde{f}(s,u(s),\lambda)~\!ds
\end{equation*}
and thus
\begin{equation*}
\|\phi(u'(t))\|_{\mathbb{R}^{m}} \leq \|\phi(u'(t_{0}))\|_{\mathbb{R}^{m}} + T C_{d}.
\end{equation*}
Next, by defining
\begin{equation*}
K_{d} := \sup\bigl{\{} \|\phi(\xi)\|_{\mathbb{R}^{m}}\colon
\xi\in\mathbb{R}^{m}, \, \|\xi\|_{\mathbb{R}^{m}}\leq L_{d}
\bigr{\}},
\end{equation*}
we have
\begin{equation*}
\|\phi(u'(t))\|_{\mathbb{R}^{m}} \leq K_{d} + T C_{d}, \quad \forall \, t\in \mathopen{[}0,T\mathclose{]},
\end{equation*}
and hence
\begin{equation*}
u'(t) \in \phi^{-1}(B[0,K_{d} + T C_{d}]), \quad \forall \, t\in \mathopen{[}0,T\mathclose{]}.
\end{equation*}
Now, it is sufficient to take as $M_{d}>0$ any real number such that
\begin{equation*}
\phi^{-1}(B[0,K_{d} + T C_{d}]) \subseteq B(0,M_{d}).
\end{equation*}
The lemma is thus proved.
\end{proof}

The following existence result holds.

\begin{theorem}\label{th-5.1}
Let $A\colon\mathbb{R}^{m}\setminus\{0\}\to\mathopen{]}0,+\infty\mathclose{[}$ be a continuous function and let
\begin{equation}\label{eq-Aphi}
\phi(\xi) := A(\xi) \xi, \; \text{ if } \, \xi\in \mathbb{R}^{m}\setminus\{0\}, \qquad \phi(0) = 0.
\end{equation}
Suppose that $\phi \colon \mathbb{R}^{m} \to \mathbb{R}^{m}$ is a homeomorphism.
If there exists $R>0$ such that
Hartman's condition \eqref{Hcond} holds,
then there exists at least a $T$-periodic solution $u(t)$ of \eqref{eq-Ma} such that $\|u(t)\|_{\mathbb{R}^{m}}\leq R$
for all $t\in \mathopen{[}0,T\mathclose{]}$.
\end{theorem}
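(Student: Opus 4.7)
The plan is to apply Theorem~\ref{th-mama2} after replacing $f$ by its radial truncation and using an ad hoc autonomous homotopy. I set $\tilde{f}(t,\xi):=f(t,\xi)$ for $\|\xi\|\leq R$ and $\tilde{f}(t,\xi):=f(t,R\xi/\|\xi\|)$ for $\|\xi\|>R$. Then $\tilde{f}$ is continuous and condition \eqref{Hcond} propagates to $\langle\tilde{f}(t,\xi),\xi\rangle\leq0$ for every $\xi$ with $\|\xi\|\geq R$. As homotopy I take
\begin{equation*}
\tilde{k}(t,u,v,\lambda):=\lambda\,\tilde{f}(t,u)-(1-\lambda)\,u,
\end{equation*}
so that $k(t,u,v)=\tilde{f}(t,u)$ and $k_{0}(u,v)=-u$. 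Since the map $\omega\mapsto-\omega$ has Brouwer degree $(-1)^{m}\neq0$ on every origin-centered ball, the degree requirement of Theorem~\ref{th-mama2} is automatic.

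The main step is the a~priori estimate: for every $\lambda\in\mathopen{[}0,1\mathclose{[}$, every $T$-periodic solution of $(\phi(u'))'+\tilde{k}(t,u,u',\lambda)=0$ satisfies $\|u\|_{\infty}\leq R$. To prove it I consider
\begin{equation*}
G(t):=\langle u(t),\phi(u'(t))\rangle=A(u'(t))\,\langle u(t),u'(t)\rangle,
\end{equation*}
which is $\mathcal{C}^{1}$ since $\phi(u')$ is $\mathcal{C}^{1}$, and compute
\begin{equation*}
G'(t)=A(u'(t))\|u'(t)\|^{2}-\lambda\,\langle u(t),\tilde{f}(t,u(t))\rangle+(1-\lambda)\|u(t)\|^{2}.
\end{equation*}
Suppose $\|u\|_{\infty}>R$. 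After a translation in $t$, either there is a subinterval $(a,b)\subseteq\mathopen{[}0,T\mathclose{]}$ with $\|u(a)\|=\|u(b)\|=R$ and $\|u(t)\|>R$ on $(a,b)$, or $\|u(t)\|>R$ for every $t$. In the first case the endpoint analysis of $V(t):=\|u(t)\|^{2}$ gives $V'(a)\geq0$ and $V'(b)\leq0$, hence $G(a)\geq0$ and $G(b)\leq0$, so $\int_{a}^{b}G'\,dt\leq0$. In the second case $\int_{0}^{T}G'\,dt=0$ by periodicity. In both cases, however, every summand of $G'$ is non-negative on the relevant set (the second by the extended sign condition on $\tilde{f}$) and $(1-\lambda)\|u\|^{2}>0$ strictly; the integral of $G'$ is therefore strictly positive, a contradiction.

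A bound on $\|u'\|_{\infty}$ follows from Lemma~\ref{lem-apriori}, whose hypothesis is verified because $\langle\phi(\xi),\xi\rangle=A(\xi)\|\xi\|^{2}=\|\phi(\xi)\|\,\|\xi\|\to+\infty$ as $\|\xi\|\to+\infty$ (since $\phi$ is a homeomorphism of $\mathbb{R}^{m}$). Choosing $M>0$ accordingly and setting $\mathcal{U}:=\{u\in\mathcal{C}^{1}_{T}\colon\|u\|_{\infty}<R+1,\ \|u'\|_{\infty}<M+1\}$, the previous estimates guarantee that no $T$-periodic solution of the homotopic equation lies on $\partial\mathcal{U}$ for $\lambda\in\mathopen{[}0,1\mathclose{[}$. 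Since $\mathcal{U}\cap\mathbb{R}^{m}=B(0,R+1)$ and the degree requirement is met, Theorem~\ref{th-mama2} yields a $T$-periodic solution $u$ of $(\phi(u'))'+\tilde{f}(t,u)=0$ in $\overline{\mathcal{U}}$. A last application of the $G$-argument at $\lambda=1$ (where the strict summand disappears) either gives $\|u\|_{\infty}\leq R$, so that $\tilde{f}\equiv f$ along $u$ and $u$ solves \eqref{eq-Ma}, or forces $u$ to be a constant $c$ with $\|c\|>R$ satisfying $f(t,Rc/\|c\|)\equiv0$; in the latter case $u_{0}\equiv Rc/\|c\|$ is itself a $T$-periodic solution of \eqref{eq-Ma} with $\|u_{0}\|=R$.

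The main obstacle is the non-strict inequality in Hartman's condition \eqref{Hcond}: the naive homotopy $(\phi(u'))'+\lambda f(t,u)=0$ does not by itself confine solutions to $\overline{B(0,R)}$, since \eqref{Hcond} only constrains $f$ on the sphere $\|\xi\|=R$. The two devices that bypass this difficulty are the radial truncation, which propagates the sign condition to the half-space $\|\xi\|\geq R$, and the additive homotopy term $-(1-\lambda)u$, which supplies the strict positivity needed to close the contradiction for $\lambda<1$; the borderline case $\lambda=1$ is then handled directly as above.
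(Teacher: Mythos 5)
Your proof is correct, and it reaches the conclusion by a genuinely different route in the key a~priori estimate. The paper uses the same homotopy $\lambda f(t,\xi)-(1-\lambda)\xi$, but keeps $f$ untruncated and takes $\mathcal{U}$ of radius exactly $R$ in the $u$-component; boundary solutions for $\lambda<1$ are then excluded by a \emph{pointwise} maximum-principle argument: at a maximum point $t^{*}$ of $\|u(t)\|$ one has $\langle u'(t^{*}),u(t^{*})\rangle=0$, hence, by the radial form of $\phi$, the function $v(t):=\langle\phi(u'(t)),u(t)\rangle$ vanishes at $t^{*}$ while $v'(t^{*})>0$ by the strict sign condition, contradicting maximality. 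Because $\mathcal{U}$ has radius $R$, only the values of $f$ on the sphere $\|\xi\|=R$ ever enter, no truncation is needed, and the solution delivered by Theorem~\ref{th-mama2} automatically satisfies $\|u\|_{\infty}\leq R$, so no borderline analysis at $\lambda=1$ is required. Your variant --- enlarging $\mathcal{U}$ to radius $R+1$, radially truncating $f$ so that \eqref{Hcond} propagates to $\|\xi\|\geq R$, and integrating $G'$ over an excursion interval --- is sound: the endpoint signs $G(a)\geq 0$, $G(b)\leq 0$ are right, the strict term $(1-\lambda)\|u\|^{2}$ closes the contradiction for $\lambda<1$, and the $\lambda=1$ dichotomy (including the degenerate constant-solution branch, which correctly yields a solution of the original equation on the sphere) is handled properly; but all of this extra machinery is forced only by the choice of the larger set. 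What your integral argument buys is robustness --- it needs no interior maximum point and is insensitive to where the excursion occurs; what the paper's pointwise argument buys is brevity and the avoidance of both the truncation and the $\lambda=1$ case analysis. Both proofs rest identically on Lemma~\ref{lem-apriori} for the gradient bound and on Theorem~\ref{th-mama2} with $k_{0}(\cdot,0)=-\mathrm{Id}$ for the degree condition.
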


\begin{proof}
We shall propose two different proofs. The first one is based on Theorem~\ref{th-mama2}
and is partially inspired by the approach introduced by Mawhin in \cite{Ma-00}. The second one will be
only sketched and is based on Theorem~\ref{th-mama1}, following the approach in \cite{MaUr-02}.

We introduce a function $\tilde{f}(t,\xi,\lambda)$ such that
\begin{equation*}
\tilde{f}(t,\xi,1) = f(t,\xi), \quad \tilde{f}(t,\xi,0) = - \xi, \qquad t\in \mathopen{[}0,T\mathclose{]}, \; \xi\in\mathbb{R}^{m},
\end{equation*}
and
\begin{equation}\label{eq-5.3}
\langle\tilde{f}(t,\xi,\lambda),\xi\rangle < 0,
\quad \forall \, t\in \mathopen{[}0,T\mathclose{]},
\; \forall \, \xi\in\mathbb{R}^{m} \text{ with } \|\xi\|_{\mathbb{R}^{m}}= R, \;\forall\, \lambda\in \mathopen{[}0,1\mathclose{[}.
\end{equation}
In view of Hartman's condition \eqref{Hcond}, a suitable choice of $\tilde{f}$ could be
\begin{equation*}
\tilde{f}(t,\xi,\lambda) = \lambda f(t,\xi) - (1-\lambda) \xi.
\end{equation*}

Since $\phi \colon \mathbb{R}^{m} \to \mathbb{R}^{m}$ is a homeomorphism, it follows that $|\phi(\xi)|\to +\infty$
as $|\xi|\to +\infty$. Then, by the structure of $\phi$ we have chosen, we have
that $A(\xi)|\xi| \to +\infty$ as $|\xi|\to +\infty$. Hence, $\langle \phi(\xi),\xi\rangle =
A(\xi)|\xi|^{2} \to +\infty$ as $|\xi|\to +\infty$. Therefore, the hypothesis of Lemma~\ref{lem-apriori}
is satisfied and thus there exists a constant $M_{R} > 0$ such that
$\|u'\|_{\infty} < M_{R}$ for any $T$-periodic solution $u(t)$ of \eqref{eq-lambda}
(for some $\lambda \in \mathopen{[}0,1\mathclose{]}$) such that $\|u\|_{\infty} \leq R$.

We are going to apply Theorem~\ref{th-mama2} to the set
\begin{equation}\label{set-uhk}
\mathcal{U} := \bigl{\{}u\in\mathcal{C}^{1}_{T} \colon \|u\|_{\infty} < R, \, \|u'\|_{\infty} < M_{R} \bigr{\}}.
\end{equation}

First, we prove that the $T$-periodic problem associated with \eqref{eq-lambda}, for $\lambda\in \mathopen{[}0,1\mathclose{[}$,
has no solution on $\partial\mathcal{U}$.
As already observed, from Lemma~\ref{lem-apriori} we deduce that every solution $u\in\overline{\mathcal{U}}$ satisfies $\|u'\|_{\infty} < M_{R}$.
Therefore, to prove our claim we have only to verify that if $u\in\overline{\mathcal{U}}$ is a $T$-periodic solution of \eqref{eq-lambda}
for some $\lambda \in \mathopen{[}0,1\mathclose{[}$, then $\|u\|_{\infty} < R$.

By contradiction, assume that there exists a $T$-periodic solution $u(t)$ of \eqref{eq-lambda} (for  some $\lambda \in \mathopen{[}0,1\mathclose{[}$)
such that $\|u\|_{\infty} = R$.
Then, there exists $t^{*}\in\mathopen{[}0,T\mathclose{]}$ such that $\|u(t^{*})\|_{\mathbb{R}^{m}} = R$,
with $t^{*}$ a point of maximum for $\|u(t)\|_{\mathbb{R}^{m}}$. By the $T$-periodicity of the map $t\mapsto \|u(t)\|^{2}_{\mathbb{R}^{m}}$,
we have that $t^{*}$ is a critical point and therefore (by differentiating) $\langle u'(t^{*}),u(t^{*})\rangle = 0$.
Next, we observe that, if $u'(t^*) = 0$, then $\phi(u'(t^{*})) = 0$, while, if $u'(t^{*})\neq 0$, then
(by the particular form of $\phi$)
\begin{equation*}
\langle\phi(u'(t^{*})),u(t^{*})\rangle = A(u'(t^{*})) \langle u'(t^{*}),u(t^{*})\rangle=0.
\end{equation*}
From the equality
\begin{equation*}
\dfrac{d}{dt} \langle\phi(u'(t)),u(t)\rangle =
-\langle\tilde{f}(t,u(t),\lambda),u(t)\rangle + \langle\phi(u'(t)),u'(t)\rangle
\end{equation*}
and condition \eqref{eq-5.3}, we obtain that
\begin{equation*}
\dfrac{d}{dt} \langle\phi(u'(t)),u(t)\rangle |_{t=t^{*}} > 0.
\end{equation*}
We thus have proved that the function $v(t):= \langle\phi(u'(t)),u(t)\rangle$ is such that
$v(t^{*}) = 0$ and $v'(t^{*}) > 0$.
We deduce the existence of $\varepsilon>0$ such that
\begin{align*}
&v(t) < 0, \quad \text{for all } t\in\mathopen{]}t^{*}-\varepsilon,t^{*}\mathclose{[},
\\ & v(t) >0, \quad \text{for all } t\in\mathopen{]}t^{*},t^{*}+\varepsilon\mathclose{[}.
\end{align*}
Both the above inequalities are meaningful also
if $t^{*} = 0$ or if $t^{*} = T$, because, in this case, $|u(0)| = |u(T)| = \|u\|_{\infty} = R$
and also $v(0) = v(T)$. More precisely, if such a situation occurs, we read the first inequality
for $t^{*} = T$ and the second one for $t^{*} = 0$.
The special form of $\phi$ implies that
\begin{align*}
&\langle u'(t),u(t)\rangle < 0, \quad \text{for all } t\in\mathopen{]}t^{*}-\varepsilon,t^{*}\mathclose{[},
\\ & \langle u'(t),u(t)\rangle >0, \quad \text{for all } t\in\mathopen{]}t^{*},t^{*}+\varepsilon\mathclose{[}.
\end{align*}
Then, since
\begin{equation*}
\dfrac{d}{dt}\|u(t)\|_{\mathbb{R}^{m}}^{2} = 2 \, \langle u'(t),u(t) \rangle,
\end{equation*}
we obtain that $t=t^{*}$ cannot be a maximum point for the function $\|\cdot\|_{\mathbb{R}^{m}}$, a contradiction.

The second hypothesis of Theorem~\ref{th-mama2} follows directly from the fact that
\begin{equation*}
\tilde{f}(t,\xi,0) = -\xi
\end{equation*}
and, clearly, the degree $\text{\rm deg}_{B}(-Id_{\mathbb{R}^{m}},\mathcal{U}\cap \mathbb{R}^{m},0)$ is nonzero.

The theorem is thus proved as an application of Theorem~\ref{th-mama2}.

\smallskip

The same theorem can be proved also using Theorem~\ref{th-mama1}. We give just a sketch of the proof.
As in \cite{MaUr-02} we suppose that Hartman's condition
\eqref{Hcond} holds with a strict inequality, namely
\begin{equation}\label{cond-hmu}
\langle f(t,\xi),\xi\rangle < 0, \quad \forall \, t\in \mathopen{[}0,T\mathclose{]},
\; \forall \, \xi\in\mathbb{R}^{m} \text{ with } \|\xi\|_{\mathbb{R}^{m}}= R.
\end{equation}
Then, using the same argument as above, we prove that system \eqref{eq-lambda} for
\begin{equation*}
\tilde{f}(t,\xi,\lambda):= \lambda f(t,\xi)
\end{equation*}
satisfies the following condition: for each $\lambda \in \mathopen{]}0,1\mathclose{[}$ there are no ($T$-periodic) solutions on the boundary of $\mathcal{U}$
(where $\mathcal{U}$ is defined as in \eqref{set-uhk}).
Moreover, consistently with the notation in Theorem~\ref{th-mama1}, we have
\begin{equation*}
k^{*}(\omega):= \frac{1}{T}\int_{0}^{T} f(t,\omega)~\!dt, \quad \omega\in\mathbb{R}^{m},
\end{equation*}
and, by \eqref{cond-hmu}, we obtain $\langle k^{*}(\omega),\omega\rangle < 0$ for all $\omega\in \partial B(0,R) = \partial\mathcal{U}
\cap \mathbb{R}^{m}$. Hence,
\begin{equation*}
\text{\rm deg}_{B}(k^{*},\mathcal{U}\cap \mathbb{R}^{m},0)= \text{\rm deg}_{B}(-Id_{\mathbb{R}^{m}},\mathcal{U}\cap \mathbb{R}^{m},0)
= (-1)^{m}\not= 0.
\end{equation*}
At this point Theorem~\ref{th-mama1} implies the existence of at least a $T$-periodic solution of \eqref{eq-Ma} with $\|u\|_{\infty} \leq R$
(and also $\|u'\|_{\infty} \leq M_{R}$).

Since the result is obtained under the strict inequality \eqref{cond-hmu} in Hartman's condition, it remains to prove the
theorem within the original inequality \eqref{Hcond}. To achieve this latter step, we approximate the vector field with
functions of the form $f(t,\xi) - \varepsilon \xi$ (with $\varepsilon \to 0^{+}$) and use the a priori bounds for the solutions.
We skip this part since it has been already fully developed in \cite{MaUr-02}.
\end{proof}

\begin{remark}\label{rem-5.1}
Obviously any vector $p$-Laplacian differential operator defined through a homeomorphism $\psi_{p}$,
defined as in \eqref{eq-psip} for $p > 1$, satisfies the assumption of $\phi$ in Theorem~\ref{th-5.1}.
On the other hand, it is possible to provide simple examples of homeomorphisms satisfying \eqref{eq-Aphi}
which do not belong to the class of the $\psi_{p}$-functions considered in \eqref{eq-psip}.
For instance, the map
\begin{equation}\label{arctan}
\phi(\xi) := (\arctan |\xi|)\xi, \quad \xi\in\mathbb{R}^{m},
\end{equation}
fits well for Theorem~\ref{th-5.1} and is not in the $p$-Laplacian class.

The homeomorphism defined in \eqref{arctan}
is a special case of a class of maps of the form
\begin{equation}\label{eq-gamma}
\phi(\xi) : = \gamma(|\xi|)\xi, \; \text{ if } \, \xi\in \mathbb{R}^{m}\setminus\{0\}, \qquad \phi(0) = 0,
\end{equation}
with $\gamma(s)$ a positive continuous function defined for $s > 0$. Such class of operators is clearly included in that
of the form \eqref{eq-Aphi} and it has been considered in \cite{LQC-11} for the singular case, namely for $\phi$ defined on an open ball
$B(0,a)$ and, consequently, for $\gamma(s)$ with
$0 < s < a < +\infty$.

\smallskip

A natural question which raises in this context is whether the homeomorphisms $\phi$
of the form \eqref{eq-Aphi} (and thus, in particular, \eqref{eq-gamma}) belong to the class of nonlinear operators introduced by Man\'{a}sevich and Mawhin in \cite{MaMa-98}
and satisfying conditions $(H1)$ and $(H2)$ recalled in Remark~\ref{rem-3.2}. With this respect, we observe that $(H2)$ is always satisfied,
since $\langle \phi(x),x\rangle = |\phi(x)|\,|x| \geq \alpha(|x|) |x|$ for the map $\alpha(s)$ defined as
\begin{equation*}
\alpha(s):= \min\bigl{\{}|\phi(x)| \colon |x| = s \bigr{\}}, \quad s \geq 0,
\end{equation*}
and such that $\alpha(s)\to +\infty$ as $s\to +\infty$.

Concerning condition $(H1)$ we claim that
the homeomorphisms defined by \eqref{eq-gamma} satisfy this condition.
Indeed, for the proof of $(H1)$ we proceed as follows.
First of all, we notice that a homeomorphism of the form \eqref{eq-gamma} transforms any radial line
$\{\vartheta\vec{v}\colon \vartheta \geq 0\}$ homeomorphically onto itself
(where $\vec{v}\in S^{m-1}$ is an arbitrary unit vector). Hence (by the positivity of $\gamma$) we immediately deduce that
the map $\zeta\colon{\mathbb R}^{+} \to {\mathbb R}^{+}$ defined by $\zeta(s):= \gamma(s)s$ for $s > 0$ and $\zeta(0) = 0$
is an increasing homeomorphism of ${\mathbb R}^{+}$ onto itself. Then, to conclude, we just repeat (with minor modifications)
the proof already given
by Man\'{a}sevich and Mawhin in \cite[Example~2.2]{MaMa-98} for the vector $p$-Laplacian, that is
\begin{align*}
& \langle\phi(x_{1})-\phi(x_{2}),x_{1}-x_{2}\rangle = \\
&= \gamma(|x_{1}|)|x_{1}|^{2} + \gamma(|x_{2}|)|x_{2}|^{2} -  \gamma(|x_{1}|)\langle x_{1},x_{2}\rangle - \gamma(|x_{2}|)\langle x_{1},x_{2}\rangle\\
&\geq \gamma(|x_{1}|)|x_{1}|^{2} + \gamma(|x_{2}|)|x_{2}|^{2} -  \gamma(|x_{1}|)|x_{1}| |x_{2}|- \gamma(|x_{2}|)|x_{1}| |x_{2}|\\
&= \bigl(\zeta( |x_{1}| ) - \zeta( |x_{2}| ) \bigr)\bigl(|x_{1}|-|x_{2}|\bigr) \geq 0.
\end{align*}
Therefore, we have that $\langle\phi(x_{1})-\phi(x_{2}),x_{1}-x_{2}\rangle = 0$ implies $|x_{1}| = |x_{2}|$. Then, either
$x_{1} = x_{2} = 0$ or $|x_{1}| \neq 0$ and
$\langle\phi(x_{1})-\phi(x_{2}),x_{1}-x_{2}\rangle = \gamma(|x_{1}|) |x_{1} - x_{2}|^{2}$, so that we conclude
again that $x_{1} = x_{2}$. Hence $(H1)$ is proved.

\smallskip

On the other hand, we claim that there are homomorphisms $\phi$
of the form \eqref{eq-Aphi} which do not satisfy condition $(H1)$.
Before presenting our example, we first introduce a class of homeomorphisms satisfying \eqref{eq-Aphi}
which strictly includes the $p$-Laplacian class.

Let  $\mathcal{A}\colon S^{m-1} \to \mathbb{R}^{+}_{0}$ be a continuous map, where $S^{m-1} = \partial B(0,1)$ is the
$(m-1)$-dimensional sphere in the Euclidean space $\mathbb{R}^{m}$. Let also $p > 1$ be a fixed real number.
We define
\begin{equation}\label{eq-AAphi}
\phi(\xi) := |\xi|^{p-2} \mathcal{A}\biggl{(}\frac{\xi}{|\xi|}\biggr{)}\xi, \; \text{ if } \, \xi\in \mathbb{R}^{m}\setminus\{0\}, \qquad \phi(0) = 0,
\end{equation}
which is of the form \eqref{eq-Aphi} for $A(\xi) := |\xi|^{p-2} \mathcal{A}(\xi/|\xi|)$.
It is straightforward to check that maps of the form \eqref{eq-AAphi} are one-to-one, surjective and continuous,
therefore, by Brouwer invariance of domain theorem, are homeomorphisms of $\mathbb{R}^{m}$.

We give now an example of a planar homeomorphism of the form \eqref{eq-AAphi} which does not satisfy condition $(H1)$.
It is not difficult to extend the example to any dimension $m\geq 2$.
For simplicity, we restrict to the case $p=2$.
First of all, we observe that, given two nontrivial vectors $x_{1} \neq x_{2}$ and for $v_{i}:= x_{i}/|x_{i}|$ ($i=1,2$), we have
\begin{align*}
& \langle\phi(x_{1})-\phi(x_{2}),x_{1}-x_{2}\rangle = \\
&= \langle \mathcal{A}(v_{1})x_{1} - \mathcal{A}(v_{2})x_{2},x_{1}-x_{2}\rangle \\
&= \mathcal{A}(v_{1})|x_{1}|^{2} + \mathcal{A}(v_{2})|x_{2}|^{2} - (\mathcal{A}(v_{1})+\mathcal{A}(v_{2}))\langle x_{1},x_{2}\rangle \\
&= \mathcal{A}(v_{1})|x_{1}|^{2} + \mathcal{A}(v_{2})|x_{2}|^{2} - (\mathcal{A}(v_{1})+\mathcal{A}(v_{2}))|x_{1}||x_{2}|\cos\beta.
\end{align*}
The inner product is clearly positive when $\langle x_{1},x_{2}\rangle = 0$. We show now how to find vectors where it can get
negative values. We take as $\mathcal{A}\colon S^{1} \to \mathbb{R}^{+}_{0}$ any continuous map such that $\mathcal{A}(1) = 1$
and $\mathcal{A}(e^{i\pi/4}) = 6$. Then, for the vectors
\begin{equation*}
x_{1}=(1,0), \quad x_{2}=\rho \, (\sqrt{2}/2,\sqrt{2}/2), \; \text{ with } \rho=\dfrac{7\sqrt{2}}{24},
\end{equation*}
the above formula (with $\beta = \pi/4$) yields
\begin{equation*}
\langle\phi(x_{1})-\phi(x_{2}),x_{1}-x_{2}\rangle = -\dfrac{1}{48} < 0.
\end{equation*}
This shows the effectiveness of Theorem~\ref{th-mama1} and Theorem~\ref{th-mama2} which allow to extend Hartman-Knobloch theorem
to a broader class of operators not included in \cite{MaMa-98, Ma-01}.
$\hfill\lhd$
\end{remark}

\section{Concluding remarks}\label{section-6}

In this paper all the applications of the abstract results to differential systems have been considered in the
context of vector fields which are globally defined on the Euclidean space $\mathbb{R}^{m}$
(cf.~system $(\mathscr{C})$) or have an inverse that is globally defined (cf.~\eqref{syst-cycl}). However, there are some
significant cases of maps which have as their natural domain/image an open (and possibly bounded)
subset of $\mathbb{R}^{m}$. In the one-dimensional case, typical examples in this direction arise from the study of
the mean curvature operator
\begin{equation*}
u \mapsto \dfrac{u'}{\sqrt{1 + (u')^{2}}}
\end{equation*}
or of the Minkowski operator
\begin{equation*}
u \mapsto \dfrac{u'}{\sqrt{1 - (u')^{2}}},
\end{equation*}
which may be described by means of homeomorphisms $\phi \colon I\to J$, with $I= \mathbb{R}$ and $J = \mathopen{]}-1,1\mathclose{[}$, or
$I =\mathopen{]}-1,1\mathclose{[}$ and $J= \mathbb{R}$, respectively
(see, for instance, \cite{BeJeMa-10, ObOm-11} and the references therein).
Higher dimensional versions of these operators are usually studied as well.

As already observed in Remark~\ref{rem-3.1} and underlined many times along the paper,
our abstract setting has been devised in order to deal with these general cases, too.
Indeed, all our results in the previous sections could be reformulated (by suitably adapting the
hypotheses) in terms of operators which are not defined on the whole space. Instead of
discussing again with the needed details all the theorems and lemmas presented above,
we just illustrate how to deal with the autonomous system
\begin{equation}\label{eq-g0}
\bigl{(}\phi(u')\bigr{)}' + k(t,u,u') = 0,
\end{equation}
when $\phi$ (or $\phi^{-1}$) and $k$ are not defined on the whole space.

Let $\mathcal{A},\mathcal{B},\mathcal{G}\subseteq\mathbb{R}^{m}$ be nonempty open connected sets
with $\mathcal{A},\mathcal{B}$ containing the zero element $0_{\mathbb{R}^{m}}$.
Let $k\colon \mathopen{[}0,T\mathclose{]}\times \mathcal{G} \times \mathcal{A} \to \mathbb{R}^{m}$ be an $L^{1}$-Carath\'{e}odory
function and let
$\phi \colon \mathcal{A} \to \phi(\mathcal{A})= \mathcal{B}$ be a homeomorphism with $\phi(0)=0$.

As before, we deal with the $T$-periodic problem associated with equation \eqref{eq-g0}.
We recall that a \textit{$T$-periodic solution of \eqref{eq-g0}} is a continuously differentiable function $u\colon \mathopen{[}0,T\mathclose{]}\to \mathbb{R}^{m}$ such that
$u(0) = u(T)$ and satisfying
\begin{itemize}
\item $u(t)\in\mathcal{G}$, for all $t\in \mathopen{[}0,T\mathclose{]}$;
\item $u'(t)\in\mathcal{A}$, for all $t\in \mathopen{[}0,T\mathclose{]}$;
\item $t\mapsto\phi(u'(t))$ is absolutely continuous;
\item $u(t)$ satisfies \eqref{eq-g0}, for a.e.~$t\in \mathopen{[}0,T\mathclose{]}$.
\end{itemize}

Writing equation \eqref{eq-g0} as an equivalent cyclic feedback system in $\mathbb{R}^{2m}$
in the class $(\mathscr{C})$
\begin{equation}\label{syst-g0}
\begin{cases}
\, u' = \phi^{-1}(y) \\
\, y' = -k(t,u,\phi^{-1}(y)),
\end{cases}
\end{equation}
we can enter the setting presented in Section~\ref{section-2} with the choice of $X$, $Z$ and $L$
described at the beginning of Section~\ref{section-3} (with $n=2$) and with $M$ the Nemytskii
operator associated with the right-hand side of system \eqref{syst-g0}. In this case, $M$
has as a  domain the set of continuous pairs of functions $(x_{1},x_{2})\in X$ such that
$x_{1}(t)\in \mathcal{G}$ and $x_{2}(t)\in \mathcal{B}$ for all $t\in \mathopen{[}0,T\mathclose{]}$.

We notice that our semi-abstract results such as Theorem~\ref{th-cycl2} or Theorem~\ref{th-4.1} can be restated without changes
(in view of the hypothesis $\overline{\Omega} \subseteq \text{\rm dom}\,M$, or  $\overline{\Omega} \subseteq \text{\rm dom}\,\tilde{M}$,
respectively). As a consequence, we can provide versions of Theorem~\ref{th-mama1} and Theorem~\ref{th-mama2} in this more
general context. For these versions we must be careful in the choice of the open and bounded set $\mathcal{U}\subseteq \mathcal{C}^{1}_{T}$
considered in these theorems. Indeed, we need to check that
\begin{equation*}
\overline{\mathcal{U}} \subseteq \bigl{\{}u\in\mathcal{C}^{1}_{T}
\colon u(t)\in\mathcal{G}, \; u'(t)\in\mathcal{A}, \; \forall \, t\in \mathbb{R} \bigr{\}}.
\end{equation*}
In this framework we could also extend to higher dimensions the continuation theorem of
Benevieri, do {\'O} and de Medeiros obtained in \cite{BOM-07} in the one-dimensional case.

\appendix
\section{Coincidence degree results for autonomous equations}\label{appendix-A}

In this appendix we deal with the $T$-periodic boundary value problem associated with the \textit{autonomous equation}
\begin{equation}\label{eq-A.1}
x' = f_{0}(x),
\end{equation}
where $f_{0}\colon \mathcal{A}_{0}\to\mathbb{R}^{d}$ is a continuous function defined on the open (and not necessarily bounded)
set $\mathcal{A}_{0}\subseteq\mathbb{R}^{d}$.

Let $X := \mathcal{C}(\mathopen{[}0,T\mathclose{]},\mathbb{R}^{d})$ be the Banach space of the continuous functions from $\mathopen{[}0,T\mathclose{]}$ to $\mathbb{R}^{d}$,
endowed with the $\sup$-norm, and let $Z:=L^{1}(\mathopen{[}0,T\mathclose{]},\mathbb{R}^{d})$ be the space of integrable functions from $\mathopen{[}0,T\mathclose{]}$ to $\mathbb{R}^{d}$,
endowed with the $L^{1}$-norm. Let $\text{\rm dom}\,L\subseteq X$ be the set of absolutely continuous functions satisfying the periodic boundary condition $x(0)=x(T)$.
We define the linear differential operator $L\colon \text{\rm dom}\,L \to Z$ as
\begin{equation*}
(L x)(t):= x'(t), \quad t\in\mathopen{[}0,T\mathclose{]},
\end{equation*}
which is a Fredholm mapping of index zero.

Let
\begin{equation*}
\text{\rm dom}\,M_{0} := \bigl{\{} x\in X \colon x(t)\in\mathcal{A}_{0}, \, \forall \, t\in \mathopen{[}0,T\mathclose{]}\bigr{\}}.
\end{equation*}
We define $M_{0}\colon \text{\rm dom}\,M_{0}\to Z$ as the Nemytskii operator induced by the function $f_{0}(s)$, namely
\begin{equation*}
(M_{0} u)(t)=f_{0}(u(t)), \quad t\in\mathopen{[}0,T\mathclose{]}.
\end{equation*}

With this position, the $T$-periodic boundary value problem associated with \eqref{eq-A.1} can be written as an equivalent \textit{coincidence equation}
\begin{equation}\label{coinc-auto}
L x = M_{0} x, \quad x\in \text{\rm dom}\,L\cap\text{\rm dom}\,M_{0}.
\end{equation}

\medskip

In this context the following theorem holds.

\begin{theorem}\label{th-CMZ}
Let $\mathcal{A}_{0}=\mathbb{R}^{d}$.
Let $\Omega \subseteq X$ be an open bounded set.
Assume that there is no $u\in\partial\Omega$ such that $u'=f_{0}(u)$.
Then
\begin{equation*}
D_{L}(L-M_{0},\Omega) = (-1)^{d} \, \text{\rm deg}_{B}(f_{0},\Omega\cap\mathbb{R}^{d},0).
\end{equation*}
\end{theorem}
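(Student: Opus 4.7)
The plan is to reduce the computation of $D_{L}(L-M_{0},\Omega)$ to a finite-dimensional Brouwer degree on $\ker L$ via the \emph{Reduction Formula} (Lemma~\ref{red-for}), exploiting the fact that for the autonomous equation $u'=f_{0}(u)$, the projection $QM_{0}$ restricted to constants $c\in\mathbb{R}^{d}$ is simply the evaluation $f_{0}(c)$. The statement is essentially the specialization of \cite{CaMaZa-92} to the trivial homotopy $\mathscr{F}(t,x,\lambda)\equiv f_{0}(x)$.

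First, I would verify that $D_{L}(L-M_{0},\Omega)$ is well-defined. The set $\mathcal{S}:=\{u\in\overline{\Omega}\cap\text{\rm dom}\,L \colon Lu=M_{0}u\}$ of $T$-periodic solutions of $u'=f_{0}(u)$ in $\overline{\Omega}$ is a compact subset of $\Omega$: boundedness of $\Omega$ yields a uniform bound on $\|u\|_{\infty}$, continuity of $f_{0}$ on the compact set $\overline{\Omega}\cap\mathbb{R}^{d}$ (well-defined since $\mathcal{A}_{0}=\mathbb{R}^{d}$) yields a uniform bound on $\|u'\|_{\infty}=\|f_{0}\circ u\|_{\infty}$, and Arzel\`a-Ascoli gives compactness; the hypothesis that no $u\in\partial\Omega$ solves $u'=f_{0}(u)$ then gives $\mathcal{S}\subseteq\Omega$.

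Next, I would introduce the homotopy $\Phi_{\vartheta}(u):=Pu+JQM_{0}u+\vartheta K(Id_{Z}-Q)M_{0}u$ for $\vartheta\in\mathopen{[}0,1\mathclose{]}$: at $\vartheta=1$ we recover the Leray-Schauder operator whose degree equals $D_{L}(L-M_{0},\Omega)$, while at $\vartheta=0$ the operator $\Phi_{0}=P+JQM_{0}$ has image in the finite-dimensional subspace $\ker L\cong\mathbb{R}^{d}$. A direct check (apply $Q$ and $L$ to the fixed-point equation) shows that a fixed point of $\Phi_{\vartheta}$ corresponds to a $T$-periodic function $u$ with $QM_{0}u=0$ (equivalently, $\int_{0}^{T}f_{0}(u(t))\,dt=0$) satisfying $u'=\vartheta f_{0}(u)$.

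The hard part will be showing the admissibility of this homotopy, i.e.~that no fixed point of $\Phi_{\vartheta}$ occurs on $\partial\Omega$ for any $\vartheta\in\mathopen{[}0,1\mathclose{]}$. For $\vartheta=1$ this is the hypothesis; for $\vartheta=0$, fixed points are constants $c\in\partial\Omega\cap\mathbb{R}^{d}$ with $f_{0}(c)=0$, which solve $u'=f_{0}(u)$ and are excluded. The delicate case is $\vartheta\in\mathopen{]}0,1\mathclose{[}$: solutions correspond, via the rescaling $v(s):=u(s/\vartheta)$, to $T\vartheta$-periodic solutions of the autonomous equation $v'=f_{0}(v)$, which are not immediately excluded by the hypothesis. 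To handle this I would follow the strategy in \cite{CaMaZa-92}: use the compactness of $\mathcal{S}$ to restrict, via the excision property, to a tubular neighborhood of $\mathcal{S}$, and then show by a uniform a~priori bound (again via Arzel\`a-Ascoli applied to the full parametrized family, noting that $\|\vartheta f_{0}\circ u\|_{\infty}\leq\|f_{0}|_{\overline{\Omega}\cap\mathbb{R}^{d}}\|_{\infty}$) together with a continuity/compactness argument that the joint set $\bigcup_{\vartheta}\{\text{fixed points of }\Phi_{\vartheta}\}$ stays in a compact subset of $\Omega$.

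Once admissibility is established, homotopy invariance of the Leray-Schauder degree yields $D_{L}(L-M_{0},\Omega)=\text{\rm deg}_{LS}(Id_{X}-\Phi_{0},\Omega,0)$. Since $\Phi_{0}(\Omega)\subseteq\ker L$, the Reduction Formula gives $\text{\rm deg}_{LS}(Id_{X}-\Phi_{0},\Omega,0)=\text{\rm deg}_{B}(Id_{\ker L}-\Phi_{0}|_{\ker L},\Omega\cap\ker L,0)$. A direct computation shows that, for $c\in\ker L\cong\mathbb{R}^{d}$, one has $(Id-Q)M_{0}c=0$ and hence $\Phi_{0}(c)=c+JQM_{0}c=c+f_{0}(c)$, so $(Id_{\ker L}-\Phi_{0})(c)=-f_{0}(c)$. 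The standard sign identity $\text{\rm deg}_{B}(-g,U,0)=(-1)^{d}\text{\rm deg}_{B}(g,U,0)$ for maps $g\colon U\subseteq\mathbb{R}^{d}\to\mathbb{R}^{d}$ then closes the argument.
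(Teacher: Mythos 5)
Your final degree computation ($\Phi_{0}(c)=c+f_{0}(c)$ on constants, the Reduction Formula, the sign $(-1)^{d}$) is fine, but the proof has a genuine gap exactly at the point you yourself flag as ``the delicate case'': for $\vartheta\in\mathopen{]}0,1\mathclose{[}$ the homotopy $\Phi_{\vartheta}$ is in general \emph{not} admissible, and no combination of compactness, excision and a~priori bounds can make it so. The obstruction is not a failure of compactness of the joint fixed-point set (it is indeed compact in $\overline{\Omega}$ by Arzel\`{a}--Ascoli, as you note); it is that fixed points of $\Phi_{\vartheta}$ can lie on $\partial\Omega$ for intermediate $\vartheta$ even when the hypotheses of the theorem hold. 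Concretely, take $d=2$, $f_{0}(x_{1},x_{2})=(-\omega x_{2},\omega x_{1})$ with $2\pi<\omega T<4\pi$, and $\Omega=B(0,r)\subseteq X$. The only $T$-periodic solution of $u'=f_{0}(u)$ is $u\equiv 0$, so the hypothesis is satisfied; yet for $\vartheta=2\pi/(\omega T)\in\mathopen{]}0,1\mathclose{[}$ every function $u(t)=\rho\,(\cos(\vartheta\omega t),\sin(\vartheta\omega t))$ is a $T$-periodic solution of $u'=\vartheta f_{0}(u)$, and the one with $\rho=r$ lies on $\partial\Omega$. Excising down to a tubular neighborhood of $\mathcal{S}=\{0\}$ does not help: for every $\varepsilon>0$ the circle of radius $\varepsilon$ sits on $\partial B(0,\varepsilon)$. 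The structural reason is the one you already identified with your rescaling $v(s)=u(s/\vartheta)$: boundary fixed points at level $\vartheta$ correspond to $\vartheta T$-periodic orbits of the autonomous flow, and these are simply not controlled by a hypothesis on $T$-periodic solutions.

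This is precisely why Theorem~\ref{th-CMZ} is nontrivial and why the paper does not prove it but only cites it: the proof in \cite{CaMaZa-92} replaces $f_{0}$ by a nearby Kupka--Smale vector field, for which the set of periodic orbits of period at most $T$ is structured well enough to carry out the degree computation, and the proof in \cite{BaMa-91} exploits the $S^{1}$-equivariance of the autonomous problem under time translations via an $S^{1}$-equivariant degree, which forces the contribution of the non-constant orbits to vanish. One of these (or some comparable extra ingredient) is essential; the direct homotopy $Lu=\vartheta M_{0}u$ that your outline relies on is exactly the step that fails.
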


The above result was proved by Capietto, Mawhin and Zanolin in \cite{CaMaZa-92} using Mawhin's coincidence degree and an approximation argument
based on Kupka-Smale theorem. A generalization of Theorem~\ref{th-CMZ} to the neutral functional differential equation
$\tfrac{d}{dt} (Dx_{t}) = g(x_{t})$ was obtained by Bartsch and Mawhin in \cite{BaMa-91} using the topological degree for
$S^{1}$-equivariant maps. The main result in the article of Bartsch and Mawhin (cf.~\cite[Theorem~1]{BaMa-91})
concerns a vector field defined on the whole space, however its proof is based on a ``local'' result
\cite[Theorem~2]{BaMa-91}, which, if adapted to our situation, allows to prove easily the following version of Theorem~\ref{th-CMZ}.

\begin{theorem}\label{th-BM}
Let $\Omega \subseteq X$ be an open bounded set with $\overline{\Omega} \subseteq \text{\rm dom}\,M_{0}$.
Assume that there is no $u\in\partial\Omega$ such that $u'=f_{0}(u)$.
Then
\begin{equation*}
D_{L}(L-M_{0},\Omega) = (-1)^{d} \, \text{\rm deg}_{B}(f_{0},\Omega\cap\mathbb{R}^{d},0).
\end{equation*}
\end{theorem}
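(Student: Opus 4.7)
The plan is to reduce Theorem~\ref{th-BM} to the ``global'' autonomous case of Theorem~\ref{th-CMZ} by means of an extension-and-excision argument. The key observation is that the standing hypothesis $\overline{\Omega}\subseteq\text{\rm dom}\,M_{0}$ ensures that the compact set
\begin{equation*}
K:=\bigl{\{}u(t)\colon u\in\overline{\Omega},\ t\in\mathopen{[}0,T\mathclose{]}\bigr{\}}
\end{equation*}
(the image of $\overline{\Omega}\times\mathopen{[}0,T\mathclose{]}$ under the continuous evaluation map) is contained in the open set $\mathcal{A}_{0}$. By local compactness of $\mathbb{R}^{d}$, we can pick an open bounded set $V$ with $K\subseteq V\subseteq \overline{V}\subseteq\mathcal{A}_{0}$.

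First, I would apply Tietze's extension theorem componentwise to the continuous map $f_{0}|_{\overline{V}}$ to obtain a continuous $\tilde{f}_{0}\colon\mathbb{R}^{d}\to\mathbb{R}^{d}$ with $\tilde{f}_{0}\equiv f_{0}$ on $\overline{V}$. Let $\tilde{M}_{0}\colon X\to Z$ be the associated Nemytskii operator, now defined on the whole space $X$. I then want to show three agreements on $\overline{\Omega}$: first, $\tilde{M}_{0}u=M_{0}u$ for every $u\in\overline{\Omega}$, since the graph of such a $u$ lies in $K\subseteq\overline{V}$; second, every $u\in\partial\Omega$ satisfying $u'=\tilde{f}_{0}(u)$ also satisfies $u'=f_{0}(u)$, so the hypothesis transfers and there is no $u\in\partial\Omega$ with $u'=\tilde{f}_{0}(u)$; third, any constant $c\in\Omega\cap\mathbb{R}^{d}$ lies in $K\subseteq\overline{V}$, so that $\tilde{f}_{0}(c)=f_{0}(c)$ on $\overline{\Omega\cap\mathbb{R}^{d}}$.

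Combining these three agreements, one obtains on the one hand $D_{L}(L-M_{0},\Omega)=D_{L}(L-\tilde{M}_{0},\Omega)$, directly from the definition of the coincidence degree $D_{L}(L-M_{0},\Omega)=\text{\rm deg}_{LS}(Id_{X}-\Phi,\Omega,0)$ through the operator $\Phi$ defined in \eqref{eq-Phi}, which depends on $M_{0}$ only through its restriction to $\overline{\Omega}$; on the other hand $\text{\rm deg}_{B}(\tilde{f}_{0},\Omega\cap\mathbb{R}^{d},0)=\text{\rm deg}_{B}(f_{0},\Omega\cap\mathbb{R}^{d},0)$, by the very definition of Brouwer degree. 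Finally, since $\tilde{f}_{0}$ is defined on all of $\mathbb{R}^{d}$ and has no $T$-periodic solutions on $\partial\Omega$, Theorem~\ref{th-CMZ} applies directly and yields
\begin{equation*}
D_{L}(L-\tilde{M}_{0},\Omega)=(-1)^{d}\,\text{\rm deg}_{B}(\tilde{f}_{0},\Omega\cap\mathbb{R}^{d},0),
\end{equation*}
from which the desired formula follows at once.

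The only genuine subtlety, which I would not call a real obstacle, is the verification that the non-existence of solutions on $\partial\Omega$ transfers from $f_{0}$ to its extension $\tilde{f}_{0}$; this is immediate once one observes that $\partial\Omega\subseteq\overline{\Omega}\subseteq\text{\rm dom}\,M_{0}$, so that any hypothetical solution lies automatically in the region $\{u\colon u(\mathopen{[}0,T\mathclose{]})\subseteq\overline{V}\}$ on which $\tilde{f}_{0}$ and $f_{0}$ are identical. This is what makes the argument work without needing the $S^{1}$-equivariant degree machinery used in \cite{BaMa-91}: because the problem is purely local near $\overline{\Omega}$, a simple extension suffices to reduce to the situation of \cite{CaMaZa-92}.
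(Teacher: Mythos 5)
Your reduction of Theorem~\ref{th-BM} to Theorem~\ref{th-CMZ} by extending $f_{0}$ outside $\mathcal{A}_{0}$ is a genuinely different route from the paper's (which obtains the statement by adapting the ``local'' computation \cite[Theorem~2]{BaMa-91} rather than by appealing to the global Theorem~\ref{th-CMZ}), and the bookkeeping in the second half of your argument is fine; but the first step contains a genuine gap. The set $K=\bigl{\{}u(t)\colon u\in\overline{\Omega},\ t\in\mathopen{[}0,T\mathclose{]}\bigr{\}}$ is \emph{not} the continuous image of a compact set: $\overline{\Omega}$ is a closed bounded subset of the infinite-dimensional space $\mathcal{C}(\mathopen{[}0,T\mathclose{]},\mathbb{R}^{d})$ and is therefore not compact. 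Worse, the property you actually need, namely $\overline{K}\subseteq\mathcal{A}_{0}$ (so that a buffer set $V$ with $K\subseteq V\subseteq\overline{V}\subseteq\mathcal{A}_{0}$ exists), can fail even under the hypothesis $\overline{\Omega}\subseteq\text{\rm dom}\,M_{0}$. Take $d=1$, $\mathcal{A}_{0}=\mathopen{]}0,+\infty\mathclose{[}$, $T=1$, and let $u_{k}$ be the piecewise linear function equal to $1$ for $|t-\tfrac12|\geq\tfrac{1}{2k}$ and dipping linearly to $u_{k}(\tfrac12)=\tfrac1k$. No subsequence of $(u_{k})_{k}$ converges uniformly (a uniform limit would equal $1$ off $t=\tfrac12$ and $0$ at $t=\tfrac12$), so for $\Omega:=\bigcup_{k\geq2}B(u_{k},\tfrac{1}{3k})$ one checks that $\overline{\Omega}=\bigcup_{k\geq2}B[u_{k},\tfrac{1}{3k}]$, whose elements satisfy $\min_{t}v(t)\geq\tfrac{2}{3k}>0$; hence $\Omega$ is open and bounded with $\overline{\Omega}\subseteq\text{\rm dom}\,M_{0}$, yet $K\ni\tfrac1k$ for every $k$, so $0\in\overline{K}\setminus\mathcal{A}_{0}$ and no admissible $V$ exists. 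The Tietze extension from $\overline{V}$, and with it the whole reduction, breaks down at this point.

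The natural repair is to localize around the solution set rather than around $\overline{\Omega}$: if you first prove that $\mathcal{S}:=\bigl{\{}u\in\overline{\Omega}\colon u'=f_{0}(u)\bigr{\}}$ is a compact subset of $\Omega$, then $K':=\bigcup_{u\in\mathcal{S}}u(\mathopen{[}0,T\mathclose{]})$ \emph{is} a compact subset of $\mathcal{A}_{0}$; choosing $V$ with $K'\subseteq V\subseteq\overline{V}\subseteq\mathcal{A}_{0}$ and excising $\Omega$ down to $\Omega_{0}:=\{u\in\Omega\colon u(\mathopen{[}0,T\mathclose{]})\subseteq V\}$, your extension argument applies verbatim on $\Omega_{0}$, and one excises back on the Brouwer side because every zero of $f_{0}$ in $\Omega\cap\mathbb{R}^{d}$ is an equilibrium and hence lies in $\Omega_{0}$. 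But the compactness of $\mathcal{S}$ --- equivalently, an a priori confinement of all solution trajectories to a fixed compact subset of $\mathcal{A}_{0}$ --- is exactly the ingredient your write-up silently assumes and must be supplied; it is not a formality, and it is the reason the paper falls back on the local statement of \cite{BaMa-91} instead of on \cite{CaMaZa-92}.
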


\medskip

The next theorem is a generalization of the above results, dealing with an open (possibly unbounded) set $\Omega \subseteq X$.

\begin{theorem}\label{th-CMZ-BM}
Let $\Omega \subseteq \text{\rm dom}\,M_{0}$ be an open (possibly unbounded) set such that the set of the ($T$-periodic) solutions of \eqref{eq-g0} in $\Omega$ is a compact subset of $\Omega$.
Then
\begin{equation*}
D_{L}(L-M_{0},\Omega) = (-1)^{d} \, \text{\rm deg}_{B}(f_{0},\Omega\cap\mathbb{R}^{d},0).
\end{equation*}
\end{theorem}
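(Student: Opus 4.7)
The plan is to reduce to the bounded case already covered by Theorem~\ref{th-BM} via an excision argument. Set
\[
\mathcal{S} := \bigl\{u\in \Omega \cap \text{\rm dom}\,L \colon Lu = M_{0}u\bigr\},
\]
which is the full set of $T$-periodic solutions of \eqref{eq-A.1} lying in $\Omega$. By hypothesis $\mathcal{S}$ is a compact subset of $\Omega$, so by a standard application of normality we can pick a bounded open set $\mathcal{W}\subseteq X$ with
\[
\mathcal{S} \subseteq \mathcal{W} \subseteq \overline{\mathcal{W}} \subseteq \Omega \subseteq \text{\rm dom}\,M_{0}.
\]
Since every solution in $\Omega$ already lies inside $\mathcal{W}$, there is no solution on $\partial\mathcal{W}$, and $\mathcal{W}$ fits the hypotheses of Theorem~\ref{th-BM}.

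First I would invoke the excision property of the coincidence degree extended to locally compact operators (see the definition recalled in the introduction), which, since $\mathcal{S}\subseteq \mathcal{W}$ and there are no coincidence points in $\Omega\setminus\mathcal{W}$, yields
\[
D_{L}(L-M_{0},\Omega) = D_{L}(L-M_{0},\mathcal{W}).
\]
Applying Theorem~\ref{th-BM} to $\mathcal{W}$ gives
\[
D_{L}(L-M_{0},\mathcal{W}) = (-1)^{d}\,\text{\rm deg}_{B}(f_{0},\mathcal{W}\cap\mathbb{R}^{d},0).
\]

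To finish I would show that the Brouwer degree does not change when $\mathcal{W}$ is enlarged to $\Omega$. The zeros of $f_{0}$ in $\Omega\cap\mathbb{R}^{d}$ correspond exactly to constant $T$-periodic solutions $x(t)\equiv c$ of \eqref{eq-A.1} with $c\in\Omega\cap\mathbb{R}^{d}$; all such constants belong to $\mathcal{S}$ and hence to $\mathcal{W}$. Therefore
\[
f_{0}^{-1}(0)\cap(\Omega\cap\mathbb{R}^{d}) \subseteq \mathcal{W}\cap\mathbb{R}^{d},
\]
which is in particular compact, so both $\text{\rm deg}_{B}(f_{0},\Omega\cap\mathbb{R}^{d},0)$ and $\text{\rm deg}_{B}(f_{0},\mathcal{W}\cap\mathbb{R}^{d},0)$ are well-defined, and by the excision property of the Brouwer degree (applied on the open set $\Omega\cap\mathbb{R}^{d}\setminus(\mathcal{W}\cap\mathbb{R}^{d})$, which contains no zeros of $f_{0}$) the two coincide. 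Chaining the three equalities gives the claim.

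The main technical point is the first one: one has to be sure that, in the setting of \cite{Nu-85,Nu-93}, the coincidence degree for locally compact operators on arbitrary (not necessarily bounded) open sets satisfies the excision property that I am using; this is the essential content of the extended definition of $D_{L}$ recalled at the end of Section~\ref{section-1}, together with the fact that $\Phi_{M_{0}}$ in \eqref{eq-Phi} is locally compact on $\text{\rm dom}\,M_{0}$ and the coincidence set $\mathcal{S}$ is compact. Once this is granted, the remaining steps are routine consequences of the excision property of the Brouwer degree and a direct application of Theorem~\ref{th-BM}.
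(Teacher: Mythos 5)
Your proof is correct and follows essentially the same route as the paper's: both arguments excise down from $\Omega$ to a bounded open set $\Omega_{0}$ (your $\mathcal{W}$) containing the compact solution set with closure in $\Omega$, apply Theorem~\ref{th-BM} there, and then use Brouwer-degree excision to pass back to $\Omega\cap\mathbb{R}^{d}$. Your added observation that the zeros of $f_{0}$ in $\Omega\cap\mathbb{R}^{d}$ are exactly the constant solutions and hence lie in $\mathcal{W}$ is a useful explicit justification of the final step, which the paper states more tersely.
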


\begin{proof}
Let $\mathcal{K}$ be the set of the ($T$-periodic) solutions of \eqref{eq-g0}
in $\Omega$. We stress that $\mathcal{K}$ is a compact subset of $\Omega$ (by hypothesis).
Then, we can find an open bounded set $\Omega_{0}$ such that
$\mathcal{K} \subseteq \Omega_{0} \subseteq \text{\rm cl}(\Omega_{0}) \subseteq \Omega$
and so, by the excision,
$D_{L}(L-M_{0},\Omega) = D_{L}(L-M_{0},\Omega_{0})$.
Since there is no $u\in\partial\Omega_{0}$ such that $u'=f_{0}(u)$, we apply Theorem~\ref{th-BM} and get
$D_{L}(L-M_{0},\Omega_{0}) = (-1)^{d} \, \text{\rm deg}_{B}(f_{0},\Omega_{0}\cap\mathbb{R}^{d},0)$.
We conclude observing that $\text{\rm deg}_{B}(f_{0},\Omega_{0}\cap\mathbb{R}^{d},0) = \text{\rm deg}_{B}(f_{0},\Omega\cap\mathbb{R}^{d},0)$
(due to the excision property and the fact that $\mathcal{K} \subseteq \Omega_{0} \subseteq \Omega$).
\end{proof}

\bibliographystyle{elsart-num-sort}
\bibliography{Feltrin_Zanolin_biblio}

\bigskip
\begin{flushleft}

{\small{\it Preprint}}

{\small{\it October 2016}}

\end{flushleft}

\end{document}